\DeclareFontFamily{OMS}{rsfs}{\skewchar\font'60}
\DeclareFontShape{OMS}{rsfs}{m}{n}{<-5>rsfs5 <5-7>rsfs7 <7->rsfs10 }{}
\DeclareSymbolFont{rsfs}{OMS}{rsfs}{m}{n}
\DeclareSymbolFontAlphabet{\scr}{rsfs}
\newcommand{\sF}{\scr{F}}
\newcommand{\sG}{\scr{G}}
\newcommand{\sJ}{\scr{J}}
\newcommand{\sO}{\scr{O}}
\newcommand{\bC}{\mathbb{C}}
\newcommand{\bN}{\mathbb{N}}
\DeclareMathOperator{\Jet}{Jet}
\DeclareMathOperator{\codim}{codim}
\DeclareMathOperator{\Hamilton}{Hamilton}
\DeclareMathOperator{\Hom}{Hom}
\DeclareMathOperator{\Image}{Image}
\DeclareMathOperator{\Lie}{L}
\DeclareMathOperator{\Spec}{Spec}
\theoremstyle{plain}    
\newtheorem{thm}{Theorem}[section]
\newtheorem{defn}[thm]{Definition}
\newtheorem{setup}[thm]{Setup} 
\numberwithin{equation}{thm}
\numberwithin{figure}{section}
\theoremstyle{plain}    
\newtheorem{cor}[thm]{Corollary}
\newtheorem{lem}[thm]{Lemma}
\newtheorem{fact}[thm]{Fact}
\theoremstyle{plain}    
\newtheorem{prop}[thm]{Proposition}
\theoremstyle{remark}
\newtheorem{rem}[thm]{Remark}
\newtheorem{subclaim}[equation]{Claim} 
\newtheorem{notation}[thm]{Notation}
\newtheorem{example}[thm]{Example}
\definecolor{tomato}{RGB}{180,62,39}
\definecolor{forrest}{RGB}{81,133,49}
\definecolor{lighttomato}{RGB}{253,65,65}
\definecolor{lightforrest}{RGB}{145,237,87}
\definecolor{mygreen}{RGB}{40,104,69}
\definecolor{mygreen2}{RGB}{3,149,39}
\definecolor{darkolivegreen}{RGB}{102,118,75}
\definecolor{cranegreen}{RGB}{102,118,75}
\definecolor{mydarkblue}{RGB}{10,92,153}
\definecolor{myblue}{RGB}{57,222,186}
\definecolor{pinkish}{RGB}{213,83,222}
\definecolor{colD}{RGB}{213,83,222}
\definecolor{defb}{RGB}{213,83,222}
\definecolor{goldenrod}{RGB}{225,115,69}
\definecolor{mauve}{RGB}{224, 176, 255}
\definecolor{fuchsia}{RGB}{255, 0, 255}
\definecolor{lavender}{RGB}{230, 230, 250}
\definecolor{gold}{RGB}{255, 215, 0}
\definecolor{orange}{RGB}{255, 127, 0}
\definecolor{maroon}{RGB}{123, 17, 19}
\definecolor{brightmaroon}{RGB}{195, 33, 72}
\definecolor{richmaroon}{RGB}{176, 48, 96}
\definecolor{green}{RGB}{3,149,39}
\newcommand{\PreprintAndPublication}[2]{#1}
\date{\today}
\author{Clemens J\"order}
\address{Clemens J\"order, Mathematisches Institut, Albert-Ludwigs-Universitt
  Freiburg, Eckerstraße 1, 79104 Freiburg im Breisgau, Germany}
\email{\href{mailto:clemens.joerder@math.uni-freiburg.de}{clemens.joerder@math.uni-freiburg.de}}
\author{Stefan Kebekus}
\address{Stefan Kebekus, Mathematisches Institut, Albert-Ludwigs-Universitt
  Freiburg, Eckerstraße 1, 79104 Freiburg im Breisgau, Germany}
\email{\href{mailto:stefan.kebekus@math.uni-freiburg.de}{stefan.kebekus@math.uni-freiburg.de}}
\urladdr{\url{http://home.mathematik.uni-freiburg.de/kebekus}}
\thanks{Clemens Jörder and Stefan Kebekus were supported in part by the
  DFG-Forschergruppe 790 ``Classification of Algebraic Surfaces and Compact
  Complex Manifolds''.}
\title{Deformation along subsheaves, II}
\begin{document}

\begin{abstract}
  Let $f : Y \to X$ be the inclusion map of a compact reduced subspace of a
  complex manifold, and let $\sF \subseteq T_X$ be a subsheaf of the tangent
  bundle which is closed under the Lie bracket, but not necessarily a sheaf of
  $\sO_X$-algebras. This paper discusses criteria to guarantee that
  infinitesimal deformations of $f$ which are induced by $\sF$ lift to
  positive-dimensional deformations of $f$, where $f$ is deformed ``along the
  sheaf $\sF$''.

  In case where $X$ is complex-symplectic and $\sF$ the sheaf of locally
  Hamiltonian vector fields, this partially reproduces known results on
  unobstructedness of deformations of Lagrangian submanifolds. The proof is
  rather elementary and geometric, constructing higher-order liftings of a
  given infinitesimal deformation using flow maps of carefully crafted
  time-dependent vector fields.
\end{abstract}

\maketitle
\tableofcontents

\section{Introduction and main result}

\subsection{Introduction}

Let $f: Y \to X$ be the inclusion map of a compact reduced subspace of a
complex manifold. We aim to deform the morphism $f$, keeping the complex
spaces $X$ and $Y$ fixed. For this purpose let us fix a first-order
infinitesimal deformation of $f$, say $\sigma \in H^0\bigl(Y ,\, f^*T_X
\bigr)$ --- we refer to the earlier paper \cite[Sect.~1]{KKL10} for a
discussion of infinitesimal deformations, and for other notions used here. We
ask for conditions to guarantee that $\sigma$ is effective. In other words, we
ask for conditions that guarantee the existence of a disk $\Delta \subset
\mathbb C$, centered about $0$, and a family of morphisms,
$$
\begin{array}{rccc}
  F : &  \Delta \times Y & \to & X\\
  & (t,y) & \mapsto & F_t(y)
\end{array}
$$
such that $F_0 = f$ and such that the infinitesimal deformation induced by the
family, $\sigma_{F,0} := \frac{d}{d t}\bigr|_{t=0}F_t \in H^0\bigl(Y ,\,
f^*T_X \bigr)$, agrees with $\sigma$. In case when $Y$ is a manifold, the most
general result in this direction is due to Horikawa.

\begin{thm}[Horikawa's criterion,  \cite{MR0322209}]\label{thm:hori}
  If $H^1\bigl(Y ,\, f^*T_X \bigr)$ vanishes, then any first-order
  infinitesimal deformation of $f$ is effective. \qed
\end{thm}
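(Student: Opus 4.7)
The plan is to build $F$ by extending $f$ order by order along the infinitesimal thickenings $\Delta_n := \Spec \bC[t]/(t^{n+1})$, and then to promote the resulting formal family to a genuine holomorphic map on a small disk. Concretely, I would inductively construct morphisms $F_n : \Delta_n \times Y \to X$, compatible under restriction, such that $F_1$ corresponds to the pair $(f,\sigma)$ and such that each $F_{n+1}$ extends $F_n$.

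The inductive step is pure obstruction theory. Given $F_n$, cover $Y$ by open sets $\{U_i\}$ small enough that $f(U_i)$ lies in a coordinate chart of $X$; smoothness of $X$ then guarantees local extensions $F_{n+1,i} : \Delta_{n+1} \times U_i \to X$ of $F_n|_{\Delta_n \times U_i}$. On overlaps $U_{ij}$ the two extensions agree modulo $t^{n+1}$, so in local coordinates on $X$ their difference has the form $t^{n+1}\tau_{ij}$, where $\tau_{ij}$ is independent of the chart and defines a section in $H^0(U_{ij}, f^*T_X)$. The collection $\{\tau_{ij}\}$ is a \v{C}ech $1$-cocycle whose class $[\tau] \in H^1(Y, f^*T_X)$ is the precise obstruction to a global extension. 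By hypothesis this cohomology group vanishes, so $[\tau]$ is a coboundary and the local extensions can be corrected to glue into a global $F_{n+1}$.

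The main obstacle is convergence: the procedure above only produces a formal family $\hat F \in \varprojlim_n F_n$, whereas Theorem~\ref{thm:hori} demands an actual holomorphic family on a disk $\Delta \subset \bC$ of positive radius. To obtain this, one must make quantitative choices at each inductive step. My plan is to fix a Hermitian metric on $f^*T_X$, use the induced Hodge-theoretic splitting to pick each correction $\sigma_n \in H^0(Y, f^*T_X)$ in a chosen complement of the coboundaries, and then establish uniform estimates $\|\sigma_n\| \le C^n$ via a Kuranishi-style majorant argument. The power series $F_t = f + \sum_{n \ge 1} t^n \sigma_n$ then converges on a small disk and yields the desired family. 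Alternatively, one may short-circuit the estimates by invoking representability of the Douady space of morphisms $Y \to X$: its tangent space at $[f]$ is $H^0(Y, f^*T_X)$ and its obstruction space is $H^1(Y, f^*T_X)$, so vanishing of the latter forces smoothness at $[f]$ and produces a holomorphic arc realising $\sigma$ with no further work.
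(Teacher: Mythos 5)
The paper does not actually prove this statement --- it is quoted from Horikawa's article with a bare reference --- so there is no in-paper argument to compare against; judged on its own terms, your proposal is the standard proof and is essentially correct. The order-by-order obstruction step is right: two local extensions of $F_n$ over $U_{ij}$ differ by a well-defined section of $f^*T_X$ precisely because the set of $(n+1)$-st order extensions is a torsor under $f^*T_X$ (this is the affine-bundle structure of $\Jet^{n+1}(X)\to\Jet^n(X)$ recorded in Fact~\ref{fact:affinebundle}), the resulting $\{\tau_{ij}\}$ is a \v{C}ech cocycle, and vanishing of $H^1\bigl(Y,\,f^*T_X\bigr)$ lets you correct the local extensions to glue. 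Where you should be careful is the passage from the formal family to a convergent one. Your first option (a Hodge-theoretic choice of representatives plus majorant estimates $\|\sigma_n\|\le C^n$) is Horikawa's original route, but that estimate is where all the analytic work lives, and the expression $F_t=f+\sum_{n\ge 1}t^n\sigma_n$ only makes sense in local coordinates or after an embedding; as written this is a plan rather than a proof. Your second option is cleaner and is exactly how the present paper handles the same issue for its own main theorem in Section~\ref{sec:mainproof}: the compatible $F_n$ assemble into a formal arc $\Spec\,\bC[[t]]\to\Hom(Y,X)$ through $[f]$ with derivative $\sigma$, and Artin's approximation theorem \cite{ARTIN68} (equivalently, smoothness of the Douady space at $[f]$, which follows from unobstructedness) converts it into a genuine holomorphic arc with the same first-order term. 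If you take that route the argument is complete modulo the representability of $\Hom(Y,X)$ and the identification of its tangent and obstruction spaces, which you are entitled to cite.
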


Vanishing of the obstruction space $H^1\bigl(Y ,\, f^*T_X \bigr)$ is a
sufficient, but not a necessary condition for the existence of liftings. In
settings where the geometry of the target manifold $X$ is well-understood, it
is often possible to prove existence of liftings even if $H^1 \bigl( Y,\, f^*
T_X \bigr)$ is large.

\subsubsection*{Earlier results}

One situation where effectivity of infinitesimal deformations can sometimes be
shown has been studied in \cite{KKL10}. The authors of~\cite{KKL10} considered
a coherent subsheaf $\sF \subseteq T_X$ of $\sO_X$-modules, closed under
Lie-bracket, and an infinitesimal deformation induced by $\sF$,
$$
\sigma \in H^0 \left( Y ,\, \Image f^* \sF \to f^* T_X \right).
$$
It was shown in \cite[Thm.~1.5]{KKL10} that $\sigma$ lifts to a family of
morphisms if the space $H^1 \bigl( Y,\, \Image f^* \sF \to f^* T_X \bigr)$
vanishes. Examples of sheaves $\sF$ that appear this way include (singular)
foliations, logarithmic tangent sheaves, or the tensor product of $T_X$ with
the ideal sheaf of a subvariety.

\subsubsection*{Result of this paper}

This paper is concerned with the case where the infinitesimal deformation
$\sigma$ is induced by a sheaf $\sF \subseteq T_X$ which is closed under
Lie-bracket, but is not necessarily a sheaf of $\sO_X$-modules. Examples are
given by sheaves of Hamiltonian vector fields on complex-symplectic manifolds,
or more generally sheaves of vector fields whose flows stabilize a given
tensor. The main result, formulated in Theorems~\ref{thm:main} and
\ref{thm:mainaddon}, generalizes and improves on \cite[Thm.~1.5]{KKL10}. In
case where $X$ is complex-symplectic, and $Y \subset X$ is a Lagrangian
submanifold, this reproduces results of Ran, Voisin, Kawamata, and others.

\subsubsection*{Aim and scope}

Written for the IMPANGA Lecture Notes series, this paper aims at simplicity
and clarity of argument. It does not strive to present the shortest proofs or
most general results available. While everything said here can also be deduced
from the abstract machinery of deformation theory, we argue in a rather
elementary and geometric manner, constructing higher-order liftings of a given
infinitesimal deformation using flow maps of carefully crafted time-dependent
vector fields.

\subsection{Main result}

In order to formulate our result we start off with the necessary notation.

\begin{notation}
  If $X$ is any complex manifold, denote the sheaf of locally constant
  functions on $X$ by $\mathbb C_X \subseteq \sO_X$.
\end{notation}

Throughout the present paper, we will frequently consider subsheaves $\sF
\subseteq T_X$, such as the sheaf of Hamiltonian vector fields on a
complex-symplectic space, which are invariant under multiplication with
constants, but not necessarily under multiplication with arbitrary regular
functions. We call such $\sF$ a \emph{sheaf of $\mathbb C_X$-modules}. The
main results of this paper consider the following setup.

\begin{setup}\label{setup:main}
  Let $X$ be a complex manifold, and $Y \subseteq X$ a reduced complex
  subspace of $X$, with inclusion map $f: Y \to X$.  Further, let $\sF
  \subseteq T_X$ be a subsheaf of $\mathbb C_X$-modules, closed under Lie
  bracket.
\end{setup}

We aim to deform the inclusion map ``along the subsheaf $\sF$''. The following
two notions help to make this precise.

\begin{defn}[Infinitesimal deformations locally induced by $\sF$]\label{del:sidliF}
  In Setup~\ref{setup:main}, set
  $$
  \sF_Y := \Image \left( f^{-1} \sF \to f^* T_X \right) \subseteq f^*
  T_X.
  $$
  We call $\sF_Y$ the sheaf of \emph{infinitesimal deformations locally
    induced by $\sF$}. Sections $\sigma \in H^0 \bigl( Y,\, \sF_Y \bigr)$ are
  called \emph{infinitesimal deformations of $f$ which are locally induced by
    $\sF$}.
\end{defn}

\begin{defn}[Obstruction sheaves]\label{def:obstrShf}
  In Setup~\ref{setup:main}, we call a subsheaf $\sG \subseteq \sF_Y$ an
  \emph{obstruction sheaf for $\sF$} if the Lie-bracket of any two vector
  fields in $\sF$ which agree along the image $f(Y)$ induces a section of
  $\sG$.

  More precisely, $\sG \subseteq \sF_Y$ is called an obstruction sheaf for
  $\sF$ if for any open set $U \subseteq X$ and any two vector fields $\vec
  A_1, \vec A_2 \in \sF(U)$ satisfying $f^* \vec A_1 = f^* \vec A_2$, the
  preimage of the Lie-bracket is contained in $\sG$,
  $$
  f^*\, [\vec A_1, \vec A_2] \in \sG\bigl( f^{-1}(U) \bigr).
  $$
\end{defn}

\begin{rem}
  Obstruction sheaves are generally not unique.  Example~\ref{ex:foliation}
  discusses a situation where there is more than one sheaf satisfying the
  requirements of Definition~\ref{def:obstrShf}.
\end{rem}

The main result of our paper essentially says that Setup~\ref{setup:main}, any
infinitesimal deformation of $f$ which is locally induced by $\sF$ is
effective if there exists an obstruction sheaf whose first cohomology group
vanishes.

\begin{thm}[Existence of deformations]\label{thm:main}
  In Setup~\ref{setup:main}, assume that $Y$ is compact and assume that there
  exists an obstruction sheaf $\sG \subseteq \sF_Y$ for $\sF$ such that $H^1
  \bigl(Y,\, \sG \bigr) = 0$.  Then any infinitesimal deformation $\sigma \in
  H^0\bigl( Y,\, \sF_Y \bigr)$ locally induced by $\sF$ is effective.  In
  other words, there exists an open neighbourhood $\Delta$ of $0 \in \bC$, and
  a family of morphisms
  $$
  \begin{array}{rccc}
    F : & \Delta \times Y & \to & X\\
    & (t,y) & \mapsto & F_t(y)
  \end{array}
  $$
  such that $F_0 = f$ and such that the infinitesimal deformation induced by
  the family, $\sigma_{F,0} := \frac{d}{d t}\bigr|_{t=0}F_t \in H^0\bigl(Y ,\,
  f^*T_X \bigr)$, agrees with $\sigma$.
\end{thm}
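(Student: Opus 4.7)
The plan is to construct $F$ by iteratively patching together local flows of vector fields in $\sF$, correcting the discrepancies at each order in $t$ via the vanishing of $H^1(Y,\sG)$. Since $\sF$ is only a $\bC_X$-module, one cannot average local liftings using a partition of unity on $X$, as is done in \cite{KKL10}; the role of the obstruction sheaf $\sG$ is precisely to package the Lie-bracket obstructions that arise when one instead works entirely at the level of flow compositions.

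Using compactness of $Y$ and Definition~\ref{del:sidliF}, I begin by choosing a finite open cover $\{U_i\}$ of a neighbourhood of $f(Y) \subseteq X$ and vector fields $\vec A_i \in \sF(U_i)$ with $f^*\vec A_i = \sigma|_{V_i}$, where $V_i := f^{-1}(U_i)$. After shrinking, the holomorphic flow $\Phi^i_t$ of $\vec A_i$ is defined for $t$ in a common disk $\Delta \subseteq \bC$, and each local map $F^i_t := \Phi^i_t \circ f$ realises $\sigma|_{V_i}$ at first order. The main inductive step then reads as follows: assuming that the $F^i_t$ have been adjusted so as to agree on every overlap $V_{ij}$ modulo $t^{n+1}$, the leading discrepancy on $V_{ij}$, computed via a Baker--Campbell--Hausdorff-type expansion of $(\Phi^j_t)^{-1} \circ \Phi^i_t$, comes out as an iterated Lie bracket of vector fields in $\sF$ whose pullbacks to $Y$ coincide. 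By Definition~\ref{def:obstrShf} the pullback of this discrepancy lies in $\sG(V_{ij})$, and the matching to order $n$ forces the collection of discrepancies to be a \v{C}ech $1$-cocycle. The hypothesis $H^1(Y,\sG)=0$ writes this cocycle as a coboundary $\{\tau_j - \tau_i\}$ with $\tau_i \in \sG(V_i)$. Lifting each $\tau_i$ to a local section of $\sF$ near $f(V_i)$ --- possible by the very definition $\sF_Y = \Image(f^{-1}\sF \to f^*T_X)$ --- allows me to modify the $\vec A_i$, and hence $F^i_t$, so that agreement now holds modulo $t^{n+2}$.

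The principal difficulty is to keep every correction strictly inside the sheaf $\sF$ throughout the induction: because $\sF$ admits only constant-coefficient linear combinations and Lie brackets, every manipulation must be expressible in those terms, and the obstruction-sheaf condition is tailored precisely so that the leading discrepancy at each step lands in $\sG$, where the vanishing hypothesis applies. Once the recursion produces coherent $n$-th order approximations for every $n$, convergence to a genuine holomorphic family $F : \Delta' \times Y \to X$ on a possibly smaller disk $\Delta' \subseteq \Delta$ follows from standard flow-domain estimates on the compact set $f(Y)$, as in \cite{KKL10}; the conditions $F_0 = f$ and $\sigma_{F,0} = \sigma$ are then automatic from the construction.
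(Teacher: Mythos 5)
Your overall architecture --- local flows of fields in $\sF$, a \v{C}ech $1$-cocycle of order-$(n+1)$ discrepancies valued in $\sG$, killed by $H^1(Y,\sG)=0$, with corrections lifted back into $\sF$ --- is exactly the paper's (Propositions~\ref{prop:prop1}, \ref{prop:prop2} and \ref{prop:liftingproperty}). But the step where you assert that the leading discrepancy on an overlap lies in $\sG$ ``by Definition~\ref{def:obstrShf}'' is a genuine gap, and it is where essentially all the work of the paper sits. Definition~\ref{def:obstrShf} only covers the bracket of \emph{two} sections of $\sF$ that agree along $Y$. The BCH-type discrepancy at order $n+1$ is $\bigl(\Lie_{D(\vec A)}^{n}D(\vec B)\bigr)_0$ with $D(\vec A)=\frac{\partial}{\partial t}+\vec A$, so after expansion it contains not only iterated spatial brackets but also terms of the form $\Lie_{\vec F_1}\circ\cdots\circ\Lie_{\vec F_n}\bigl(\frac{\partial}{\partial t}\bigr)$; these are neither brackets of two agreeing sections of $\sF$ nor automatically in $\sG$. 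The paper controls them by introducing the \emph{admissibility} condition (Definition~\ref{defn:atdhvf}), proving it can be propagated through the induction together with the corrections (Proposition~\ref{prop:prop1}), and reducing the iterated brackets to the two-field case via the rewriting $[\vec P_0,\vec T_0]=[\vec P_0+\vec T_0,\vec T_0]$, where $\vec P_0$ vanishes along $Y$ (Lemmas~\ref{lem:hering} and~\ref{lem:prop2}). Your sentence about ``keeping every correction strictly inside $\sF$'' names this difficulty but supplies no mechanism that resolves it; without the admissibility bookkeeping the claim that the cocycle lies in $\sG$ is unproven.

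The final convergence step is also incorrect as stated. The coboundary corrections produced at each order by the vanishing of $H^1(Y,\sG)$ are completely uncontrolled in size, so the resulting formal power series family has no reason to converge, and flow-domain estimates on the compact set $f(Y)$ cannot produce a disk on which it does. Both this paper and \cite{KKL10} finish differently: the compatible tower of $n$-th order deformations is a formal curve $\Spec\,\bC[[t]]\to\Hom(Y,X)$ through $[f]$ with tangent vector $\sigma$, and Artin's approximation theorem \cite[Thm.~1.2]{ARTIN68} then supplies an actual holomorphic curve germ with the same first-order part. You need to replace your convergence claim with an argument of this kind.
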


Understanding that the formulation of Theorem~\ref{thm:main} may sound a
little technical, we end the present subsection with two examples. In
Section~\ref{ssec:defalF} we will then see that the family of morphisms whose
existence is guaranteed by Theorem~\ref{thm:main} can often be chosen in a way
that geometrically relates to the sheaf $\sF$.

\begin{example}[Complex-symplectic manifolds]\label{ex:symp}
  Let $(X, \omega)$ be a complex-symplectic manifold and $\sF \subset T_X$ the
  sheaf of Hamiltonian vector fields. Let $Y \subseteq X$ be any compact
  complex submanifold, with inclusion map $f: Y \to X$. We assume that $Y$ is
  Kähler and consider the sheaf $T_Y^\perp \subseteq f^* T_X$ of vector fields
  which are perpendicular to $Y$, with respect to the symplectic form
  $\omega$. In other words, $T_Y^\bot$ is the sheaf associated to the presheaf
  $$
  U\mapsto \left\{ \vec A \in (f^*T_X)(U) \,\,\bigl|\,\, (f^* \omega)(\vec A,
    df(\vec B)) = 0 \text{ for all } \vec B \in T_Y(U) \right\},
  $$
  where $U\subset Y$ runs over the open subsets of $Y$. We will show in
  Section~\ref{sec:examples} that $H^0 \bigl(Y,\, \sF_Y \bigr) = H^0
  \bigl(Y,\, f^* T_X \bigr)$, and that $T_Y^\perp$ is an obstruction sheaf in
  this setting. Theorem~\ref{thm:main} thus asserts that any infinitesimal
  deformation $\sigma \in H^0 \bigl( X,\, f^*T_X \bigr)$ always lifts to a
  holomorphic family $F$ of morphisms if the cohomology group $H^1\bigl( Y,\,
  T_Y^\perp \bigr)$ vanishes.
\end{example}

\begin{rem}[Deformations of Lagrangian submanifolds]
  In the setting of Example~\ref{ex:symp}, if $Y \subset X$ is a Lagrangian
  submanifold, then $T_Y^\perp = T_Y$. We obtain that vanishing of $H^1\bigl(
  Y,\, T_Y \bigr)$, the tangent space to the Kuranishi-family of deformations
  of $Y$, is the only obstruction to lifting a given infinitesimal
  deformation. This partially reproduces results of Ran, Kawamata and Voisin
  on the unobstructedness of deformations of Lagrangian subvarieties,
  cf.~\cite{RAN92MR1487238MR1487238MR1487238, voisin-lagrangian, MR1144434,
    MR1487238} and the references there.
\end{rem}

\begin{example}[Deformation along a foliation]\label{ex:foliation}
  Let $X$ be a complex manifold and $\sF \subseteq T_X$ a (regular) foliation,
  i.e., a sub-vectorbundle of $T_X$ which is closed under Lie bracket. Again,
  let $Y \subseteq X$ be any compact submanifold of $X$. Let $T \subset Y$ be
  the set of points where the foliation is tangent to $Y$,
  $$
  T := \{ y \in Y \,|\, \sF|_y \subseteq T_Y|_y \}.
  $$
  It is clear that $\sF_Y = \sF|_Y$. We will show in Section~\ref{ssec:exDefF}
  that any of the two sheaves
  $$
  \sG_1 := \sF_Y \quad\text{and}\quad \sG_2 := \sF_Y \otimes \sJ_T
  $$
  are obstruction sheaves in this setting. Thus, if either $H^1\bigl( Y,\,
  \sF_Y \bigr)$ or $H^1\bigl( Y,\, \sF_Y \otimes \sJ_T \bigr)$ vanishes, then
  any infinitesimal deformation $\sigma \in H^0 \bigl( Y,\, \sF|_Y \bigr)$
  lifts to a holomorphic family of morphisms.
\end{example}

\subsection{Deformations along $\sF$}
\label{ssec:defalF}

Although the sheaf $\sF$ appears in the assumptions of Theorem~\ref{thm:main},
its conclusion seems to disregard $\sF$ entirely, as the family of morphisms
obtained in Theorem~\ref{thm:main} need not be related to $\sF$ in any way.
However, in all the examples we have in mind, there is a way to construct a
family of morphisms $F: \Delta \times Y \to X$ that relates to $\sF$
geometrically.

\subsubsection{Notation concerning higher-order infinitesimal deformations}

For a precise formulation of this result, we need to discuss locally closed
subspaces of the Douady-space of all holomorphic maps $Y \to X$ which
parametrize deformations \emph{along $\sF$}. The following notions concerning
\emph{higher-order} infinitesimal deformations of $f$ will be used in the
definition.

\begin{defn}[\protect{Higher-order infinitesimal deformations, cf.~\cite[Def.~2.12]{KKL10}}]\label{def:hoid}
  Let $f: Y \to X$ be a morphism from a complex space $Y$ to a complex
  manifold $X$. An \emph{$n$-th order infinitesimal deformation of $f$} is a
  morphism
  $$
  f_n : \Spec\, \bC[t]/t^ {n+1} \times Y \to X
  $$
  whose restriction to $Y \cong \Spec\, \bC \times Y$ agrees with $f$.
\end{defn}

The universal property of the Douady-space immediately yields another,
equivalent, definition of higher-order infinitesimal deformations.

\begin{notation}[\protect{Douady-space of morphisms, cf.~\cite[Sect.~2]{MR1326625}}]
  Let $X$, $Y$ be complex spaces and assume that $Y$ is compact. We denote the
  Douady-space of morphisms from $Y$ to $X$ by $\Hom\bigl( Y,\, X\bigr)$.
\end{notation}

\begin{fact}[Higher-order deformations as morphisms to the Douady-space]
  In the setting of Definition~\ref{def:hoid}, assume additionally that $Y$ is
  compact. To give an $n$-th order infinitesimal deformation of $f$, it is
  then equivalent to give a morphism
  $$
  f_n : \Spec\, \bC[t]/t^ {n+1} \to \Hom\bigl(Y,\,X\bigr)
  $$
  whose closed point maps to the point $[f] \in \Hom\bigl(Y,\,X\bigr)$
  representing the morphism $f$. \qed
\end{fact}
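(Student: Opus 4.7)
The plan is to invoke the universal property of the Douady space $\Hom\bigl(Y,\,X\bigr)$, which by \cite[Sect.~2]{MR1326625} represents the functor of families of morphisms from $Y$ to $X$. Concretely, for any complex space $S$, compactness of $Y$ ensures the existence of a natural bijection between $\Hom\bigl(S,\,\Hom(Y,X)\bigr)$ and the set of morphisms $S \times Y \to X$, compatible with base change in $S$.

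First I would specialise this universal property to the Artinian base $S := \Spec\,\bC[t]/t^{n+1}$. This immediately yields a bijection between morphisms $f_n : \Spec\,\bC[t]/t^{n+1} \times Y \to X$ and morphisms $\tilde f_n : \Spec\,\bC[t]/t^{n+1} \to \Hom\bigl(Y,\,X\bigr)$.

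Next I would identify the two base-point conditions under this bijection. Applying naturality to the closed immersion $\iota : \Spec\,\bC \hookrightarrow \Spec\,\bC[t]/t^{n+1}$ of the unique closed point gives a commutative diagram that relates the restriction of $f_n$ to $\Spec\,\bC \times Y \cong Y$ with the composition $\tilde f_n \circ \iota$. Unwinding the universal property at $S = \Spec\,\bC$ (where it reduces to the tautological identification $\Hom\bigl(\Spec\,\bC,\,\Hom(Y,X)\bigr) \cong \Hom(Y,\,X)$), one sees that $f_n$ restricts to $f$ on the closed fibre if and only if $\tilde f_n$ sends the closed point to $[f] \in \Hom\bigl(Y,\,X\bigr)$.

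I do not expect any serious obstacle here: the statement is a formal consequence of representability of the Douady functor, and the only care required is to verify that the base-point normalisation is preserved under the bijection, which is precisely the naturality check described above. Compactness of $Y$ is used solely to ensure that $\Hom\bigl(Y,\,X\bigr)$ exists as a complex space.
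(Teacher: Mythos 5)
Your proposal is correct and matches the paper's own justification: the paper dismisses this Fact as an immediate consequence of the universal property of the Douady space, and your argument simply spells out that universal property for the Artinian base $\Spec\,\bC[t]/t^{n+1}$ together with the naturality check for the closed-point condition. No gap; this is the intended proof.
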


Alternatively, any $n$-th order deformation of $f$ can also be seen as a
section in the pull-back of the $n$-th order jet-bundle of $X$. Jet bundles
and their fundamental properties are reviewed in Section~\ref{ssec:jetbundles}
below.

\begin{fact}[\protect{Higher-order deformations as sections in $\Jet^n$, \cite[Prop.~2.13]{KKL10}}]\label{fact:hoddJ}
  In the setting of Definition~\ref{def:hoid}, to give an $n$-th order
  infinitesimal deformation of $f$, it is equivalent to give a section $f_n :
  Y \to f^* \Jet^n(X)$. \qed
\end{fact}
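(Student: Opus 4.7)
The plan is to deduce this statement from the universal property of the jet bundle, which is to be recalled in Section~\ref{ssec:jetbundles}. In brief, $\Jet^n(X)$ represents the functor sending a complex space $Z$ equipped with a morphism $h : Z \to X$ to the set of extensions $\tilde h : \Spec\, \bC[t]/t^{n+1} \times Z \to X$ of $h$; equivalently, such extensions are in natural bijection with sections $Z \to h^{*} \Jet^n(X)$ of the pulled-back bundle. Specializing this universal property to $Z = Y$ and $h = f$ yields the bijection asserted in the Fact directly, because an $n$-th order infinitesimal deformation is by Definition~\ref{def:hoid} precisely an extension of $f$.

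The substantive content is thus the universal property of the jet bundle itself, which I would verify by a local computation. Pick a coordinate chart $V \subseteq X$ with holomorphic coordinates $z_1, \ldots, z_N$. Over $V$ the jet bundle is trivialized as $V \times \bC^{nN}$, with the universal jet given in these coordinates by
$$
  t \,\longmapsto\, \bigl( z + a^{(1)}\, t + a^{(2)}\, t^2 + \cdots + a^{(n)}\, t^n \bigr).
$$
If $h : Z \to X$ factors through $V$, an extension $\tilde h$ is determined by the images $h_i(t) = (z_i \circ h) + h_i^{(1)} t + \cdots + h_i^{(n)} t^n$ of the coordinates. Reading off the coefficients $h_i^{(1)}, \ldots, h_i^{(n)}$ gives a section of $h^{*} \Jet^n(V)$, and this assignment is clearly bijective. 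Compatibility with change of coordinates on $X$ reduces to the Taylor formula, so the local bijections glue to a global one.

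The main obstacle in carrying this out is ensuring that the argument survives the fact that $Y$ may be singular (it is merely assumed reduced). However, since $Y$ is reduced, a morphism out of $Y \times \Spec\, \bC[t]/t^{n+1}$ is determined by its action on the structure sheaf, and the local polynomial description above depends only on this algebraic datum; no smoothness of $Y$ is needed. Once the universal property is set up in Section~\ref{ssec:jetbundles}, the Fact follows in one line, and I would simply refer to \cite[Prop.~2.13]{KKL10} for the fully written-out verification.
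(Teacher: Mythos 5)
Your argument is correct and is essentially the intended one: since the paper defines $\Jet^n(X)$ as the Douady space $\Hom\bigl(\Spec\,\bC[t]/t^{n+1},\,X\bigr)$, the universal property you invoke is available by definition, and the Fact follows at once by applying it to $Z=Y$, $h=f$ (this is also how the cited \cite[Prop.~2.13]{KKL10} proceeds); your local coordinate verification is therefore a redundant but harmless consistency check. The only quibble is your appeal to reducedness of $Y$, which is a red herring --- the identification works for an arbitrary complex space $Z$ because a morphism to a chart $V\subseteq\bC^N$ is by definition an $N$-tuple of sections of the structure sheaf, reduced or not.
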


\subsubsection{Spaces of deformations along $\sF$}

Sections in $\Jet^n(X)$, which appear in the description of higher-order
deformations given in Fact~\ref{fact:hoddJ}, can be constructed using flows of
(time-dependent) vector fields on $X$. We will recall in
Section~\ref{ssec:tdinducedjets} that if $U \subseteq X$ is any open set, if
$\vec A$ is any time-dependent vector field on $U$, and $n$ any number, then
the flow of $\vec A$ induces a section $\tau^n_{\vec A} : U \to \Jet^n(U)$. We
call higher-order infinitesimal deformations of $f$ to be \emph{induced by
  time-dependent vector fields in $\sF$} if they locally arise in this way.

\begin{defn}[Deformations induced by time-dependent vector fields in $\sF$]\label{defn:infdefoindbyf}
  In Setup~\ref{setup:main}, an $n$-th order deformation $f_n : Y \to f^*
  \Jet^n(X)$ of $f$ is said to be \emph{locally induced by time-dependent
    vector fields in $\sF$}, if there exists a cover of $Y$ by open subsets of
  $X$, say $Y \subseteq \cup_{\alpha \in A} U_\alpha$, and for any $a \in A$
  time-dependent vector fields of the following form,
  $$
  \vec A_a = \sum_{i=0}^n t^i \vec A_{a,i} \quad \text{where}\quad
  \vec A_{a,i} \in \sF(U_\alpha)
  $$
  such that $f_n|_{U_a\cap Y} = \tau^n_{\vec A_a}|_{U_a\cap Y}$, for all $a
  \in A$.
\end{defn}

\begin{defn}[Spaces of deformations along $\sF$]\label{defn:spaceofdefo}
  In Setup~\ref{setup:main}, assume additionally that $Y$ is compact. A
  locally closed analytic subspace $H\subset\Hom(Y,X)$ containing $[f]$ is
  called \emph{space of deformations along $\sF$}, if for any infinitesimal
  deformation $f_n$ of $f$ that is locally induced by time-dependent vector
  fields in $\sF$, the corresponding morphism $\Spec\,\bC[t]/t^{n+1} \to
  \Hom(Y,X)$ factors through $H$.
\end{defn}

\begin{example}[Complex-symplectic manifolds]
  In the setting of Example~\ref{ex:symp}, where $(X, \omega)$ is a
  complex-symplectic manifold, set
  $$
  H := \{ g \in \Hom(Y,X) \,|\, g^*\omega = f^*\omega\} \subseteq
  \Hom(Y,X).
  $$
  Since (time-dependent) Hamiltonian vector fields preserve the symplectic
  form $\omega$, it is clear that $H$ is a space of deformations along $\sF$.
\end{example}

\begin{example}[Foliated manifolds]
  In the setting of Example~\ref{ex:foliation}, where $\sF \subseteq T_X$ is a
  foliation, the existence of a space of deformations along $\sF$ with very
  good properties has been shown in \cite[Cor.~5.6]{KKL10}.
\end{example}

\subsubsection{Existence of deformations along $\sF$}

In cases where a space of deformations along $\sF$ exists, the deformation
family constructed in Theorem~\ref{thm:main} can be chosen to factor via that
space. This complements and strengthens Theorem~\ref{thm:main} in our special
situation.

\begin{thm}[Existence of deformations along $\sF$, strenghtening of Theorem~\ref{thm:main}]\label{thm:mainaddon}
  In the setting of Theorem~\ref{thm:main}, assume in addition that there
  exists a space $H \subseteq \Hom \bigl(Y,\,X \bigr)$ of deformations along
  $\sF$. Then there exists a family $F : \Delta \times Y \to X$ such that $F$
  satisfies all properties stated in Theorem~\ref{thm:main}, and such that the
  associated map $F : \Delta \to \Hom\bigl(Y,\,X\bigr)$ factors through $H$.
\end{thm}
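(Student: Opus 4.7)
The strategy is to observe that the family $F$ produced in the proof of Theorem~\ref{thm:main} is built from flows of time-dependent vector fields with coefficients in $\sF$, so that every finite-order truncation of $F$ is automatically locally induced by time-dependent vector fields in $\sF$ in the sense of Definition~\ref{defn:infdefoindbyf}. Once this is established, the defining property of $H$ forces each finite-order jet of $F : \Delta \to \Hom(Y,X)$ to factor through $H$, and analyticity of $H$ promotes this to a genuine factorization.

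More precisely, the first step is to inspect the construction of $F$ in Theorem~\ref{thm:main}. There one proceeds by induction on $n$: given an $n$-th order lifting $f_n$ of $\sigma$, one realizes $f_n$ locally on an open cover $Y \subseteq \bigcup_{a\in A} U_a$ as the section $\tau^n_{\vec A_a}$ associated to a time-dependent vector field $\vec A_a = \sum_{i=0}^n t^i \vec A_{a,i}$ with $\vec A_{a,i} \in \sF(U_a)$, and then produces the next-order lifting by modifying these $\vec A_a$ by terms of higher order in $t$ whose coefficients again lie in $\sF$. In particular, every $n$-th order truncation of the resulting convergent family $F$ is locally induced by time-dependent vector fields in $\sF$.

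By Definition~\ref{defn:spaceofdefo}, it follows that for every $n \geq 0$ the morphism $\Spec\,\bC[t]/t^{n+1} \to \Hom(Y,X)$ corresponding to the $n$-th order truncation of $F$ factors through $H$. Equivalently, the formal expansion of the associated map $F : \Delta \to \Hom(Y,X)$ at $0$ lies in the formal completion of $H$ at $[f]$. To upgrade this to an analytic factorization, shrink $\Hom(Y,X)$ to a small neighbourhood $V$ of $[f]$ on which $H \cap V$ is cut out by finitely many holomorphic functions $g_1, \dots, g_k$. Each composition $g_i \circ F$ is then a holomorphic function on $F^{-1}(V) \subseteq \Delta$ whose Taylor series at $0$ vanishes to infinite order, hence vanishes on some smaller disk $\Delta' \subseteq \Delta$ around $0$. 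Replacing $\Delta$ by $\Delta'$ yields a family with all the properties required by Theorem~\ref{thm:main} and factoring through $H$.

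The main obstacle lies entirely in the first step: one has to verify that the construction used to prove Theorem~\ref{thm:main} can be organised so that every $n$-th order intermediate object $f_n$ really is locally a flow-induced section $\tau^n_{\vec A_a}$ with $\vec A_a$ polynomial in $t$ and coefficients in $\sF$, as opposed to merely a section of $\sF_Y$ at the level of first-order data. Once this bookkeeping is in place, the remainder of the argument is a formal consequence of the definition of $H$ and of the general principle that a holomorphic map whose Taylor series at a point lies in an analytic ideal must, on some neighbourhood, factor through the corresponding subspace.
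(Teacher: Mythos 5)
Your key observation --- that each finite-order lifting $\tau^n$ in the tower constructed for Theorem~\ref{thm:main} is locally of the form $\tau^n_{\vec A_a}$ for time-dependent vector fields $\vec A_a$ polynomial in $t$ with coefficients in $\sF$, so that by Definition~\ref{defn:spaceofdefo} every truncation $\Spec\,\bC[t]/t^{n+1}\to\Hom(Y,X)$ factors through $H$ --- is correct and is exactly the paper's point. The bookkeeping you flag as the ``main obstacle'' is precisely what the notion of admissible higher-order deformations (Definition~\ref{defn:admissiblesection}) and Proposition~\ref{prop:liftingproperty} are designed to handle: the inductive lift is always realized locally by $(n+1)$-admissible vector fields, which are in particular time-dependent vector fields in $\sF$. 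Hence the \emph{formal} curve $\Spec\,\bC[[t]]\to\Hom(Y,X)$ does factor through $H$.

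The gap is in your passage from formal to convergent. You apply the ``Taylor series vanishes to infinite order'' argument to the holomorphic family $F$ produced in Theorem~\ref{thm:main}. But that $F$ is obtained from the formal curve by Artin's approximation theorem, which only guarantees agreement with the formal curve up to a prescribed \emph{finite} order; the higher-order jets of the convergent $F$ need not be locally induced by vector fields in $\sF$, and there is no reason that $g_i\circ F$ should vanish to infinite order at $0$. As stated, this step fails. The repair --- and the paper's actual argument --- is to use the factorization of the formal curve through $H$ first, and then apply Artin's theorem to the analytic equations cutting out $H$ (rather than to $\Hom(Y,X)$), obtaining a convergent arc $\Delta\to H$ whose first-order term is $\sigma$; this arc is the desired family.
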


\subsection{Outline of this paper, acknowledgements}

\subsubsection{Outline}

Section~\ref{sec:jets} summarizes fundamental facts concerning jet bundles
associated with a complex manifold that will be needed in the
sequel. Subsections~\ref{ssec:jetbundles} and~\ref{ssec:tdvf} review the
notions of jet bundles and time-dependent vector fields, respectively. The
sections in the jet bundles arising from flow maps associated with
time-dependent vector fields are discussed in the subsequent
subsection~\ref{ssec:tdinducedjets}.

Section~\ref{sec:atdhvf} is the technical core of the paper. In
Setup~\ref{setup:main} the choice of an obstruction sheaf leads to the notion
of \emph{admissible} time-dependent vector fields on $X$. The rather technical
definition of these time-dependent vector fields is justified by the
properties of the jets induced by them, as it is formulated in
Subsection~\ref{ssec:jibatdhvf} and proven in the remainder of
Section~\ref{sec:atdhvf}.

The local definition and properties of admissible time-dependent vector fields
and the induced jets are globalized in Section~\ref{ssec:higherorderadm},
yielding the notion of admissible higher-order infinitesimal deformations of
the inclusion $f:Y\to X$.  This leads us to a lifting criterion for
infinitesimal higher-order deformations of $f$ based on the cohomology
vanishing assumption that appears in the statement of Theorem~\ref{thm:main}.

Once the result in Section~\ref{ssec:higherorderadm} is established, the
actual proof of Theorem~\ref{thm:main} and Theorem~\ref{thm:mainaddon} is a
short argument that is outlined in Section~\ref{sec:mainproof}.

The paper concludes with a detailed review of the two examples mentioned so
far, namely the deformation on complex-symplectic manifolds and on foliated
manifolds in Sections~\ref{sec:examples} and~\ref{ssec:exDefF}, respectively.

\subsubsection{Acknowledgements}

A first version of the main results appeared in the diploma thesis of Clemens
Jörder, \cite{Joerder10}, supervised by Stefan Kebekus.  Work on the project
was initiated by discussions between Stefan Kebekus and Jarosław A.~Wiśniewski
that took place during the 2009 MSRI program in algebraic geometry. Both
authors would also like to thank Jun-Muk Hwang for numerous discussions on the
subject.

This paper was written as a contribution for the proceedings of the 2010
IMPANGA summer school. The authors thank the organizers of that event.

\section{Jet theory}
\label{sec:jets}

The proof of Theorem~\ref{thm:main} uses the convenient language of jet
bundles. Sections~\ref{ssec:jetbundles} and~\ref{ssec:tdvf} summarize basic
facts and definitions about jet bundles and time-dependent vector fields,
respectively.  Section~\ref{ssec:tdinducedjets} contains an important formula
concerning the jets induced by flow maps of time-dependent vector fields. A
more detailed introduction to jets is found in \cite[Sect.~2]{KKL10} and the
references quoted there.

\subsection{Jet bundles}\label{ssec:jetbundles}

Jet bundles generalize the notion of tangent bundles. If $X$ is any complex
manifold, an $n$-th order jet on $X$ is an equivalence class of curve germs,
where two germs are considered equivalent if they agree to $n$-th order. A
precise definition is given as follows.

\begin{defn}[Jets on complex manifolds]\label{defn:jetsonmanifold}
  Let $X$ be any complex manifold and $x \in X$ a point. An \emph{$n$-th order
    jet} at $x\in X$ is a morphism
  $$
  \sigma: \Spec\, \bC[t]/t^{n+1} \to X
  $$
  of complex spaces which maps the closed point of $\Spec\, \bC[t]/t^{n+1}$ to
  $x$.
\end{defn}

\begin{defn}[Jet bundle]
  If $X$ is a complex manifold, define the \emph{$n$-th order jet bundle of
    $X$} as
  $$
  \Jet^n(X) := \Hom \bigl( \Spec\, \bC[t]/t^{n+1},\, X \bigr).
  $$
  As a set, $\Jet^n(X)$ equals the set of $n$-th order jets on $X$.
\end{defn}

Recall from Fact~\ref{fact:hoddJ} that higher-order infinitesimal deformations
$f_n : \Spec\, \bC[t]/t^{n+1} \times Y \to X$ of $f$ can be considered as
$X$-morphisms $f_n:Y \to \Jet^n(X)$. A detailed description of the structure
of jet bundles is therefore important.

\begin{fact}[\protect{Affine bundle structure, cf.~\cite[Fact~2.8.3]{KKL10}}]\label{fact:affinebundle}
  Let $X$ be a manifold. Then the following facts hold true for any natural
  number $n\geq 0$:
  \begin{enumerate}
  \item The complex space $\Jet^n(X)$ is a manifold. The obvious forgetful
    morphism $\pi_{n,m} : \Jet^n(X) \to \Jet^m(X)$ is holomorphic for all $n
    \geq m \geq 0$. There are natural isomorphisms $\Jet^0(X) \cong X$ and
    $\Jet^1(X) \cong T_X$.
    
  \item\label{it:affinebundle} The morphism $\pi_{n+1,n} : \Jet^{n+1}(X) \to
    \Jet^n(X)$ has the structure of an affine bundle. The associated vector
    bundle of translations on $\Jet^n(X)$ is the pullback of the tangent
    bundle. \qed
  \end{enumerate}
\end{fact}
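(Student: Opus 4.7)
The plan is to reduce both assertions to an explicit coordinate calculation and then reinterpret the relevant pieces ring-theoretically so that the resulting identifications are manifestly independent of the choices made. Fix a point $x \in X$ and local coordinates $z_1, \dots, z_d$ centered at $x$. A jet $\sigma : \Spec\, \bC[t]/t^{n+1} \to X$ with $\sigma(0)=x$ is determined uniquely by the $d$ truncated power series $z_i \circ \sigma = \sum_{j=1}^{n} a_{i,j}\, t^j \in \bC[t]/t^{n+1}$; consequently $\Jet^n(X)$ is locally modelled on an open subset of $\bC^d \times \bC^{dn}$, with the first factor recording the basepoint $\pi_{n,0}(\sigma) \in X$.

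For the first assertion I would verify that a holomorphic change of coordinates on $X$ induces a polynomial, hence biholomorphic, transformation of the coefficients $(a_{i,j})$; this simultaneously equips $\Jet^n(X)$ with the structure of a complex manifold and shows that the forgetful morphism $\pi_{n,m}$, which in these charts merely truncates the coefficients $a_{i,m+1}, \dots, a_{i,n}$, is holomorphic. The identification $\Jet^0(X) \cong X$ is tautological, since a $0$-jet is just a closed point of $X$; the identification $\Jet^1(X) \cong T_X$ is the standard description of a tangent vector either as a derivation $\sO_{X,x} \to \bC$ or as an equivalence class of curves through $x$ with the same first derivative.

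For part (\ref{it:affinebundle}) I would argue intrinsically. An $(n{+}1)$-jet corresponds to a $\bC$-algebra homomorphism $\sigma^\sharp_{n+1} : \sO_{X,x} \to \bC[t]/t^{n+2}$, and two lifts of a fixed $\sigma_n$ agree modulo the square-zero ideal $I := (t^{n+1})/(t^{n+2}) \cong \bC$. The elementary observation that the difference of two ring homomorphisms agreeing modulo a square-zero ideal is a derivation into that ideal then shows that $(\sigma'_{n+1})^\sharp - \sigma^\sharp_{n+1}$ is a $\bC$-linear derivation $\sO_{X,x} \to \bC$, that is, an element of $T_{X,x}$. Conversely, adding $t^{n+1}$ times any such derivation to $\sigma^\sharp_{n+1}$ produces another valid lift, and this action of $T_{X,x}$ on the fiber $\pi_{n+1,n}^{-1}(\sigma_n)$ is simply transitive.

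The main obstacle, and the only step that is not purely bookkeeping, is to check that the derivation above is really well-defined and transforms correctly as $x$ and $\sigma_n$ vary, so that the pointwise identifications glue to a fiberwise action of the pullback bundle $\pi_{n,0}^*\, T_X$ on $\Jet^{n+1}(X)$ over $\Jet^n(X)$. Once that functoriality is established, the coordinate trivializations from the first paragraph provide local trivializations of $\pi_{n+1,n}$ as an affine bundle with associated vector bundle $\pi_{n,0}^*\, T_X$, which is exactly the content of (\ref{it:affinebundle}).
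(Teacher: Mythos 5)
Your argument is correct and is the standard proof of this statement; the paper itself offers no proof, stating the result as a known Fact imported from \cite[Fact~2.8.3]{KKL10}, so there is nothing to contrast it with. Both the coordinate description of $\Jet^n(X)$ by truncated power-series coefficients and the identification of the fibers of $\pi_{n+1,n}$ as torsors under $T_X|_x$ via the square-zero/derivation argument are exactly the expected route, and the one point you flag as nontrivial (that the pointwise identifications glue holomorphically) is indeed the only thing requiring a check, which your coordinate charts supply.
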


\begin{notation}\label{not:affinebundle}
  In the setting of Fact~\ref{fact:affinebundle}, if $\sigma, \tau \in
  \Jet^{n+1}(X)$ are two jets whose $n$-th order parts agree,
  $\pi_{n+1,n}(\sigma) = \pi_{n+1,n}(\tau)$, the affine bundle structure
  mentioned in Item~\ref{it:affinebundle} allows to express the difference
  between $\sigma$ and $\tau$ as a tangent vector $\vec v \in T_X|_x$. In this
  context, we write $\vec v = \sigma - \tau$.
\end{notation}

The following descriptions of jets is immediate from the universal property of
the $\Hom$ space.

\begin{fact}[Jet bundles in deformation theory]\label{fact:jetsanddef}
  Let $f : Y \to X$ be a holomorphic map between a compact complex space $Y$
  and a complex manifold $X$. To give an $n$-th order jet at $[f] \in
  \Hom(Y,X)$, it is equivalent to give a holomorphic section $\tau^n : Y \to
  f^* \Jet^n(X) := \Jet^n(X) \times_X Y$. \qed
\end{fact}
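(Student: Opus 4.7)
The plan is to read off the equivalence directly from the universal properties defining $\Hom(Y,X)$ and $\Jet^n(X)$: the statement is essentially an instance of \emph{currying} (Hom spaces commute with products), combined with the universal property of the fibered product, and so the proof should be a short formal computation composed of three bijections.

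First, I would unpack an $n$-th order jet at $[f]$. By Definition~\ref{defn:jetsonmanifold} applied to the complex space $\Hom(Y,X)$, this is a morphism $\phi: \Spec\,\bC[t]/t^{n+1} \to \Hom(Y,X)$ whose closed point maps to $[f]$. By the defining universal property of the Douady space $\Hom(Y,X)$, and using compactness of $Y$, such $\phi$ correspond bijectively to morphisms $F: \Spec\,\bC[t]/t^{n+1}\times Y \to X$ whose restriction to $\{0\}\times Y \cong Y$ equals $f$; these are precisely the $n$-th order infinitesimal deformations of $f$ in the sense of Definition~\ref{def:hoid}. Next, I would apply the defining property of the jet bundle to the other factor: since $\Jet^n(X) = \Hom\bigl(\Spec\,\bC[t]/t^{n+1},\,X\bigr)$, there is a natural bijection between such $F$ and morphisms $\psi: Y \to \Jet^n(X)$, and the boundary condition ``$F$ agrees with $f$ at $t=0$'' translates into $\pi_{n,0}\circ\psi = f$, where $\pi_{n,0}: \Jet^n(X)\to \Jet^0(X) = X$ is the natural projection from Fact~\ref{fact:affinebundle}. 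Finally, the universal property of the fibered product identifies such $\psi$ with sections $\tau^n: Y \to f^*\Jet^n(X) = \Jet^n(X)\times_X Y$ of the projection to $Y$. Chaining these three bijections yields the asserted equivalence.

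The only step that is not pure diagram-chasing is the second one: I must know that the currying bijection lifts from sets to morphisms of complex spaces, i.e.\ that $\Jet^n(X)$ actually represents the functor $Z \mapsto \{\,F: Z\times\Spec\,\bC[t]/t^{n+1}\to X\,\}$ on the category of complex spaces. This is the main (and really only) obstacle. Representability can be verified locally on $X$, where it reduces to an explicit chart-wise description of jets via truncated Taylor expansions; since this is already built into the foundational treatment of jet bundles recalled in \cite[Sect.~2]{KKL10}, I would simply invoke it rather than reprove it.
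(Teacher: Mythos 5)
Your proposal is correct and matches the paper's intent exactly: the paper states this Fact with no written proof, remarking only that it is ``immediate from the universal property of the $\Hom$ space,'' and your three-step chain of bijections (universal property of $\Hom(Y,X)$, currying via the universal property of $\Jet^n(X)=\Hom(\Spec\,\bC[t]/t^{n+1},X)$, and the universal property of the fibered product) is precisely the argument being alluded to. The representability issue you flag is likewise settled by the same Douady-space universal property, applied with the compact factor $\Spec\,\bC[t]/t^{n+1}$, so there is no gap.
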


\subsection{Time-dependent vector fields}\label{ssec:tdvf} 

We follow the standard approach familiar from the theory of ordinary
differential equations and define a time-dependent vector field on a complex
manifold $X$ as a vector field on the product of $X$ and a ``time axis''.

\newcommand{\td}{\vec A} \newcommand{\sectd}{\vec B}

\subsubsection{Notation} 

The following notation concerning time-dependent vector fields on $X$ and on
the Cartesian product $X\times \bC$ will be used throughout this paper.

\begin{notation}[Cartesian product]\label{not:tvf}
  Let $X$ be a complex manifold. Consider the product $X \times \bC$.  Let $t$
  be the standard coordinate on $\bC$, with associated vector field
  $\frac{\partial}{\partial t} \in H^0 \bigl( X\times\bC,\, T_{X\times\bC}
  \bigr)$. The projection from $X\times \bC$ to the first factor is denoted by
  $p_X:X\times\bC\to X$. Finally, let $j_t : X \to X \times \bC, x \mapsto
  (x,t)$ be the inclusion map.
\end{notation}

\begin{notation}[Time-dependent vector fields]
  Let $X$ be a complex manifold.  A \emph{time-dependent vector field} $\td$
  on $X$ is a vector field $\td$ on the product $X\times\bC$ contained in the
  subspace
  $$
  H^0\bigl( X \times \bC,\, p_X^*(T_X) \bigr) \subset H^0\bigl( X \times
  \bC,\, T_{X \times \bC}\bigr).
  $$
  For any time $t\in\bC$, the restriction of $\td$ to time $t$ is denoted by
  $$
  \td_t := j_t^*(\sigma) \in H^0(X,\,T_X)
  $$
\end{notation}

\begin{defn}[Vector field with constant flow in time]\label{defn:cf}
  If $\td$ is any time-dependent vector field on a complex manifold $X$, set
  $$
  D(\td) := {\textstyle \frac{\partial}{\partial t}} + \td \in H^0 \bigl(
  X\times\bC,\, T_{X\times\bC} \bigr).
  $$
  We call $D(\td)$ the \emph{vector field with constant flow in time
    associated with $\td$}.
\end{defn}

\subsubsection{Calculus involving time-dependent vector fields}

For later reference, we state without proof a formula involving time-dependent
vector fields and their Lie brackets. The formula is easily checked by a
direct computation in local coordinates.

\begin{lem}[Calculus of time-dependent vector fields]\label{lem:tdvfandlie}
  Let $\td, \sectd \in p_X^*(T_X)$ be any two time-dependent vector fields on
  $X$, and let $n \in \bN$ be any number. Using Notation~\ref{not:tvf}, the
  following equations hold.
  \begin{align}
    \label{f1} [ \td, t^n\cdot \sectd ] & = [ t^n\cdot \td, \sectd ] =
    t^n\cdot [\td,\sectd] \\
    \label{f3} \Lie_{D(\td)} \bigl( {\textstyle \frac{t^n}{n!}} \cdot \sectd
    \bigr) & = {\textstyle \frac{t^n}{n!}} \cdot [\td,\sectd] + {\textstyle
      \frac{t^{n-1}}{(n-1)!}}\cdot \sectd + {\textstyle \frac{t^n}{n!}} \cdot
    {\textstyle \frac{\partial}{\partial t}}\sectd
  \end{align}
  \qed
\end{lem}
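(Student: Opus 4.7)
The plan is to reduce both identities to the standard Leibniz rules for the Lie bracket and the Lie derivative, exploiting the crucial assumption that $\td$ and $\sectd$ live in the subspace $p_X^*(T_X) \subset T_{X\times\bC}$, i.e.\ that they have no $\frac{\partial}{\partial t}$-component. I would first record the identity I will use repeatedly: for any vector fields $\vec V, \vec W$ on $X\times\bC$ and any holomorphic function $g$,
\[
[\vec V, g\cdot \vec W] = g\cdot [\vec V,\vec W] + \vec V(g)\cdot \vec W.
\]
This is a local coordinate computation and is standard.

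For equation~\eqref{f1}, I would apply this Leibniz rule with $g = t^n$. Since $\td$ is a section of $p_X^*T_X$, it annihilates pullbacks of functions on $\bC$, hence $\td(t^n) = 0$. The Leibniz rule then immediately yields $[\td, t^n\cdot \sectd] = t^n\cdot [\td,\sectd]$. The same argument with the roles reversed, using $\sectd(t^n) = 0$, gives the second equality. (The cleanest way to express this without worrying about signs is to observe that the antisymmetry of the bracket reduces the second identity to the first.)

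For equation~\eqref{f3}, I would split $D(\td) = \frac{\partial}{\partial t} + \td$ and use the linearity of the Lie derivative to write
\[
\Lie_{D(\td)}\bigl({\textstyle \frac{t^n}{n!}}\cdot \sectd\bigr) = \Lie_{\partial/\partial t}\bigl({\textstyle \frac{t^n}{n!}}\cdot \sectd\bigr) + \Lie_{\td}\bigl({\textstyle \frac{t^n}{n!}}\cdot \sectd\bigr).
\]
The second summand equals $\frac{t^n}{n!}\cdot [\td,\sectd]$ by the identity just proved in~\eqref{f1}. For the first summand I apply the Leibniz rule for the Lie derivative on the product $\frac{t^n}{n!}\cdot \sectd$: the derivative of the scalar factor produces $\frac{t^{n-1}}{(n-1)!}\sectd$, and the Lie derivative of the vector field $\sectd$ along $\frac{\partial}{\partial t}$ is simply $\frac{\partial}{\partial t}\sectd$ (the componentwise $t$-derivative of the coefficients of $\sectd$), multiplied by the scalar $\frac{t^n}{n!}$. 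Adding the three contributions gives the desired formula.

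The hard part here is essentially nonexistent: both identities are formal consequences of the Leibniz rule combined with the vanishing $\td(t^n) = 0$, which is the defining feature of a time-dependent vector field in the sense of the paper. The only thing one has to be careful about is not to confuse the $\bC$-linear operator $\frac{\partial}{\partial t}$ acting on coefficients of $\sectd$ with the vector field $\frac{\partial}{\partial t}$ on $X\times\bC$; this is trivially resolved by expanding everything in local coordinates on $X\times\bC$, which is also how one would justify the Leibniz rule in the first place.
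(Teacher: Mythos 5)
Your proof is correct and is essentially the argument the paper intends: the paper states the lemma without proof, remarking only that it ``is easily checked by a direct computation in local coordinates,'' and your reduction to the Leibniz rule together with the observation that sections of $p_X^*(T_X)$ annihilate $t^n$ is precisely that computation, cleanly organized. Nothing further is needed.
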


\subsection{Jets induced by time-dependent vector fields}
\label{ssec:tdinducedjets}

If $X$ is a complex manifold, $x \in X$ a point and $\vec A \in H^0 \bigl(
X\times\bC,\, p_X^*(T_X) \bigr)$ a time-dependent vector field on $X$, then
the local flow of $\vec A$ through $x$ induces a curve germ $\gamma_x$ at $x$,
uniquely determined by the properties $\gamma_x(0)=x$ and $\gamma_x'(t) = \vec
A_t(\gamma_x(t))$ for all $t$. We denote the associated jets as follows.

\begin{defn}[Jets induced by time-dependent vector fields]\label{defn:tdinducedjets}
  Let $\vec A \in H^0 \bigl( X\times\bC,\, p_X^*(T_X) \bigr)$ be a
  time-dependent vector field on a complex manifold $X$. We denote by
  $$
  \tau^n_{\vec A} : X \to \Jet^n(X)
  $$
  the holomorphic section which assigns to each point $x\in X$ the $n$-th
  order jet at $x\in X$ associated with the $\vec A$-integral curve through
  $x$.
\end{defn}

The following is the key observation of this paper and of the previous paper
\cite{KKL10}. In its simplest form, Fact~\ref{fact:tddiffjets} considers two
time-dependent vector fields $\vec A$, $\vec B$ on $X$ whose associated $n$-th
order jets $\tau^n_{\vec A}$ and $\tau^n_{\vec B}$ agree at a point $x \in
X$. It gives a formula for the difference between $(n+1)$-st order jets
$\tau^{n+1}_{\vec A}(x)$ and $\tau^{n+1}_{\vec B}(x)$ which, using the affine
bundle structure of $\Jet^{n+1}(X) \to \Jet^n(X)$, can be identified with a
tangent vector $\tau^{n+1}_{\vec A}(x) - \tau^{n+1}_{\vec B}(x) \in
T_X|_x$. This formula allows for explicit computations of obstruction cocycles
relevant when trying to lift infinitesimal deformations from $n$-th to
$(n+1)$-st order. We refer to \cite[Sect.~3]{KKL10} for a more detailed
discussion.

\begin{fact}[\protect{Difference formula, cf.~\cite[Cor.~2.5.3]{Joerder10}, \cite[Thm.~4.3]{KKL10}}]\label{fact:tddiffjets}
  Let $\vec A_1, \ldots, \vec A_n, \vec B \in H^0 \bigl( X\times\bC,\,
  p_X^*(T_X) \bigr)$ be time-dependent vector fields on a complex manifold
  $X$. Let $x \in X$ be any point such that the induced $n$-th order jets
  agree at $x$,
  $$
  \tau^n_{\vec A_1}(x) = \tau^n_{\vec A_2}(x) = \cdots = \tau^n_{\vec B}(x).
  $$
  Using Fact~\ref{fact:affinebundle} and Notation~\ref{not:affinebundle} to
  identify the difference between the $(n+1)$-st order jets $\tau^{n+1}_{\vec
    A_n}(x)$ and $\tau^{n+1}_{\vec B}(x)$ with a tangent vector in $T_X|_x$,
  the difference is given as
  $$
  \tau^{n+1}_{\vec B}(x) - \tau^{n+1}_{\vec A_n}(x) = \bigl( \Lie_{D(\vec
    A_1)} \circ \cdots \circ \Lie_{D(\vec A_n)} D(\vec B) \bigr)_0(x) \in
  T_X|_x,
  $$
  where $L_{D({\vec A_i})}$ denotes Lie-derivative with respect to the vector
  field $D({\vec A_i})$ with constant flow in time. \qed
\end{fact}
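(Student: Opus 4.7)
The plan is to fix local coordinates near $x$ and read off jets via Taylor coefficients of flows. Choose coordinates $(x^1,\ldots,x^d)$ around $x$. For a time-dependent vector field $\vec A$, the integral curve through $(x,0)$ of the associated vector field $D(\vec A) = \partial_t + \vec A$ on $X \times \bC$ projects to the integral curve $\gamma_{\vec A}$ of $\vec A$ through $x$, whose Taylor expansion has the standard form
$$\gamma_{\vec A}^* x^i \;=\; \sum_{k \ge 0} \tfrac{t^k}{k!}\, \bigl( D(\vec A)^k x^i \bigr)(x, 0).$$
Under the affine bundle identification of Fact~\ref{fact:affinebundle}, the jet $\tau^n_{\vec A}(x)$ is thereby encoded in the values $\bigl( D(\vec A)^k x^i \bigr)(x, 0)$ for $k \le n$, while the difference of two $(n+1)$-st order jets with matching $n$-th part is read off from the top Taylor coefficient.

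Under this description the hypothesis becomes $\bigl(D(\vec A_i)^k f\bigr)(x,0) = \bigl(D(\vec B)^k f\bigr)(x,0)$ for all $k \le n$ and arbitrary holomorphic $f$; a short induction then shows that any word $D(\vec C_1) \cdots D(\vec C_m)$ of length $m \le n$ with letters in $\{\vec A_1,\ldots,\vec A_n,\vec B\}$ produces the same value at $(x,0)$ when applied to $f$. I would then prove the formula by induction on $n$. For $n = 1$, split
$$D(\vec B)^2 - D(\vec A_1)^2 \;=\; \bigl(D(\vec B)-D(\vec A_1)\bigr)D(\vec B) + D(\vec A_1)\bigl(D(\vec B)-D(\vec A_1)\bigr).$$
Evaluated at $(x,0)$, the first summand vanishes since $D(\vec B)-D(\vec A_1) = \vec B - \vec A_1$ vanishes there, while the second equals $[D(\vec A_1), D(\vec B)]$ at $(x,0)$ after using the coincidence $D(\vec A_1)|_{(x,0)} = D(\vec B)|_{(x,0)}$ on the function $\vec B - \vec A_1$.

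For the inductive step, I would write $D(\vec B)^{n+1} - D(\vec A_n)^{n+1}$ as a telescoping sum of commutators, peel off an outer factor to produce the leading Lie derivative $L_{D(\vec A_1)}$, and then apply the inductive hypothesis to the residual expression. The word-independence from the reformulated hypothesis is what legitimises replacing each interior $D(\vec A_j)$-factor by the appropriate neighbour from the list, and thus produces the nested Lie derivative structure $L_{D(\vec A_1)} \circ \cdots \circ L_{D(\vec A_n)} D(\vec B)$; the identities of Lemma~\ref{lem:tdvfandlie}, in particular the interaction of $L_{D(\vec A)}$ with powers of $t$, ensure that all factorials and signs balance.

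The main obstacle is the combinatorial bookkeeping of the inductive step: each commutation $[D(\vec A_i),\cdot]$ must be tracked with the correct sign, and one has to argue that every leftover term either vanishes at $(x,0)$ because the relevant vector field does, or, by the word-independence of Step~2, coincides with a term already accounted for in the iterated bracket. The delicate point is that the hypothesis gives coincidence only at the single point $(x,0)$, not in a neighbourhood, so every rearrangement must respect the filtration by ``order of vanishing at $(x,0)$'' induced by the common $n$-th jet. A fully detailed version of this computation is carried out in \cite[Thm.~4.3]{KKL10} and \cite[Cor.~2.5.3]{Joerder10}, to which the reader may be referred for the technical details.
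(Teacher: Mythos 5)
The paper itself does not prove Fact~\ref{fact:tddiffjets}: it is stated without proof and imported from \cite[Thm.~4.3]{KKL10} and \cite[Cor.~2.5.3]{Joerder10}, so there is no in-paper argument to compare against; your outline is consistent with the coordinate-based induction carried out in those references. The parts you actually execute are correct: the expansion $\gamma_{\vec A}^*x^i=\sum_k \frac{t^k}{k!}\bigl(D(\vec A)^kx^i\bigr)(x,0)$ does translate equality of $n$-jets into $\bigl(D(\vec A_i)^kf\bigr)(x,0)=\bigl(D(\vec B)^kf\bigr)(x,0)$ for $k\le n$, the word-independence statement is true, and your $n=1$ computation is right (I checked the formula independently for $n=2$ as well; note that for the stated formula to hold without a factor $(n+1)!$ one must take the affine-bundle difference to be the difference of the raw derivatives $\gamma^{(n+1)}(0)$ rather than of the normalized Taylor coefficients). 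Two caveats. First, the ``short induction'' proving word-independence is not as short as advertised: to change one letter you must show that a word $W$ of length $\le n-1$ annihilates $\bigl((\vec B-\vec A_i)f\bigr)$ at $(x,0)$, which needs a generalized Leibniz expansion together with the identification $\bigl(D(\vec C)^k c^j\bigr)(x,0)=\gamma_{\vec C}^{(k+1)}(0)^j$; this auxiliary vanishing statement has to be carried along as part of the induction, and it is really the heart of the matter. Second, the inductive step --- where all the content lies, since $\vec A_1,\dots,\vec A_{n-1}$ do not even occur in $D(\vec B)^{n+1}-D(\vec A_n)^{n+1}$ and must be introduced by hand via word-independence --- is only described, and you ultimately defer it to the very references the paper cites. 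Given that the paper offers nothing more, this is defensible, but as it stands the proposal is a correct and workable plan rather than a complete argument.
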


\section{Admissible vector fields}
\label{sec:atdhvf}

We consider the following setup throughout the present section.

\begin{setup}\label{setup:71}
  Let $X$ be a complex manifold, equipped with a Lie-closed subsheaf $\sF
  \subseteq T_X$ of $\bC_X$-modules. Let $Y \subseteq X$ be a reduced complex
  subspace of $X$ with inclusion map $f:Y\to X$, and let further $\sG
  \subseteq \sF_Y$ be an obstruction sheaf for $\sF$, in the sense of
  Definition~\ref{def:obstrShf}.
\end{setup}

The aim of this section is to define and discuss \emph{admissible} vector
fields. These are time-dependent vector fields whose induced jets are
particularly well-behaved when used to deform the inclusion map $f:Y \to
X$. Since the deformation of $f$ will be defined \emph{locally} on $Y$, we do
not assume compactness of $Y$ in this section.

To begin, we fix the notion a time-dependent vector field in $\sF$, see
Definition~\ref{defn:infdefoindbyf}.

\begin{notation}[Time-dependent vector fields in $\sF$]\label{not:tdvfinf}
  A time-dependent vector field $\vec A\in H^0(X\times\bC, p_X^*(T_X))$ is
  said to be a \emph{time-dependent vector field in $\sF$}, if it can be
  expressed as a finite sum
  $$
  \vec A = \sum_{i=1}^nt^i \vec A_i
  $$
  where $\vec A_i\in H^0(X,\sF)$ is a time-independent vector field in $\sF$,
  and $t$ is the standard coordinate on $\bC$.
\end{notation}

\begin{defn}[Admissible vector field]\label{defn:atdhvf}
  In Setup~\ref{setup:71}, let $\vec A \in H^0 \bigl( X\times\bC ,\,
  p_X^*(T_X) \bigr)$ be a time-dependent vector field in $\sF$, and let $n\geq
  1$ be any natural number. The vector field $\vec A$ is said to be
  \emph{$n$-admissible for the obstruction sheaf $\sG$}, if for any natural
  number $1\leq m\leq n$, the restriction to $Y$ of $\Lie_{D(\vec A)}^m\vec A$
  at time $t=0$ is a section of the obstruction sheaf. In other words, if
  $$
  \bigl( \Lie_{D(\vec A)}^m\vec A \bigr)_0\big|_Y \in H^0 \bigl(Y,\, \sG
  \bigr).
  $$
\end{defn}

\begin{rem}[Admissible fields and derivative in time direction]\label{rem:aslj}
  If $\vec A \in H^0 \bigl( X\times\bC ,\, p_X^*(T_X) \bigr)$ is any
  time-dependent vector field on $X$, then
  $$
  \Lie_{D(\vec A)} \vec A = %
  \Lie_{\vec A + \frac{\partial}{\partial t}} \vec A = %
  \Lie_{\frac{\partial}{\partial t}} \vec A = %
  -\Lie_{\vec A} {\textstyle \frac{\partial}{\partial t}} = %
  -\Lie_{D(\vec A)} {\textstyle \frac{\partial}{\partial t}}.
  $$
  More generally, we have $\Lie^m_{D(\vec A)} \vec A = - \Lie^m_{D(\vec A)}
  \frac{\partial}{\partial t}$.  If $\vec A$ is $n$-admissible for $\sG$, then
  $\bigl(\Lie^m_{D(\vec A)} \frac{\partial}{\partial t}\bigr)_0\big|_Y \in H^0
  \bigl( Y,\, \sG \bigr)$ for all $1 \leq m \leq n$.
\end{rem}

\begin{rem}[Time-independent vector fields in $\sF$ are admissible]\label{rem:hfareadmiss}
  A time-independent section of $\sF$ is $n$-admissible for arbitrary $n$ and
  arbitrary obstruction sheaf, when considered as time-dependent vector
  field. In other words, any field
  $$
  \vec A \in H^0\bigl(X,\, \sF\bigr) \subset H^0\bigl(X\times\bC,\,
  p_X^{-1}(\sF)\bigr)
  $$
  is $n$-admissible for any $\sG$.
\end{rem}

\subsection{Jets induced by admissible fields}
\label{ssec:jibatdhvf}

The geometric meaning of Definition~\ref{defn:atdhvf} is perhaps not
obvious. However, the usefulness of the concept will immediately become clear
once we look at jets induced by admissible vector fields.  The following two
propositions, which form the technical core of this paper, summarize the main
features. Proofs are given in Subsections~\ref{ssec:prep}--\ref{sssec:3-21}
below.

\begin{prop}[Extension of jets from $n$-th to $(n+1)$-st order]\label{prop:prop1}
  In Setup~\ref{setup:71}, let $\vec A \in H^0\bigl(X \times \bC,\,p_X^*(T_X)
  \bigr)$ be an $n$-admissible vector field for $\sG$ with $n\geq 1$. If $\vec
  \Delta \in H^0\bigl(X,\,\sF\bigr)\subset H^0\bigl(X ,\, T_X)$ is any
  time-independent vector field in $\sF$ whose restriction to $Y$ lies in the
  obstruction sheaf, $\vec \Delta|_Y\in H^0\bigl(Y,\,\sG\bigr)$, then there
  exists a time-dependent vector field $\vec B \in H^0
  \bigl(X\times\bC,\,p_X^*(T_X) \bigr)$ such that the following holds true.
  \begin{enumerate}
  \item\label{il:3-17-1} The time-dependent vector field $\vec B$ is
    $(n+1)$-admissible for $\sG$.
  \item\label{il:3-17-2} The $n$-th order jets induced by $\vec A$ and $\vec
    B$ agree on $Y$,
    $$
    \tau^n_{\vec A}\big|_{Y} = \tau^n_{\vec B}\big|_{Y}.
    $$
  \item\label{il:3-17-3} The difference between the induced $(n+1)$-st order
    jets is given by $\vec \Delta$,
    $$
    \tau^n_{\vec B}\big|_{Y}-\tau^n_{\vec A}\big|_{Y} = \vec \Delta\big|_{Y}.
    $$
  \end{enumerate}
\end{prop}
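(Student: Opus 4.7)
\emph{Strategy.} I would take the explicit ansatz
\[
\vec B \;:=\; \vec A + \frac{t^n}{n!}\vec\Delta + \frac{t^{n+1}}{(n+1)!}\vec E,
\]
where $\vec E \in H^0(X,\sF)$ is a time-independent correction to be pinned down at the end. The $t^n$-term is designed to realise the prescribed jet discrepancy of item~(\ref{il:3-17-3}), while the $t^{n+1}$-term contributes nothing to jets of order $\leq n+1$ but will absorb the ``top-degree'' obstruction to $(n+1)$-admissibility.

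\emph{Properties (\ref{il:3-17-2}) and (\ref{il:3-17-3}).} Iterating the difference formula Fact~\ref{fact:tddiffjets} along $D(\vec B)-D(\vec A) = \frac{t^n}{n!}\vec\Delta + \frac{t^{n+1}}{(n+1)!}\vec E$, it suffices to establish that, for any time-independent vector field $\vec Z$,
\[
\bigl(\Lie_{D(\vec A)}^k\bigl({\textstyle\frac{t^j}{j!}}\vec Z\bigr)\bigr)_0 \;=\; \begin{cases} 0 & \text{if } k<j,\\ \vec Z|_0 & \text{if } k=j.\end{cases}
\]
This is an easy induction on $k$ using the Leibniz identity $\Lie_{D(\vec A)}\bigl(\frac{t^j}{j!}\vec Z\bigr) = \frac{t^{j-1}}{(j-1)!}\vec Z + \frac{t^j}{j!}\Lie_{D(\vec A)}\vec Z$. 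Applying it with $j\in\{n,n+1\}$ gives $\tau^k_{\vec B}|_Y=\tau^k_{\vec A}|_Y$ for $k\leq n$ and $\tau^{n+1}_{\vec B}|_Y-\tau^{n+1}_{\vec A}|_Y=\vec\Delta|_Y$, as required.

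\emph{Property (\ref{il:3-17-1}).} The heart of the argument is the structural observation that for any time-dependent vector field $\vec X=\sum_{i\geq 1}t^i\vec X_i$ in $\sF$, the quantity $\omega_m(\vec X):=\Lie_{D(\vec X)}^m\vec X$, evaluated at $t=0$, is a Lie polynomial $P_m(\vec X_1,\ldots,\vec X_m)$ in the first $m$ Taylor coefficients and depends \emph{linearly} on the top coefficient $\vec X_m$ with leading term $m!\vec X_m$. Indeed, each application of $\Lie_{D(\vec X)}$ raises the total $\partial_t$-order by at most one, so every term of $\omega_m$ is an iterated bracket of factors $\partial_t^{k_i}\vec X$ with $\sum k_i\leq m$; for $\vec X_m$ to survive at $t=0$ one needs some $k_i=m$, whence all other $k_i$'s vanish, and the only such term is the bare leading $\partial_t^m\vec X|_0=m!\vec X_m$. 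Writing $\vec B=\sum_i t^i\vec B_i$ with $\vec B_i=\vec A_i$ for $i\notin\{n,n+1\}$, $\vec B_n=\vec A_n+\tfrac{1}{n!}\vec\Delta$, $\vec B_{n+1}=\vec A_{n+1}+\tfrac{1}{(n+1)!}\vec E$, the admissibility check then splits as follows: for $m<n$ one has $\omega_m(\vec B)|_0 = \omega_m(\vec A)|_0\in H^0(Y,\sG)$ by $n$-admissibility of $\vec A$; for $m=n$, linearity in the top coefficient yields $\omega_n(\vec B)|_0 = \omega_n(\vec A)|_0+\vec\Delta$, again in $H^0(Y,\sG)$; and for $m=n+1$, linearity yields
\[
\omega_{n+1}(\vec B)|_0 \;=\; \omega_{n+1}(\vec A)|_0 + \vec E + R,
\]
where $R$ collects the cross-terms arising when $\vec A_n+\tfrac{1}{n!}\vec\Delta$ is substituted into the non-leading part of $P_{n+1}$. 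Choosing $\vec E := -\omega_{n+1}(\vec A)|_0 - R$ makes $\omega_{n+1}(\vec B)|_0$ vanish identically.

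\emph{Main obstacle.} The substantive step is the $m=n+1$ case: unlike the lower-order cases, the base term $\omega_{n+1}(\vec A)|_0|_Y$ is \emph{not} a priori a section of $\sG$, since $\vec A$ is only $n$-admissible. What rescues the argument is the Lie-closure of $\sF$: the quantities $\omega_{n+1}(\vec A)|_0$ and $R$ are Lie polynomials in the global $\sF$-sections $\vec A_1,\vec A_2,\ldots$ and $\vec\Delta$, hence themselves global sections of $\sF$ on $X$, so the required $\vec E$ exists in $H^0(X,\sF)$ and $\vec B$ remains a time-dependent vector field in $\sF$, making $(n+1)$-admissibility meaningful and confirmed.
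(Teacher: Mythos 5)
Your proof is correct and follows the same strategy as the paper: the identical ansatz $\vec B = \vec A + \frac{t^n}{n!}\vec\Delta + \frac{t^{n+1}}{(n+1)!}\vec E$, with Properties~(\ref{il:3-17-2}) and~(\ref{il:3-17-3}) extracted from the difference formula of Fact~\ref{fact:tddiffjets} exactly as in the paper. The one place where you genuinely diverge is the verification of $(n+1)$-admissibility. The paper proves, by an explicit induction on $m$ (Lemma~\ref{lem:prop1}), the congruences $\Lie_{D(\vec B)}^m \vec B \equiv \Lie_{D(\vec A)}^m\vec A + \frac{t^{n-m}}{(n-m)!}\vec\Delta \mod (t^{n-m+1})$ and $\Lie_{D(\vec B)}^{n+1}\vec B\equiv\Lie_{D(\vec A)}^{n+1}\vec A + \vec E + n[\vec A,\vec\Delta] \mod (t)$, which pins down the correction term explicitly as $\vec E = -n[\vec A,\vec\Delta]-\bigl(\Lie_{D(\vec A)}^{n+1}\vec A\bigr)_0$. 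You instead prove a structural statement --- that $\bigl(\Lie_{D(\vec X)}^m\vec X\bigr)_0$ is a Lie polynomial in the Taylor coefficients $\vec X_0,\dots,\vec X_m$ in which the top coefficient enters only through the summand $m!\,\vec X_m$, obtained by expanding $(\partial_t+\Lie_{\vec X})^m$ and pushing the $\partial_t$'s inward --- and define $\vec E$ implicitly as $-\omega_{n+1}(\vec A)|_0 - R$. The two are equivalent (your cross-term $R$ is the paper's $n[\vec A_0,\vec\Delta]$); your version avoids tracking error terms modulo powers of $t$ at the cost of a combinatorial expansion, and it has the merit of making clear \emph{why} only the cases $m=n$ and $m=n+1$ need new input. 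You also correctly identify and discharge the one point that must not be skipped, namely that $\vec E$ lies in $H^0(X,\sF)$ because it is a Lie polynomial in global sections of the Lie-closed sheaf $\sF$, so that $\vec B$ is again a time-dependent vector field in $\sF$ and admissibility is meaningful.
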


\begin{prop}[Differences of jets induced by admissible vector fields]\label{prop:prop2}
  In Setup~\ref{setup:71}, let $n \geq 1$ be any number and $\vec A, \vec B
  \in H^0 \bigl(X\times\bC,\, p_X^*(T_X) \bigr)$ be two time-dependent vector
  fields, both of them $n$-admissible for the obstruction sheaf $\sG$. If the
  induced $n$-th order jets agree on $Y$,
  $$
  \tau^n_{\vec A}\big|_{Y} = \tau^n_{\vec B}\big|_{Y},
  $$
  then the difference between the $(n+1)$-st order deformations of $f$ lies in
  the obstruction sheaf $\sG$. In other words,
  $$
  \tau^{n+1}_{\vec B}\big|_{Y} - \tau^{n+1}_{\vec A}\big|_{Y} \in H^0 \bigl(
  Y,\, \sG \bigr).
  $$
\end{prop}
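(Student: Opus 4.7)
The plan is to translate the desired jet difference into a formula involving iterated Lie derivatives via Fact~\ref{fact:tddiffjets}, and then to exploit the admissibility hypotheses together with the obstruction-sheaf property of $\sG$ to show that the resulting expression is a section of $\sG$ over $Y$.

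First I would apply Fact~\ref{fact:tddiffjets} with $\vec A_1 = \cdots = \vec A_n = \vec A$. Since the $n$-jets of $\vec A$ and $\vec B$ agree on $Y$ by hypothesis, this yields
$$
\tau^{n+1}_{\vec B}|_Y - \tau^{n+1}_{\vec A}|_Y = \bigl(\Lie_{D(\vec A)}^n D(\vec B)\bigr)_0\big|_Y.
$$
Decomposing $D(\vec B) = D(\vec A) + \vec C$, where $\vec C := \vec B - \vec A$ is a time-dependent vector field in $\sF$, and using $\Lie_{D(\vec A)} D(\vec A) = 0$ together with $n \geq 1$, the right-hand side simplifies to $(\Lie_{D(\vec A)}^n \vec C)_0|_Y$. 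So the task reduces to verifying that this lies in $H^0(Y, \sG)$.

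Next I would iterate Fact~\ref{fact:tddiffjets} at the orders $1, \ldots, n-1$ to turn the agreement of $n$-jets into the strict vanishings
$$
(\Lie_{D(\vec A)}^m \vec C)_0|_Y = 0 \quad\text{for } m = 0, 1, \ldots, n-1.
$$
In particular, writing $\vec E := \Lie_{D(\vec A)}^{n-1} \vec C \in \sF$, the value $\vec E_0$ at $t=0$ satisfies $\vec E_0|_Y = 0$. Using Lemma~\ref{lem:tdvfandlie} at $t = 0$,
$$
(\Lie_{D(\vec A)} \vec E)_0 = \vec E_1 + [\vec A_0, \vec E_0].
$$
The bracket term is immediately taken care of by the obstruction-sheaf property: the two sections $\vec A_0$ and $\vec A_0 + \vec E_0$ of $\sF$ agree on $Y$, so $[\vec A_0, \vec E_0]|_Y \in H^0(Y, \sG)$. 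Thus everything comes down to proving that the $t$-coefficient $\vec E_1|_Y$ of $\Lie_{D(\vec A)}^{n-1} \vec C$ at $t = 0$ is a section of $\sG$.

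The crux of the proof, and the main obstacle, lies in this final step. The plan is to unfold $\vec E = \Lie_{D(\vec A)}^{n-1} \vec C$ by iterated use of Lemma~\ref{lem:tdvfandlie}, writing $\vec A = \sum_j t^j \vec A_j$ and $\vec C = \sum_i t^i \vec C_i$, and to extract $\vec E_1$ as an explicit polynomial in the $\vec A_j$'s and $\vec C_i$'s assembled from iterated Lie brackets. Each of these summands must then be placed in $\sG$ along $Y$ by combining three ingredients: (i) the admissibility conditions $(\Lie_{D(\vec A)}^m \vec A)_0|_Y, (\Lie_{D(\vec B)}^m \vec B)_0|_Y \in \sG$ for $1 \leq m \leq n$, which control specific combinations of Taylor coefficients of $\vec A$ and of $\vec B$; (ii) the obstruction-sheaf property applied to pairs of sections of $\sF$ that agree on $Y$; and (iii) the lower-order vanishings above, which provide further relations among the $\vec C_i$'s on $Y$. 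A potentially cleaner variant is to produce several independent formulas for $\tau^{n+1}_{\vec B}|_Y - \tau^{n+1}_{\vec A}|_Y$ by choosing the $\vec A_i$'s in Fact~\ref{fact:tddiffjets} as different mixtures of $\vec A$ and $\vec B$, and then use the linear identities between these expressions to bypass most of the combinatorial bookkeeping.
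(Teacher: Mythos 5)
Your reduction is sound and in fact coincides with the paper's own strategy: Fact~\ref{fact:tddiffjets} identifies the difference with $\bigl(\Lie_{D(\vec A)}^n D(\vec B)\bigr)_0\big|_Y$, the lower-order agreements give $\bigl(\Lie_{D(\vec A)}^m \vec C\bigr)_0\big|_Y=0$ for $m\le n-1$, and splitting off the bracket $[\vec A_0,\vec E_0]$ (handled by the obstruction-sheaf property because $\vec E_0|_Y=0$) leaves exactly the term $\vec E_1\big|_Y$, which in the paper's notation is $-\bigl(\Lie_{\vec R}\frac{\partial}{\partial t}\bigr)_0\big|_Y$ with $\vec R=\Lie_{D(\vec A)}^{n-1}D(\vec B)$. (One small point you should make explicit: you need $\vec E_0\in H^0(X,\sF)$ before invoking Definition~\ref{def:obstrShf}; this holds because $\sF$ is Lie-closed and a sheaf of $\bC_X$-modules, so $\Lie_{D(\vec A)}$ preserves time-dependent vector fields in $\sF$.)

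However, the step you label ``the crux'' is left as a plan rather than a proof, and it is precisely the technical heart of the proposition; as written, this is a genuine gap. The naive expansion of $\vec E_1$ into iterated brackets of the Taylor coefficients $\vec A_j,\vec C_i$ produces individual summands such as $[\vec A_0,\vec C_1]$ in which neither argument vanishes on $Y$ and the two arguments do not agree on $Y$, so no single ingredient among (i)--(iii) places such a term in $\sG$; only certain combinations of these summands are controlled, and identifying the right combinations is the whole difficulty. The paper resolves this with two lemmas: Lemma~\ref{lem:hering} uses the Jacobi identity to rewrite $\Lie_{\vec R}\frac{\partial}{\partial t}$ as a linear combination of words $\Lie_{\vec F_1}\circ\cdots\circ\Lie_{\vec F_n}\bigl(\frac{\partial}{\partial t}\bigr)$ with each $\vec F_i\in\{D(\vec A),D(\vec B)\}$, and Lemma~\ref{lem:prop2} shows each such word lands in $\sG$ along $Y$ by a sorting argument: adjacent transpositions of the $\Lie_{\vec F_i}$ change the word by brackets $[\vec P^\alpha,\vec T^\alpha]$ with $\vec P^\alpha_0$ vanishing on $Y$ (by Fact~\ref{fact:tddiffjets}) and $\vec T^\alpha_0\in\sF$, hence by sections of $\sG$; a bubble-sort plus a replacement step then reduces to the word $\Lie_{D(\vec B)}^n\bigl(\frac{\partial}{\partial t}\bigr)$, where $n$-admissibility of $\vec B$ and Remark~\ref{rem:aslj} conclude. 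Your ``cleaner variant'' of mixing $\vec A$ and $\vec B$ in Fact~\ref{fact:tddiffjets} is close in spirit to exactly these mixed words, but until you carry out an argument of this kind, the proof is incomplete.
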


The proofs of Propositions~\ref{prop:prop1} and \ref{prop:prop2} are quite
elementary, but somewhat lengthy and tedious. The reader interested in gaining
an overview of the argumentation is advised to skip
Sections~\ref{ssec:prep}--\ref{sssec:3-21} on first reading and continue with
Section~\ref{ssec:higherorderadm} on page~\pageref{ssec:higherorderadm}, where
Propositions~\ref{prop:prop1} and \ref{prop:prop2} are used to lift
first-order infinitesimal deformations of $f$ to arbitrary order.

\subsection{Preparation for the proof of Proposition~\ref*{prop:prop1}}
\label{ssec:prep}

The proof of Proposition~\ref{prop:prop1} relies on the following
computational lemma\PreprintAndPublication{, which we formulate and prove in
  the current preparatory Section~\ref{ssec:prep}}{}.

\begin{lem}\label{lem:prop1}
  In the setup of Proposition~\ref{prop:prop1}, let $\vec E$ be any
  time-independent vector field in $\sF$ and consider the time-dependent
  vector field in $\sF$
  \begin{equation}\label{eq:defB}
    \vec B := \vec A + \frac{t^n}{n!} \vec \Delta + \frac{t^{n+1}}{(n+1)!}\vec E.    
  \end{equation}
  Then the following equalities hold up to higher-order terms of $t$, for all
  $1 \leq m \leq n$,
  \begin{align}
    \label{eq:camillo} \Lie_{D(\vec B)}^m (\vec B) & \equiv \Lie_{D(\vec
      A)}^m(\vec A) +
    {\textstyle\frac{t^{n-m}}{(n-m)!}}\vec \Delta && \mod (t^{n-m+1}) \\
    \label{eq:peppone} \Lie_{D(\vec B)}^{n+1}\vec B & \equiv
    \Lie_{D(\vec A)}^{n+1}(\vec A)+ \vec E + n[\vec A,\vec
    \Delta] && \mod (t).
  \end{align}
\end{lem}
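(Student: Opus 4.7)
The plan is to prove both congruences by a single induction on $m$, using a strengthened inductive statement that retains enough information at order $t^{n-m+1}$ to serve as input for \eqref{eq:peppone}. Concretely, I aim to show for every $1\leq m\leq n$ that
\begin{equation*}
\Lie_{D(\vec B)}^m \vec B \equiv \Lie_{D(\vec A)}^m \vec A + \frac{t^{n-m}}{(n-m)!}\vec \Delta + \frac{t^{n-m+1}}{(n-m+1)!}\bigl(\vec E + (m-1)\,[\vec A,\vec \Delta]\bigr) \mod (t^{n-m+2}).
\end{equation*}
Retaining only the $\vec \Delta$-part recovers \eqref{eq:camillo}, while applying $\Lie_{D(\vec B)}$ once more to the case $m=n$ and evaluating at $t=0$ will yield \eqref{eq:peppone}. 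The base case $m=1$ follows by direct computation: since $[\vec B,\vec B]=0$, the left-hand side reduces to $\partial_t \vec B$, and differentiating \eqref{eq:defB} termwise in $t$ produces precisely the claimed terms, with the coefficient $(m-1)=0$ of $[\vec A,\vec \Delta]$.

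For the inductive step I would write $\vec B = \vec A + R_1 + R_2$ with $R_1 := \tfrac{t^n}{n!}\vec \Delta$ and $R_2 := \tfrac{t^{n+1}}{(n+1)!}\vec E$, note that $D(\vec B) = D(\vec A) + R_1 + R_2$, and apply $\Lie_{D(\vec B)}$ term by term to the inductive hypothesis. Three accounting points drive the computation: (i) the difference $\Lie_{D(\vec B)}(\Lie_{D(\vec A)}^m\vec A) - \Lie_{D(\vec A)}^{m+1}\vec A$ is a bracket against $R_1+R_2$ whose $t$-order is at least $n \geq n-m+1$ and is therefore negligible; (ii) formula \eqref{f3} applied to $\tfrac{t^{n-m}}{(n-m)!}\vec \Delta$ yields both the expected shifted $\vec \Delta$-term and a new contribution $\tfrac{t^{n-m}}{(n-m)!}[\vec B,\vec \Delta]$, in which the crucial vanishing $[\vec \Delta,\vec \Delta]=0$ reduces the bracket to $[\vec A,\vec \Delta]$ modulo high powers of $t$; (iii) the same formula shifts the existing $\vec E$- and $[\vec A,\vec \Delta]$-terms down by one power of $t$ without creating new leading contributions. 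Tallying the pieces, the coefficient of the new $[\vec A,\vec \Delta]$-term advances from $m-1$ to $m = (m+1)-1$, closing the induction.

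For \eqref{eq:peppone}, applying $\Lie_{D(\vec B)}$ to the $m=n$ instance of the strengthened statement (which is valid modulo $t^2$) lowers the precision by one power of $t$. Evaluating each summand at $t=0$, using $[\vec B,\vec \Delta]|_{t=0} = [\vec A_0,\vec \Delta]$ (a consequence of $[\vec \Delta,\vec \Delta]=0$) together with the identity $\Lie_{D(\vec B)}(\Lie_{D(\vec A)}^n\vec A)|_0 = \Lie_{D(\vec A)}^{n+1}\vec A|_0$, recombines to $\Lie_{D(\vec A)}^{n+1}\vec A + \vec E + \bigl(1+(n-1)\bigr)[\vec A,\vec \Delta]$; the decomposition $1+(n-1)$ of the $[\vec A,\vec \Delta]$-coefficient records the single-step contribution from the leading $\vec \Delta$-term together with the carryover from the strengthened inductive statement.

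I expect the main obstacle to be the meticulous monomial bookkeeping needed to confirm that every cross-term -- brackets with $R_1+R_2$, time derivatives of brackets, and iterated commutators -- carries a $t$-power large enough to be discarded at the appropriate stage. This is routine but error-prone; cleanly separating the congruence manipulations from the explicit differentiations provided by Lemma~\ref{lem:tdvfandlie} should keep the computation tractable.
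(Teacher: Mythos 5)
Your proposal is correct and follows essentially the same route as the paper: the same strengthened inductive statement with the extra $\frac{t^{n-m+1}}{(n-m+1)!}\bigl(\vec E+(m-1)[\vec A,\vec\Delta]\bigr)$ term, the same use of \eqref{f1} and \eqref{f3} for the bookkeeping (your decomposition $D(\vec B)=D(\vec A)+\frac{t^n}{n!}\vec\Delta+\frac{t^{n+1}}{(n+1)!}\vec E$ is just the paper's identity for $\Lie_{D(\vec B)}-\Lie_{D(\vec A)}$ in disguise), and the same derivation of \eqref{eq:peppone} by applying $\Lie_{D(\vec B)}$ once more to the $m=n$ case. Your base case via $\Lie_{D(\vec B)}\vec B=\partial_t\vec B$ is a slightly cleaner (in fact exact) version of the paper's congruence chain, but this is only a cosmetic difference.
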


\PreprintAndPublication{We will prove Lemma~\ref{lem:prop1} in the
  remainder of the present Section~\ref{ssec:prep}.

  \subsubsection*{Proof of Lemma~\ref{lem:prop1}}

  If $\vec F \in H^0 \bigl(X\times\bC,\,p_X^*(T_X)\bigr)$ is any
  time-dependent vector field on $X$, Equation~\eqref{f1} of
  Lemma~\ref{lem:tdvfandlie} shows that
  \begin{equation}\label{eq:ind1}
    \Lie_{D(\vec B)}{\vec F} = \Lie_{D(\vec A)}{\vec F} + {\textstyle \frac{t^n}{n!}} [\vec \Delta, {\vec F}] +
    {\textstyle \frac{t^{n+1}}{(n+1)!}} [\vec E, {\vec F}].
  \end{equation}
  We use equation~\eqref{eq:ind1} to prove Lemma~\ref{lem:prop1} by induction.

  \subsubsection*{Proof of Equation~\eqref{eq:camillo}}

  Let $1\leq m\leq n$ be any given number. Rather than proving
  Equation~\eqref{eq:camillo} directly, we will show by induction that the
  following stronger statement holds true modulo $(t^{n-m+2})$
  \begin{equation}\label{eq:crj}
    \Lie_{D(\vec B)}^m (\vec B)  \equiv \Lie_{D(\vec A)}^m(\vec A) + {\textstyle
      \frac{t^{n-m}}{(n-m)!}}\vec \Delta + {\textstyle \frac{t^{n-m+1}}{(n-m+1)!}}
    \bigl(\vec E + (m-1)[\vec A,\vec \Delta]\bigr)
  \end{equation}
  To start the inductive proof of Equation~\eqref{eq:crj}, we compute the Lie
  derivative $\Lie_{D(\vec B)}(\vec B)$ modulo $(t^{n+1})$,
  \begin{align*}
    \Lie_{D(\vec B)}(\vec B) & \equiv \Lie_{D(\vec A)}(\vec B) +
    {\textstyle \frac{t^n}{n!}}[\vec \Delta,\vec B]
    && \text{by \eqref{eq:ind1}} \\
    & \equiv \Lie_{D(\vec A)}(\vec B) + {\textstyle
      \frac{t^n}{n!}}[\vec \Delta,\vec A] && \text{by
      \eqref{f1}} \\
    & \equiv \Lie_{D(\vec A)}(\vec A) + \Lie_{D(\vec A)} \left(
      {\textstyle \frac{t^n}{n!}} \vec \Delta \right) + \Lie_{D(\vec
      A)} \left( {\textstyle \frac{t^{n+1}}{(n+1)!}}\vec E \right) +
    {\textstyle \frac{t^n}{n!}}[\vec \Delta,\vec A] && \text{by \eqref{eq:defB}} \\
    & \equiv \Lie_{D(\vec A)}(\vec A) + {\textstyle
      \frac{t^{n-1}}{(n-1)!}}\vec \Delta + {\textstyle
      \frac{t^n}{n!}}[\vec A,\vec \Delta] + {\textstyle
      \frac{t^n}{n!}} \vec E +
    {\textstyle \frac{t^n}{n!}}[\vec \Delta,\vec A] &&  \text{by \eqref{f3}} \\
    & \equiv \Lie_{D(\vec A)}(\vec A) + {\textstyle
      \frac{t^{n-1}}{(n-1)!}}\vec \Delta + {\textstyle \frac{t^n}{n!}}
    \vec E
  \end{align*}
  This proves the Equation~\eqref{eq:crj} in case $m=1$.

  For the inductive step, suppose that Equation~\eqref{eq:crj} holds true
  modulo $(t^{n-m+2})$ for some given number $1\leq m< n$. That is, assume
  that there exists a time-dependent vector field ${\vec F} \in H^0\bigl(
  X\times\bC ,\, p_X^*(T_X) \bigr)$ such that the following holds
  \begin{multline}\label{eq:xcf}
    \Lie_{D(\vec B)}^m(\vec B) = \Lie_{D(\vec A)}^m(\vec A) +
    {\textstyle \frac{t^{n-m}}{(n-m)!}}\vec \Delta + {\textstyle
      \frac{t^{n-m+1}}{(n-m+1)!}}  \bigl(\vec E + (m-1)[\vec A,\vec
    \Delta] \bigr) + \\ t^{n-m+2}{\vec F}.
  \end{multline}
  We obtain the following sequence of equalities modulo $(t^{n-m+1})$.
  \begin{align*}
    & \Lie_{D(\vec B)}^{m+1}(\vec B) \equiv \Lie_{D(\vec B)}
    \Lie_{D(\vec B)}^m(\vec B) \equiv
    \Lie_{D(\vec A)} \Lie_{D(\vec B)}^m(\vec B) && \text{by \eqref{eq:ind1}} \\
    & \equiv\Lie_{D(\vec A)}\Bigl(\Lie_{D(\vec A)}^m(\vec A) +
    {\textstyle \frac{t^{n-m}}{(n-m)!}}\vec \Delta + {\textstyle
      \frac{t^{n-m+1}}{(n-m+1)!}}  \bigl(\vec E +
    (m-1)[\vec A,\vec \Delta] \bigr) \Bigr) && \text{by \eqref{eq:xcf}} \\
    & \equiv \Lie_{D(\vec A)}^{m+1}(\vec A) +
    {\textstyle\frac{t^{n-(m+1)}}{(n-(m+1))!}}\vec \Delta +
    {\textstyle\frac{t^{n-m}}{(n-m)!}}[\vec A,\vec \Delta]
    +  {\textstyle \frac{t^{n-m}}{(n-m)!}} \bigl(\vec E + (m-1)[\vec A,\vec \Delta] \bigr)  &&  \text{by \eqref{f3}}\\
    &\equiv \Lie_{D(\vec A)}^{m+1}(\vec
    A)+{\textstyle\frac{t^{n-(m+1)}}{(n-(m+1))!}}\vec \Delta +
    {\textstyle \frac{t^{n-m}}{(n-m)!}} \bigl(\vec E+m[\vec A,\vec
    \Delta] \bigr)
  \end{align*}
  Equation~\eqref{eq:crj} thus holds true for all $1\leq m\leq n$.  This
  proves \eqref{eq:camillo} for all numbers $1 \leq m \leq n$.

  \subsubsection*{Proof of Equation~\eqref{eq:peppone} and end of proof}

  Equation~\eqref{eq:peppone} follows from the same computation as before,
  considering Equation~\eqref{eq:crj} in case $m = n$, and applying
  Equation~\eqref{eq:ind1} to compute $\Lie_{D(\vec B)}^{n+1}(\vec B) =
  \Lie_{D(\vec B)} \Lie_{D(\vec B)}^n(\vec B)$ modulo $(t)$. This finishes the
  proof of Lemma~\ref{lem:prop1}. \qed

}{Lemma~\ref{lem:prop1} easily follows from a direct but rather tedious
  computation. Details are found in the preprint version of this paper,
  available on the arXiv.}

\subsection{Proof of Proposition~\ref*{prop:prop1}}

Consider the time-dependent vector field
\begin{equation}\label{eq:dB2}
  \vec B := \vec A + \frac{t^n}{n!}\vec \Delta + \frac{t^{n+1}}{(n+1)!}\vec E,
  \quad\text{where}\quad \vec E := -n [\vec A,\vec \Delta] - \bigl(\Lie_{D(\vec A)}^{n+1}\vec A\bigr)_0.
\end{equation}

Since $\vec \Delta|_Y$ is a section of the obstruction sheaf $\sG$,
Equation~\eqref{eq:camillo} immediately implies that $\vec B$ is
$n$-admissible. By choice of $\vec E$, Equation~\eqref{eq:peppone} shows that
$\vec B$ is in fact $(n+1)$-admissible. This already shows
Property~(\ref{il:3-17-1}) claimed in Proposition~\ref{prop:prop1}.

\subsubsection*{Proof of Proposition~\ref*{prop:prop1}, Property~(\ref{il:3-17-2})}

To show Property~(\ref{il:3-17-2}), we will prove by induction that
\begin{equation}\label{eq:jagr}
  \tau^m_{\vec B}\big|_{Y} = \tau^m_{\vec A}\big|_{Y}, \quad\text{for all } 1 \leq m \leq n. 
\end{equation}

We start the induction with the case where $m = 1$. In this case
Equation~\eqref{eq:jagr} simply asserts that the vector fields $D(\vec A)$ and
$D(\vec B)$ agree along $Y$ at time $t=0$. That, however, is clear by choice
of $\vec B$.

For the inductive step, assume that Equation~\eqref{eq:jagr} was shown for a
certain number $1 \leq m < n$.  It will then follow from
Fact~\ref{fact:tddiffjets} that the difference between the $(m+1)$-st order
jets is given as
\begin{equation}\label{eq:dj}
  \tau^{m+1}_{\vec B}\big|_{Y} - \tau^{m+1}_{\vec A}\big|_{Y} =
  \bigl( \Lie_{D(\vec A)}^m D(\vec B) \bigr)_0 \Bigr|_{Y}.
\end{equation}
But since
\begin{align*}
  \Lie_{D(\vec A)}^m D(\vec B) &= \Lie_{D(\vec A)}^m \left( D(\vec A)+
    {\textstyle \frac{t^n}{n!}} \vec \Delta + {\textstyle
      \frac{t^{n+1}}{(n+1)!}} \vec E \right) && \text{by~\eqref{eq:dB2}}\\
  &= \Lie_{D(\vec A)}^m \left( {\textstyle \frac{t^n}{n!}} \vec \Delta +
    {\textstyle \frac{t^{n+1}}{(n+1)!}} \vec E \right)\\
  &\equiv {\textstyle \frac{t^{n-m}}{(n-m)!}}\vec \Delta \mod (t^{n-m+1}) &&
  \text{by~\eqref{f3}},
\end{align*}
it is clear that the difference~\eqref{eq:dj} vanishes as required.  This
finishes the proof of Property~(\ref{il:3-17-2}) claimed in
Proposition~\ref{prop:prop1}.

\subsubsection*{Proof of Proposition~\ref*{prop:prop1}, Property~(\ref{il:3-17-3})}

Fact~\ref{fact:tddiffjets} and Property~(\ref{il:3-17-2}) together imply that
the difference between the $(n+1)$-st order jets is given by
$$
\tau^{n+1}_{\vec B}\big|_{Y} - \tau^{n+1}_{\vec A}\big|_{Y} = \bigl(
\Lie_{D(\vec A)}^nD(\vec B)\bigr)_0 \Bigr|_{Y}.
$$
As in the proof of Property~(\ref{il:3-17-2}) above we obtain that
$$
\Lie_{D(\vec A)}^n D(\vec B) \equiv \vec \Delta \mod (t),
$$
finishing the proof of Proposition~\ref{prop:prop1}. \qed

\subsection{Preparation for the proof of Proposition~\ref*{prop:prop2}}\label{ssec:prepprop2}

The proof of Proposition~\ref{prop:prop2} makes use of two computational
lemmas concerning Lie derivatives that are formulated and proved in the
current Section~\ref{ssec:prepprop2}. The actual proof of
Proposition~\ref{prop:prop2} is given in Section~\ref{sssec:3-21}.  To start,
recall the classical Jacobi identity, formulated in terms of Lie derivatives.

\begin{rem}[Jacobi identity]\label{rem:jacobi}
  Let $\vec X, \vec Y, \vec Z \in H^0(X,T_M)$ be vector fields on a complex
  manifold $M$. Written in terms of Lie derivatives rather than Lie brackets,
  the Jacobi identity asserts that
  \begin{equation}\label{eq:jacobi}
    \Lie_{[\vec X,\vec Y]}\vec Z = \Lie_{\vec X} \circ \Lie_{\vec Y} \vec
    Z - \Lie_{\vec Y} \circ \Lie_{\vec X}\vec Z.
  \end{equation}
\end{rem}

\begin{lem}\label{lem:hering}
  In the setup of Proposition~\ref{prop:prop2}, define
  \begin{align*}
    \vec R^2 & := [D(\vec A), D(\vec B)] = \Lie_{D(\vec A)} D(\vec B)  && \text{and inductively}\\
    \vec R^m & := [D(\vec A), \vec R^{m-1}] = \Lie_{D(\vec A)}^{m-1} D(\vec B)
    && \text{for $m>2$.}
  \end{align*}
  If $\vec Z\in H^0\bigl(X\times \bC,\, T_{X\times\bC}\bigr)$ is any vector
  field on the product $X\times\bC$ and $m\geq 2$ any number, then $\Lie_{\vec
    R^m} \vec Z$ can be expressed as a linear combination of terms $\vec T$ of
  the form
  $$
  \vec T = \Lie_{\vec F_1}\circ \cdots \circ \Lie_{\vec F_m} \bigl( \vec Z
  \bigr),
  $$
  where all $\vec F_i$ are either equal to $D(\vec A)$ or equal to $D(\vec
  B)$.
\end{lem}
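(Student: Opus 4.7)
The proof will proceed by induction on $m \geq 2$, the only ingredient being the Jacobi identity in its Lie-derivative form (Remark~\ref{rem:jacobi}).

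For the base case $m = 2$, we have $\vec R^2 = [D(\vec A), D(\vec B)]$, and applying \eqref{eq:jacobi} directly yields
$$
\Lie_{\vec R^2} \vec Z = \Lie_{D(\vec A)} \circ \Lie_{D(\vec B)} \vec Z - \Lie_{D(\vec B)} \circ \Lie_{D(\vec A)} \vec Z,
$$
which is manifestly a linear combination of the required form with $m = 2$ factors.

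For the inductive step, suppose the statement is known for $m-1$, i.e.\ for \emph{any} vector field $\vec W$ on $X \times \bC$, the expression $\Lie_{\vec R^{m-1}} \vec W$ can be written as a linear combination of $(m-1)$-fold compositions $\Lie_{\vec F_1} \circ \cdots \circ \Lie_{\vec F_{m-1}}(\vec W)$ with each $\vec F_i \in \{D(\vec A), D(\vec B)\}$. Since $\vec R^m = [D(\vec A), \vec R^{m-1}]$, the Jacobi identity gives
$$
\Lie_{\vec R^m} \vec Z = \Lie_{D(\vec A)} \bigl( \Lie_{\vec R^{m-1}} \vec Z \bigr) - \Lie_{\vec R^{m-1}} \bigl( \Lie_{D(\vec A)} \vec Z \bigr).
$$
The inductive hypothesis applied first to $\vec W = \vec Z$ and then to $\vec W = \Lie_{D(\vec A)} \vec Z$ converts each of the two summands into a linear combination of compositions of exactly $m$ Lie derivatives with respect to $D(\vec A)$ or $D(\vec B)$, applied to $\vec Z$. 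This is precisely the claimed form.

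The only subtlety worth flagging is that in the second summand we must apply the inductive hypothesis with input $\Lie_{D(\vec A)} \vec Z$, not with input $\vec Z$; this is legitimate because the inductive hypothesis is universally quantified over the vector field being differentiated. No further work is required, and the argument is genuinely short once the Jacobi identity is written in the Lie-derivative form.
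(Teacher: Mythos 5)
Your proof is correct and follows essentially the same route as the paper: induction on $m$, with the base case and inductive step both being direct applications of the Jacobi identity in its Lie-derivative form, and the inductive hypothesis applied both to $\vec Z$ and to $\Lie_{D(\vec A)}\vec Z$. Your explicit remark that the induction hypothesis must be quantified over arbitrary $\vec Z$ is a helpful clarification of a point the paper leaves implicit.
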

\begin{proof}
  We prove Lemma~\ref{lem:hering} by induction on $m$. If $m = 2$, then the
  Jacobi Identity~\eqref{eq:jacobi} asserts that
  $$
  \Lie_{\vec R^2} \vec Z = \Lie_{[D(\vec A), D(\vec B)]} \vec Z = \Lie_{D(\vec
    A)} \circ \Lie_{D(\vec B)} \vec Z - \Lie_{D(\vec B)} \circ \Lie_{D(\vec
    A)} \vec Z,
  $$
  showing the claim in case where $m=2$. For $m > 2$, the Jacobi identity
  gives
  $$
  \Lie_{\vec R^m} \vec Z = \Lie_{[D(\vec A), \vec R^{m-1}]} \vec Z =
  \Lie_{D(\vec A)} \circ \Lie_{\vec R^{m-1}} \vec Z - \Lie_{\vec R^{m-1}}\circ
  \Lie_{D(\vec A)} \vec Z,
  $$
  showing the claim inductively.
\end{proof}

\begin{lem}\label{lem:prop2}
  In the setup of Proposition~\ref{prop:prop2}, if $\vec F_1, \ldots, \vec
  F_n$ is a sequence of time-dependent vector fields such that all $\vec F_i$
  are either equal to $D(\vec A)$ or equal to $D(\vec B)$, then
  \begin{equation}\label{eq:lp2}
    \Bigl(\Lie_{\vec F_1}\circ \cdots \circ \Lie_{\vec F_n} \bigl( {\textstyle
      \frac{\partial}{\partial t}} \bigr) \Bigr)_0 \Bigl|_Y \in H^0 \bigl( Y,\,
    \sG \bigr).
  \end{equation}
\end{lem}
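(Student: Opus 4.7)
I will prove Lemma~\ref{lem:prop2} by induction on $n$, combining Remark~\ref{rem:aslj}, Fact~\ref{fact:tddiffjets}, the Jacobi identity (Remark~\ref{rem:jacobi}), and Lemma~\ref{lem:hering}. For the base case $n = 1$, Remark~\ref{rem:aslj} rewrites $\Lie_{\vec F_1}(\partial/\partial t) = -\Lie_{\vec F_1} \vec G_1$, where $\vec G_1 \in \{\vec A, \vec B\}$ is the horizontal part of $\vec F_1 = D(\vec G_1)$, and $1$-admissibility of $\vec G_1$ places the result in $H^0(Y, \sG)$. The pure case for arbitrary $n$ (all $\vec F_i$ equal to $D(\vec A)$, respectively to $D(\vec B)$) is handled identically, using $n$-admissibility.

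\textbf{Auxiliary identity.} For any $k \leq n-1$, any $\vec F_1, \ldots, \vec F_k \in \{D(\vec A), D(\vec B)\}$, and any $\vec G \in \{\vec A, \vec B\}$, Fact~\ref{fact:tddiffjets} applied with the horizontal parts of the $\vec F_i$ in the role of ``$\vec A_i$'' and $\vec G$ in the role of ``$\vec B$'' yields, using $\tau^n_{\vec A}|_Y = \tau^n_{\vec B}|_Y$,
\[
  \bigl( \Lie_{\vec F_1} \circ \cdots \circ \Lie_{\vec F_k} D(\vec G) \bigr)_0 \bigl|_Y = \tau^{k+1}_{\vec G} \bigl|_Y - \tau^{k+1}_{\vec G_k} \bigl|_Y = 0.
\]
Expanding $D(\vec G) = \partial/\partial t + \vec G$ gives
\[
  \bigl( \Lie_{\vec F_1} \circ \cdots \circ \Lie_{\vec F_k} \vec G \bigr)_0 \bigl|_Y = -\bigl( \Lie_{\vec F_1} \circ \cdots \circ \Lie_{\vec F_k} (\partial/\partial t) \bigr)_0 \bigl|_Y,
\]
so Lemma~\ref{lem:prop2} at lengths $\leq n-1$ yields the same conclusion with $\partial/\partial t$ replaced by any $\vec G \in \{\vec A, \vec B\}$ at the innermost slot.

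\textbf{Inductive step for mixed sequences.} Assume the statement for lengths less than $n$. For a mixed length-$n$ sequence, pick an adjacent pair $\vec F_i \neq \vec F_{i+1}$ and apply the Jacobi identity
\[
  \Lie_{\vec F_i} \circ \Lie_{\vec F_{i+1}} = \Lie_{\vec F_{i+1}} \circ \Lie_{\vec F_i} + \Lie_{[\vec F_i, \vec F_{i+1}]}.
\]
The first summand rearranges the sequence; the second involves $\Lie_{[D(\vec A), D(\vec B)]}$, which Lemma~\ref{lem:hering} (for $m=2$) expresses as a combination of length-two compositions of $\Lie_{D(\vec A)}$ and $\Lie_{D(\vec B)}$, returning the expression to the Lemma~\ref{lem:prop2} framework. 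Iterating this procedure systematically, together with the inductive hypothesis and the auxiliary identity above, reduces every mixed sequence to a sum of pure sequences plus correction terms all placed in $\sG$. At the outermost level one uses $([D(\vec A), D(\vec B)])_0|_Y = \vec B_1|_Y - \vec A_1|_Y + [\vec A_0, \vec B_0]|_Y$: here $\vec A_1|_Y, \vec B_1|_Y \in \sG$ by $1$-admissibility, and $[\vec A_0, \vec B_0]|_Y \in \sG$ by the obstruction sheaf property applied to $\vec A_0|_Y = \vec B_0|_Y$.

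\textbf{Main obstacle.} The chief technical difficulty is the bookkeeping of Jacobi-swap corrections, because expanding $\Lie_{[D(\vec A), D(\vec B)]}$ via Lemma~\ref{lem:hering} produces two length-$n$ sequences, one of which matches the original configuration at the swapped positions. A naïve inversion count therefore does not strictly decrease, and guaranteeing termination requires a more refined combinatorial measure (for instance, a lexicographic pairing of the inversion count with a weight counting nested commutators). Once termination is established, the auxiliary identity, the pure-case analysis, and the obstruction sheaf property combine to place all remaining contributions in $\sG$, completing the proof.
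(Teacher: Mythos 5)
Your overall architecture (handle the pure cases by admissibility via Remark~\ref{rem:aslj}, then reduce mixed sequences to pure ones by commuting adjacent operators with the Jacobi identity) matches the paper's proof, and your auxiliary identity is essentially the paper's Claim~\ref{sclaim:2} in disguise. But the heart of the lemma is precisely the step you leave open: showing that each Jacobi correction term
$$
\vec T = \Lie_{\vec F_1}\circ\cdots\circ\Lie_{\vec F_{i-1}}\circ\Lie_{[\vec F_i,\vec F_{i+1}]}\circ\Lie_{\vec F_{i+2}}\circ\cdots\circ\Lie_{\vec F_n}\bigl({\textstyle\frac{\partial}{\partial t}}\bigr)
$$
restricts to a section of $\sG$ at time $t=0$. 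Your proposal to handle this by re-expanding $\Lie_{[D(\vec A),D(\vec B)]}$ via Lemma~\ref{lem:hering} is circular: that expansion is exactly the Jacobi identity run backwards, so it returns the original length-$n$ word plus its swap, and no measure decreases. You acknowledge the termination problem but do not exhibit a well-founded measure, and your ``outermost level'' computation of $\bigl([D(\vec A),D(\vec B)]\bigr)_0\big|_Y$ only treats the bare commutator with no further Lie derivatives applied to it, which is not the configuration that actually occurs inside a length-$n$ word. As written, the argument does not close.

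The paper resolves this correction term by a direct computation rather than by further commutation (its Claim~\ref{sclaim:1}): one rewrites $\vec T$ as a finite sum $\sum_\alpha [\vec P^\alpha,\vec T^\alpha]$, where $\vec P^\alpha$ is an iterated Lie derivative of length at most $n-1$ ending in $\Lie_{\vec F_i}(\vec F_{i+1})$ and $\vec T^\alpha$ is the complementary iterated Lie derivative applied to $\frac{\partial}{\partial t}$. By Fact~\ref{fact:tddiffjets} and the hypothesis $\tau^n_{\vec A}|_Y=\tau^n_{\vec B}|_Y$, each $(\vec P^\alpha)_0$ vanishes along $Y$; by Lie-closedness of $\sF$, each $(\vec T^\alpha)_0$ is a section of $\sF$. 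Hence $[\vec P^\alpha_0,\vec T^\alpha_0]=[\vec P^\alpha_0+\vec T^\alpha_0,\vec T^\alpha_0]$ is the bracket of two sections of $\sF$ agreeing along $Y$, and Definition~\ref{def:obstrShf} places it in $\sG$. This use of the obstruction-sheaf property on the \emph{nested} commutators --- not just on $[\vec A_0,\vec B_0]$ --- is the missing ingredient in your proposal; without it (or a genuinely terminating substitute), the inductive step for mixed sequences is not established.
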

\begin{proof}
  If all $\vec F_i$ are equal to $D(\vec A)$ or all $\vec F_i$ are equal to
  $D(\vec B)$, then the statement follows from Remark~\ref{rem:aslj}. We can
  thus assume without loss of generality that $\vec F_n = D(\vec B)$, and that
  at least one of the $\vec F_i$, for $i<n$, is equal to $D(\vec A)$.

  As a first step in the proof of Lemma~\ref{lem:prop2}, we show the following
  claim.

  \begin{subclaim}\label{sclaim:1}
    Let $1 \leq i \leq n-2$ be any number, and consider the vector
    field\footnote{The equality in~\eqref{eq:xxy} is again Jacobi's identity
      in the form of Remark~\ref{rem:jacobi}.}
    \begin{equation}\label{eq:xxy}
      \begin{split}
        \vec T & := \Lie_{\vec F_1} \circ \cdots \circ \Lie_{\vec F_{i-1}}
        \circ \Lie_{[\vec F_i,\vec F_{i+1}]} \circ \Lie_{\vec F_{i+2}} \circ
        \cdots \circ \Lie_{\vec F_n}\bigl({\textstyle \frac{\partial}{\partial
            t}}
        \bigr) \\
        & \,\,= \Lie_{\vec F_1} \circ \cdots \circ \Lie_{\vec F_{i-1}} \circ
        \Lie_{\vec F_i} \circ \Lie_{\vec F_{i+1}} \circ \Lie_{\vec F_{i+2}}
        \circ \cdots \circ \Lie_{\vec F_n}\bigl({\textstyle
          \frac{\partial}{\partial t}}
        \bigr) - \\
        & \qquad \Lie_{\vec F_1} \circ \cdots \circ \Lie_{\vec F_{i-1}} \circ
        \Lie_{\vec F_{i+1}} \circ \Lie_{\vec F_i} \circ \Lie_{\vec F_{i+2}}
        \circ \cdots \circ \Lie_{\vec F_n}\bigl({\textstyle
          \frac{\partial}{\partial t}} \bigr).
    \end{split}
    \end{equation}
    Then $\vec T_0\big|_Y \in H^0 \bigl( Y,\, \sG \bigr)$.
  \end{subclaim}

  Using a somewhat tedious inductive argument, which we leave to the reader,
  one verifies that the vector field $\vec T$ can be expressed as follows,
  $$
  \vec T := \sum_{\txt{\tiny $\alpha$ a subsequence\\\tiny of $( 1, 2, \ldots,
      i-1 )$}}[\vec P^\alpha,\,\vec T^\alpha]
  $$
  where the subsequence $\alpha$ is written as, $\alpha = \bigl( \alpha(1),
  \ldots, \alpha(k) \bigr)$, where $\overline \alpha = \bigl( \overline
  \alpha(1), \ldots, \overline \alpha(i-1-k) \bigr)$ denotes the complementary
  subsequence, and where
  \begin{align*}
    \vec P^\alpha & := \Lie_{\vec F_{\alpha(1)}}\circ\cdots\circ\Lie_{\vec
      F_{\alpha(k)}}\circ\Lie_{\vec F_i}\bigl(\vec F_{i+1}\bigr) & \text{and}\\
    \vec T^\alpha & := \Lie_{\vec F_{\overline\alpha(1)}} \circ \cdots \circ
    \Lie_{\vec F_{\overline\alpha(l)}} \circ \Lie_{\vec F_{i+2}} \circ \cdots
    \circ \Lie_{\vec F_n}\bigl({\textstyle \frac{\partial}{\partial t}} \bigr).
  \end{align*}
  There are two things to note in this setting.
  \begin{enumerate}
  \item Since $\sF$ is closed under Lie-bracket, the restriction to time $t=0$
    of the vector field $\vec T^\alpha$ is a section of $\sF$, that is,
    $\bigl( \vec T^\alpha \bigr)_0 \in H^0 \bigl( X,\, \sF \bigr)$, for all
    subsequences $\alpha$.
  \item The vector field $(\vec P^{\alpha})_0 \in H^0 \bigl(X, \, \sF \bigr)$
    is the restriction to time $t=0$ of an iterated Lie bracket of at most
    $(n-1)$ time-dependent vector fields in $\sF$, all inducing the same
    $(n-1)$-st order jets on $Y \subseteq X$.  Fact~\ref{fact:tddiffjets}
    therefore implies that $(\vec P^\alpha)_0$ vanishes on $Y$, again for all
    subsequences $\alpha$.
  \end{enumerate}
  Consequently, the restriction of $\vec T_0$ to $Y$ satisfies
  $$
  \vec T_0|_Y = \sum_\alpha \,\, [\vec P^\alpha_0, \vec T^\alpha_0] =
  \sum_\alpha \,\, [\vec P^\alpha_0+\vec T^\alpha_0, \vec T^\alpha_0] \in
  H^0\bigl( Y,\, \sG \bigr),
  $$
  by Definition~\ref{def:obstrShf}.  This ends the proof of
  Claim~\ref{sclaim:1}.
  
  \medskip
  
  Claim~\ref{sclaim:1} asserts that Equation~\eqref{eq:lp2} holds if and only
  if it holds after permuting the operators $ \Lie_{\vec F_i}$ and $\Lie_{\vec
    F_{i+1}}$. In other words, Equation~\eqref{eq:lp2} holds if and only if
  $$
  \Bigl(\Lie_{\vec F_1}\circ \cdots \circ \Lie_{\vec F_{i+1}} \circ \Lie_{\vec
    F_i} \circ \cdots \circ \Lie_{\vec F_n} \bigl( {\textstyle
    \frac{\partial}{\partial t}} \bigr) \Bigr)_0 \Bigl|_Y \in H^0 \bigl( Y,\,
  \sG \bigr).
  $$
  Using the classical ``bubble-sort'' algorithm, we can therefore sort the
  $\vec F_i$ and assume without loss of generality that there exists a number
  $k$ such that $\vec F_1, \vec F_2, \ldots, \vec F_k$ are all equal to
  $D(\vec A)$, whereas $\vec F_{k+1}, \ldots, \vec F_n$ are all equal to
  $D(\vec B)$.  In this situation an argument similar to, but easier than the
  proof of Claim~\ref{sclaim:2} then shows the following.

  \begin{subclaim}\label{sclaim:2}
    If $\vec S$ denotes the following vector field,
    $$
    \vec S := \Lie_{\vec F_1} \circ \cdots \circ \Lie_{\vec
      F_n}\bigl({\textstyle \frac{\partial}{\partial t}} \bigr) - \Lie_{D(\vec
      B)} \circ \Lie_{\vec F_2}\circ \cdots \circ \Lie_{\vec
      F_n}\bigl({\textstyle \frac{\partial}{\partial t}} \bigr),
    $$
    then $\vec S_0|_Y \in H^0 \bigl( Y,\, \sG \bigr)$. 
  \end{subclaim}
  
  The proof of Claim~\ref{sclaim:2} is left to the reader.
  Claim~\ref{sclaim:2} allows to replace $\vec F_1 = D(\vec A)$ by $D(\vec
  B)$. Applying the 'Sorting Claim~\ref{sclaim:1}' and the 'Replacement
  Claim~\ref{sclaim:2}' exactly $k$ times, this reduces the
  Lemma~\ref{lem:prop2} to the case where $\vec F_1 = \cdots = \vec F_n =
  D(\vec B)$, where the lemma is known to be true, thus finishing the proof of
  Lemma~\ref{lem:prop2}.
\end{proof}

\subsection{Proof of Proposition~\ref*{prop:prop2}}
\label{sssec:3-21}

By Fact~\ref{fact:tddiffjets}, the time-independent vector field
$$
\vec \Delta := \bigl( \Lie_{D(\vec A)}^n D(\vec B) \bigr)_{0} \in H^0
\bigl(X,\,T_X \bigr)
$$
at time $t=0$ describes the difference between the $(n+1)$-st order jets
$\tau^{n+1}_{\vec A}$ and $\tau^{n+1}_{\vec B}$ along $Y \subseteq X$. We need
to show that the restriction $\vec \Delta|_Y$ is a section of the obstruction
sheaf $\sG$. For simplicity of argument, we discuss the cases $n=1$ and $n>1$
separately in the next two subsections.

\subsubsection*{Proof in case $n = 1$}

If $n=1$, expand the definition of $\vec \Delta$ and of $D(\vec A)$ and
$D(\vec B)$ to obtain
$$
\vec \Delta\big|_Y = \bigl(\Lie_{D(\vec A)} D(\vec B)\bigr)_0\big|_Y =
\bigl(\Lie_{\vec A} \vec B\bigr)_0\big|_Y +
\underbrace{\bigl(\Lie_{\frac{\partial}{\partial t}} \vec
  B\bigr)_0\big|_Y}_{\in \sG \text{ by Rem.~\ref{rem:aslj}}} +
\underbrace{\bigl(\Lie_{\vec A} {\textstyle \frac{\partial}{\partial
      t}}\bigr)_0\big|_Y}_{\in \sG \text{ by Rem.~\ref{rem:aslj}}}.
$$
To conclude, it thus suffices to show that
$$
\bigl(\Lie_{\vec A} \vec B\bigr)_0\big|_Y = \bigl(\Lie_{\vec A_0} \vec B_0
\bigr)\big|_Y \in H^0 \bigl( Y,\, \sG \bigr).
$$
This, however, follows from the assumption that $\vec A$ and $\vec B$ be
admissible vector fields, so that $\vec A_0, \vec B_0 \in H^0 \bigl(X,\, \sF
\bigr)$, and from Definition~\ref{def:obstrShf} of \emph{obstruction
  sheaf}. This proves Proposition~\ref{prop:prop2} in case where $n=1$. \qed

\subsubsection*{Proof in case $n > 1$}

Consider the time-dependent vector field
$$
\vec R := \Lie_{D(\vec A)}^{n-1} D(\vec B).
$$
Fact~\ref{fact:tddiffjets} and the assumed equality of $n$-th order jets imply
that $\vec R_0|_{Y} = 0$. As one consequence, we obtain that
$$
\bigl( L_{\vec R_0}\vec A_0 \bigr)\big|_Y = \bigl( L_{\vec R_0+\vec A_0}
\vec A_0 \bigr)\big|_Y \in H^0\bigl( Y,\, \sG \bigr),
$$
according to Definition~\ref{def:obstrShf}, where obstruction sheaves were
introduced. The equation
$$
\vec \Delta\big|_Y = \bigl( \Lie_{D(\vec A)}\vec R \bigr)_{0}\big|_Y =
- \bigl( \Lie_{\vec R}D(\vec A) \bigr)_{0} \big|_Y =
\underbrace{-\Lie_{\vec R_0} \bigl( \vec A_0 \bigr)\big|_Y}_{\in \sG}
- \bigl( \Lie_{\vec R} {\textstyle \frac{\partial}{\partial t}}
\bigr)_0\big|_Y
$$
thus asserts that to prove Proposition~\ref{prop:prop2}, it suffices to show
that
\begin{equation}\label{eq:cprp}
  \bigl(\Lie_{\vec R} {\textstyle \frac{\partial}{\partial t}}
  \bigr)_0 \big|_Y\in H^0 \bigl( Y, \, \sG \bigr).
\end{equation}
To this end, recall from Lemma~\ref{lem:hering} that $\Lie_{\vec R}
\frac{\partial}{\partial t}$ can be expressed as a linear combination of terms
$\vec T$ of the form
$$
\vec T = \Lie_{\vec F_1}\circ \cdots \circ \Lie_{\vec F_n} \bigl(
{\textstyle \frac{\partial}{\partial t}} \bigr)
$$
with $\vec F_i\in\{D(\vec A),D(\vec B)\}$ for $1\leq i\leq n$.
Lemma~\ref{lem:prop2} then asserts that every one of these terms is contained
in $H^0\bigl( Y,\, \sG \bigr)$ when restricted at time $t=0$ to $Y$, thus
finishing the proof of Proposition~\ref{prop:prop2} in case where $n >
1$. \qed

\section{Admissible higher-order infinitesimal deformations}
\label{ssec:higherorderadm}

In this section, we generalize the notion of admissibility to jets of
arbitrary order. We employ Proposition~\ref{prop:prop1} and~\ref{prop:prop2}
to show that - under a suitable cohomology vanishing assumption - first-order
infinitesimal deformations that are locally induced by admissible vector
fields can always be lifted to \emph{admissible} infinitesimal deformations of
arbitrary order. We maintain the assumptions spelled out in
Setup~\ref{setup:71} on page \pageref{setup:71}.

\begin{defn}[Admissible higher-order infinitesimal deformations]\label{defn:admissiblesection}
  In Setup~\ref{setup:71}, let $n\geq 1$. A section $\tau^n:Y\to \Jet^n(X)$
  over $f$ is said to be \emph{admissible for the obstruction sheaf $\sG$}, if
  there exists a cover of $Y \subset X$ by open subsets, say $Y \subseteq
  \bigcup_{j \in J} X_j^\circ$, and time-dependent vector fields $\vec A_j$ on
  $X_j^\circ$ for any $j \in J$ such that the following two conditions hold
  true for every $j\in J$.
  \begin{enumerate}
  \item\label{antje} The vector field $\vec A_j$ is $n$-admissible for the
    obstruction sheaf $\sG|_{Y_i^\circ}$, where $Y_j^\circ :=Y \cap
    X_j^\circ$.
  \item\label{marina} The restriction of $\tau^n$ to $Y_j^\circ$ is induced by
    $\vec A_j$. In other words, $\tau^n\big|_{Y_j^\circ}=\tau^n_{\vec
      A_j}\big|_{Y_j^\circ}$.
  \end{enumerate}
\end{defn}

\begin{rem}
  Recall from Remark~\ref{rem:hfareadmiss} that time-independent vector fields
  in $\sF$ are always admissible.  In Setup~\ref{setup:71}, any infinitesimal
  deformation $\sigma \in H^0\bigl( Y,\, \sF_Y \bigr)$ locally induced by
  $\sF$ is therefore admissible for the obstruction sheaf $\sG$ in the sense
  of Definition~\ref{defn:admissiblesection} above.
\end{rem}

\begin{prop}[Lifting admissible sections to arbitrary order]\label{prop:liftingproperty}
  In Setup~\ref{setup:71}, suppose that $H^1\bigl(Y,\,\sG\bigr)=0$. Then any
  section $\tau^n:Y\to \Jet^n(X)$ over $f$ that is admissible for $\sG$ admits
  a lift to a section $\tau^{n+1}:Y\to \Jet^{n+1}(X)$ that is likewise
  admissible for $\sG$.
\end{prop}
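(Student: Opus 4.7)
The plan is a standard Čech cohomology obstruction argument: use Proposition~\ref{prop:prop1} to produce local $(n+1)$-st order extensions, use Proposition~\ref{prop:prop2} to see that the obstruction to gluing lives in $\sG$, and cancel that obstruction using the vanishing $H^1(Y,\sG)=0$. By admissibility of $\tau^n$, fix a cover $Y \subseteq \bigcup_{j\in J} X_j^\circ$ together with $n$-admissible vector fields $\vec A_j$ on $X_j^\circ$ such that $\tau^n|_{Y_j^\circ} = \tau^n_{\vec A_j}|_{Y_j^\circ}$, where $Y_j^\circ := Y \cap X_j^\circ$. Applying Proposition~\ref{prop:prop1} with $\vec \Delta = 0$ on each patch produces $(n+1)$-admissible vector fields $\vec B_j$ with $\tau^n_{\vec B_j}|_{Y_j^\circ} = \tau^n|_{Y_j^\circ}$ and the same $(n+1)$-st order jet on $Y_j^\circ$ as $\vec A_j$. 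This yields candidate local lifts $\sigma_j := \tau^{n+1}_{\vec B_j}|_{Y_j^\circ}$.

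On overlaps $Y_{jk}^\circ := Y_j^\circ \cap Y_k^\circ$, form the differences $\delta_{jk} := \sigma_k - \sigma_j$, which via the affine-bundle structure of $\Jet^{n+1}(X) \to \Jet^n(X)$ take values in $f^*T_X$. Because the vector fields $\vec B_j$ and $\vec B_k$ are both $n$-admissible for $\sG$ and induce the same $n$-th order jet along $Y_{jk}^\circ$, Proposition~\ref{prop:prop2} refines this to $\delta_{jk} \in H^0(Y_{jk}^\circ,\, \sG)$. A direct check shows that $\{\delta_{jk}\}$ is a Čech 1-cocycle for the sheaf $\sG$ on the cover $\{Y_j^\circ\}$, representing the obstruction to gluing the $\sigma_j$ into a global section.

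The vanishing $H^1(Y,\sG) = 0$ now gives, after possibly refining the cover, sections $\delta_j \in H^0(Y_j^\circ, \sG)$ with $\delta_{jk} = \delta_j - \delta_k$. Since $\sG \subseteq \sF_Y = \Image(f^{-1}\sF \to f^*T_X)$, a further shrinking of the cover allows us to lift each $\delta_j$ to a time-independent vector field $\vec \Delta_j \in H^0(X_j^\circ, \sF)$ with $\vec \Delta_j|_{Y_j^\circ} = \delta_j$. A second application of Proposition~\ref{prop:prop1}, now to the pair $(\vec A_j, \vec \Delta_j)$, produces $(n+1)$-admissible vector fields $\vec C_j$ with $\tau^n_{\vec C_j}|_{Y_j^\circ} = \tau^n|_{Y_j^\circ}$ and $\tau^{n+1}_{\vec C_j}|_{Y_j^\circ} = \sigma_j + \delta_j$. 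By the coboundary relation $\sigma_k + \delta_k = \sigma_j + \delta_j$ on overlaps, so the local sections $\tau^{n+1}_{\vec C_j}|_{Y_j^\circ}$ glue to a global section $\tau^{n+1} : Y \to \Jet^{n+1}(X)$ admissible for $\sG$, witnessed by the $\vec C_j$. The main subtle point is realizing the abstract cocycle correction $\delta_j \in H^0(Y_j^\circ, \sG)$ by a concrete time-independent vector field $\vec \Delta_j \in \sF$ on an open subset of $X$, which is precisely what lets us feed it back into Proposition~\ref{prop:prop1} and preserve admissibility; this is enabled by the very definition $\sF_Y = \Image(f^{-1}\sF \to f^*T_X)$.
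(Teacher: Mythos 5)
Your proposal is correct and follows essentially the same route as the paper: form the \v{C}ech $1$-cocycle of jet differences via Proposition~\ref{prop:prop2}, kill it using $H^1(Y,\sG)=0$, lift the correcting sections of $\sG$ locally to time-independent fields in $\sF$, and feed them into Proposition~\ref{prop:prop1} to obtain $(n+1)$-admissible local witnesses that glue. The only difference is your preliminary application of Proposition~\ref{prop:prop1} with $\vec\Delta=0$ to upgrade the $\vec A_j$ to $(n+1)$-admissible fields before forming the cocycle; this is harmless but unnecessary, since Proposition~\ref{prop:prop2} already applies to the $n$-admissible $\vec A_j$ directly.
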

\begin{proof}
  Fix an open cover $Y \subseteq \bigcup_{j\in J} X_j^\circ$ and
  $n$-admissible vector fields $\vec A_j$ as in
  Definition~\ref{defn:admissiblesection}. We write $Y_j^\circ := X_j^\circ
  \cap Y$, $X_{jk}^\circ := X_j^\circ \cap X_k^\circ$ and $Y_{jk}^\circ :=
  Y_j^\circ \cap Y_k^\circ$ for $j,k\in J$.
  
  Since $\tau^{n}_{\vec A_j}\big|_{Y_{jk}^\circ} = \tau^{n}_{\vec
    A_k}\big|_{Y_{jk}^\circ}$ for $j,k\in J$ by Item~(\ref{marina}) in
  Assumptions~\ref{defn:admissiblesection}, we may calculate the difference
  between $(n+1)$-st order jets. By Assumption~(\ref{antje}) and by
  Proposition~\ref{prop:prop2}, this difference is described by a section in
  the obstruction sheaf $\sG$, that is,
  \begin{equation}\label{eq:cocycle}
    \vec C_{jk}:=\tau^{n+1}_{\vec A_j}\big|_{Y_{jk}^\circ} -
    \tau^{n+1}_{\vec A_k}\big|_{Y_{jk}^\circ} \in \sG(Y_{jk}^\circ)
  \end{equation}
  The general properties of affine bundles imply that the family $(\vec
  C_{jk})_{jk}$ is a \v{C}ech $1$-cocycle of the sheaf $\sG$ with respect to
  the open cover $Y = \bigcup_j Y_j^\circ$. The cohomology vanishing
  assumption ensures that $(\vec C_{jk})_{jk}$ is a \v{C}ech $1$-coboundary,
  that is, that there exist sections $\vec C_j\in \sG(Y_j^\circ)$ such that
  \begin{equation}\label{eq:coboundary}
    \vec C_{jk} = \vec C_k\big|_{Y_{jk}^\circ} - \vec C_j\big|_{Y_{jk}^\circ} 
    \in \sG(Y_{jk}^\circ).
  \end{equation}
  Taking~(\ref{eq:cocycle}) and~(\ref{eq:coboundary}) together, the sections
  \begin{equation}\label{eq:lift}
    \tau^{n+1}_{\vec A_j}\big|_{Y_j^\circ} + \vec C_j: Y_i^\circ \to \Jet^{n+1}(X),
  \end{equation}
  with $j$ running over the index set $J$, glue, giving a section $\tau^{n+1}
  : Y \to \Jet^{n+1}(X)$ over $\tau^n$.

  Refining the open cover, Equation~(\ref{eq:lift}) and
  Proposition~\ref{prop:prop1} allow to assume that $\vec C_j$ is induced by a
  vector field on $X_j^\circ$ which is $(n+1)$-admissible for
  $\sG|_{Y_j^\circ}$, as required.
\end{proof}

\section{Proof of Theorems~\ref*{thm:main} and \ref*{thm:mainaddon}}
\label{sec:mainproof}

We maintain assumptions and notation of Theorems~\ref{thm:main} and
\ref{thm:mainaddon}, where $X$ is a complex manifold, $\sF \subset T_X$ a
Lie-closed subsheaf of $\bC_X$-modules, where $Y\subset X$ is a reduced,
compact complex subspace with inclusion map $f : Y \to X$, and $\sG$ an
obstruction sheaf for $\sF$.  Let $\sigma\in H^0\bigl(Y,\,\sF_Y\bigr)$ be an
infinitesimal deformation of $f$ that is locally induced by $\sF$, and assume
that $H^1 \bigl( Y,\, \sG \bigr) = 0$.

\subsection{Proof of the Theorem~\ref*{thm:main}}
\label{ssec:proofmain}

By Proposition~\ref{prop:liftingproperty}, the section $\tau^1:Y\to \Jet^1(X)$
corresponding to the first order deformation $\sigma\in
H^0\bigl(Y,\,\sF_Y\bigr)$ of $f$ inductively admits lifts to sections
$$
\tau^n : Y \to \Jet^n(X)
$$
for any natural number $n$. The family $(\tau^n)_{n \in \bN}$ corresponds to a
formal curve
\begin{equation}\label{eq:formal}
  \Spec\,\bC[[t]] \to \Hom(Y,X)  
\end{equation}
at $[f]\in \Hom(Y,X)$ whose associated Zariski tangent vector equals
$\sigma$. The existence of a holomorphic curve at $[f]\in \Hom(Y,X)$ with
derivative $\sigma$ then follows from a classical result of Michael Artin,
\cite[Thm.~1.2]{ARTIN68}. \qed

\subsection{Proof of Theorem~\ref*{thm:mainaddon}}

If $H \subseteq \Hom(Y,X)$ is a subspace of deformations along $\sF$, then the
formal curve~\eqref{eq:formal} factors through $H$,
$$
\Spec\, \bC[[t]] \to H \subseteq \Hom(Y,X).
$$
In particular, the holomorphic curve at $[f] \in \Hom(Y,X)$ given by
\cite{ARTIN68} can be required to lie in $H$. \qed

\section{Example: Embeddings into complex-symplectic manifolds}
\label{sec:examples}

In this section, we apply Theorem~\ref{thm:main} to embedding morphisms into
complex-symplectic manifolds. The following assumptions will be maintained
throughout the present section.

\begin{setup}\label{setup:sympl}
  Let $(X,\omega)$ be a complex-symplectic manifold, let $\sF\subset T_X$ be
  the subsheaf of Hamiltonian vector fields, and $f:Y\to X$ the inclusion of a
  compact submanifold.
\end{setup}

\subsection{Infinitesimal deformations locally induced by Hamiltonian  vector fields}

We start off with a discussion of the sheaf of infinitesimal deformations
locally induced by $\sF$. Perhaps somewhat surprisingly, it will turn out that
\emph{all} infinitesimal deformations of $f$ are locally induced by $\sF$, as
long as $Y$ is either a curve, surface, or a Kähler manifold.

\begin{notation}
  If $U \subseteq Y$ is any open set, and $\vec A \in \bigl(f^*T_X\bigr)(U)$,
  consider the associated section $\eta_{\vec A} := \bigl(f^*\omega\bigr)(\vec
  A, \cdot) \in f^*\Omega^1_X$ and the associated form $\xi_{\vec A} :=
  (df)(\eta_{\vec A}) \in \Omega^1_Y$.
\end{notation}

\begin{prop}[Infinitesimal deformations locally induced by Hamiltonians]\label{prop:idliH}
  In Setup~\ref{setup:sympl}, let $\vec A \in H^0\bigl( Y,\, f^*T_X \bigr)$ be
  an infinitesimal deformation of $f$. If $\xi_{\vec A}$ is closed, then $\vec
  A$ is locally induced by $\sF$.
\end{prop}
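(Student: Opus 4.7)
The proposition is local on $Y$, so fix a point $y_0 \in Y$ and aim to construct on some open neighbourhood $U\subset X$ of $y_0$ a holomorphic function $H$ whose associated Hamiltonian vector field $\vec A_H$, defined by $\iota_{\vec A_H}\omega = dH$, satisfies $f^*\vec A_H = \vec A$ on $U\cap Y$. Using the non-degeneracy of $\omega$ to pass between vector fields and $1$-forms, this condition is equivalent to the purely differential requirement
\[
   dH\big|_{U\cap Y} \;=\; \eta_{\vec A} \qquad \text{in } f^*\Omega^1_X.
\]
Thus the proposition reduces to a local extension-with-prescribed-derivative problem for the function $H$.

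To carry this out, I would choose adapted holomorphic coordinates $(x_1,\ldots,x_n,z_1,\ldots,z_m)$ on a polydisc $U$ around $y_0$ such that $Y\cap U = \{z_1=\cdots=z_m=0\}$, and write
\[
   \eta_{\vec A}\big|_{U\cap Y} \;=\; \sum_{i=1}^n a_i(x)\, dx_i\big|_Y \;+\; \sum_{j=1}^m b_j(x)\, dz_j\big|_Y.
\]
In these coordinates the tangential form $\xi_{\vec A}$ is precisely $\sum_i a_i(x)\, dx_i$, viewed as a $1$-form on the polydisc $Y\cap U$. Since $\xi_{\vec A}$ is closed by hypothesis, the holomorphic Poincaré lemma yields a primitive $H_0(x)$ with $dH_0 = \xi_{\vec A}$. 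Setting
\[
   H(x,z) \;:=\; H_0(x) + \sum_{j=1}^m b_j(x)\, z_j \;\in\; \sO_X(U),
\]
a short direct computation of partial derivatives at $z = 0$ then confirms $dH|_{U\cap Y} = \eta_{\vec A}$. The induced Hamiltonian vector field $\vec A_H \in \sF(U)$ is the sought local lift of $\vec A$, and since $y_0 \in Y$ was arbitrary, $\vec A$ is locally induced by $\sF$.

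The only genuine obstacle in this construction is the production of the tangential primitive $H_0$; the normal components $b_j$ cause no trouble, as the linear-in-$z$ correction $\sum_j b_j(x)\, z_j$ can always be prescribed to match the normal derivatives of $H$ along $Y$. The closedness hypothesis $d\xi_{\vec A}=0$ is therefore exactly what is needed to invoke the holomorphic Poincaré lemma and complete the argument, and this explains why it appears as the only assumption beyond the general Setup~\ref{setup:sympl}.
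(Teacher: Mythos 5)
Your proposal is correct and follows essentially the same route as the paper: decompose $\eta_{\vec A}$ along $Y$ into a tangential part (which is $\xi_{\vec A}$, integrated via the holomorphic Poincar\'e lemma) and normal components (absorbed by a correction linear in the normal coordinates), then take the Hamiltonian vector field of the resulting function. The paper's function $G = g + \sum_i g_i(y)\,x_i$ is exactly your $H = H_0 + \sum_j b_j(x)\,z_j$ up to relabelling of coordinates.
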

\begin{proof}
  The question being local on $Y$, we can assume that there are coordinates
  $x_1, \ldots, x_n, y_1, \ldots, y_m$ on $X$ such that the submanifold $Y$ is
  given as $Y = \{ x_1 = \cdots = x_n = 0\}$.  If $\xi_{\vec A}$ is closed, it
  can locally be written as $\xi_{\vec A} = dg$, where $g\in \sO_Y$ is a
  suitable holomorphic function. The section $\eta_{\vec A}$ is then written
  as
  $$
  \eta_{\vec A} = dg + \sum_{i=1}^n g_i(y_1, \ldots, y_m) \cdot dx_i
  $$
  where again $g_i \in \sO_Y$ are suitable holomorphic functions.  Consider
  the function
  $$
  G := g + \sum_{i=1}^n g_i(y_1, \ldots, y_m) x_i,
  $$
  defined on a neighborhood of $Y$ in $X$. To finish the proof, observe that
  the image of its associated Hamiltonian vector field $\Hamilton(G)$ in $f^*
  T_X$ agrees with $\vec A$.
\end{proof}

\begin{cor}[All infinitesimal deformations are locally induced by Hamiltonians]\label{cor:idliH}
  If $Y$ is Kähler or if $\dim Y < 3$, then any infinitesimal deformation of
  $f$ is locally induced by $\sF$. In other words,
  $$
  H^0\big( Y,\, \Image \sF_Y \to f^*T_X \bigr) = H^0\bigl( Y,\, f^*
  T_X \bigr).
  $$
\end{cor}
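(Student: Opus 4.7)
\medskip

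\noindent\textbf{Proof proposal for Corollary~\ref{cor:idliH}.} The plan is to reduce everything to Proposition~\ref{prop:idliH}: since membership in $\sF_Y$ is a stalkwise condition, it suffices to show that for every $\vec A \in H^0\bigl(Y,\, f^*T_X\bigr)$, the associated holomorphic $1$-form
$$
\xi_{\vec A} = (df)\bigl( (f^*\omega)(\vec A, \cdot) \bigr) \in H^0\bigl(Y,\, \Omega^1_Y\bigr)
$$
is $d$-closed. Once closedness is established, Proposition~\ref{prop:idliH} produces, locally around each point of $Y$, a Hamiltonian vector field in $\sF$ whose image in $f^*T_X$ agrees with $\vec A$, which is precisely the claim that $\vec A \in H^0\bigl(Y,\, \sF_Y\bigr)$. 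The reverse inclusion is tautological.

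To check closedness of $\xi_{\vec A}$, observe first that since $\xi_{\vec A}$ is holomorphic we have $\bar\partial \xi_{\vec A} = 0$, so $d \xi_{\vec A} = \partial \xi_{\vec A}$ is a holomorphic $2$-form on $Y$. I would then split into cases. If $\dim Y \leq 1$ then $\Omega^2_Y = 0$ and there is nothing to prove. If $Y$ is compact Kähler, classical Hodge theory on compact Kähler manifolds implies that every holomorphic form is harmonic and hence $d$-closed, so $d\xi_{\vec A} = 0$. The remaining case is $\dim Y = 2$, which I would dispose of by invoking the classical theorem, due to Kodaira, that every holomorphic $1$-form on a compact complex surface is $d$-closed (see e.g.\ Barth--Hulek--Peters--Van de Ven, \emph{Compact Complex Surfaces}, Thm.~IV.2.4); equivalently, the Frölicher spectral sequence degenerates at $E_1$ for compact complex surfaces.

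The main obstacle is the surface case: unlike the curve case it is not a formal dimension-count, and unlike the Kähler case one cannot appeal to the standard Kähler identities. One must either cite the structure theory of compact complex surfaces or reproduce the Hodge-theoretic argument that relies on the equality $b_1 = h^{1,0} + h^{0,1}$ for compact complex surfaces. Everything else in the proof is formal: once this closedness is in hand, the corollary is immediate from Proposition~\ref{prop:idliH}.
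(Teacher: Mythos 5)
Your proposal is correct and follows exactly the route of the paper's own proof: reduce to Proposition~\ref{prop:idliH} and invoke the classical fact that holomorphic $1$-forms on compact curves, surfaces, and K\"ahler manifolds are $d$-closed (the paper cites \cite[Chapt.~IV.2]{HBPV} and \cite[Thm.~8.28]{Voisin-Hodge1} for precisely these facts). Your additional commentary on why the surface case is the genuinely non-formal one is accurate but not needed beyond the citation.
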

\begin{proof}
  The proof follows immediately from Proposition~\ref{prop:idliH} and from the
  fact that any holomorphic 1-form on a compact curve, surface or Kähler
  manifolds is automatically closed, cf.~\cite[Chapt.~IV.2]{HBPV} and
  \cite[Thm.~8.28]{Voisin-Hodge1}.
\end{proof}

\subsection{Obstruction sheaf in the symplectic setup}

Next, we show that in the symplectic setting, there exists an obstruction
sheaf that can be understood geometrically. We recall the notation of vector
fields that are perpendicular to $Y$, already introduced in
Example~\ref{ex:symp}.

\begin{notation}
  Let $T_Y^\bot \subseteq f^*T_X$ be the subsheaf of sections which are
  perpendicular to $Y$, i.e., the sheaf associated to the presheaf
  $$
  U\mapsto\{ \vec A\in f^*(T_X)(U)\,\, \bigl|\, \, (f^*\omega)(\vec A,\vec
  B)=0 \text{ for all } \vec B\in T_Y(U) \}
  $$
  where $U\subset Y$ runs over the open subsets. The sheaf $T_Y^\bot$ is a
  subbundle of $f^* T_X$. Its rank equals $\codim_X Y$.
\end{notation}

We aim to show that $T_Y^\bot$ is an obstruction sheaf, in the sense of
Definition~\ref{def:obstrShf}. To start, we need to show that $T_Y^\bot
\subseteq \sF_Y$. In order to prove this, apply the bundle isomorphism
$T_X\cong \Omega^1_X$ induced by $\omega$ to both sides of the inclusion. To
prove $T_Y^\bot \subseteq \sF_Y$ we thus need to show that sections of the
kernel of $df : f^* \Omega^1_X \to \Omega^1_Y$ can locally be extended to
closed forms, defined on a neighborhood of $Y$. In complete analogy to the
proof of Proposition~\ref{prop:idliH}, this follows from a short calculation
in local coordinates which we leave to the reader.

The following proposition then shows the remaining property required for
$T_Y^\bot$ to be an obstruction sheaf.

\begin{prop}[Lie brackets of Hamiltonian vector fields]\label{prop:xkey}
  Let $U \subseteq X$ be any open subset, $Y^\circ := Y \cap U$, and let $\vec
  F, \vec G \in \sF(U)$ be two Hamiltonian vector fields on $U$ that agree
  along $Y^\circ$.  Then the Lie bracket $[\vec F,\vec G]$ is perpendicular to
  $Y$. In other words, $f^* [\vec F,\vec G] \in T_Y^\bot(Y^\circ)$.
\end{prop}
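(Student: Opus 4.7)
The plan is to use Cartan's magic formula to obtain an explicit primitive for $\iota_{[\vec F,\vec G]}\omega$, and then to observe that this primitive vanishes along $Y^\circ$, which forces its derivative in any direction tangent to $Y$ to vanish as well.

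First, I would establish the Hamiltonian nature of the bracket. By the general identity $[\Lie_{\vec F}, \iota_{\vec G}] = \iota_{[\vec F, \vec G]}$ together with the fact that $\vec G$ is Hamiltonian ($\iota_{\vec G}\omega = dg$ for some $g \in \sO_U$) and $\vec F$ is Hamiltonian ($\Lie_{\vec F}\omega = 0$), I get
$$\iota_{[\vec F,\vec G]}\omega \;=\; \Lie_{\vec F}(\iota_{\vec G}\omega) \;-\; \iota_{\vec G}(\Lie_{\vec F}\omega) \;=\; d(\Lie_{\vec F} g) \;=\; -\,d\bigl(\omega(\vec F,\vec G)\bigr),$$
using $\Lie_{\vec F} g = \iota_{\vec F}\iota_{\vec G}\omega = \omega(\vec G, \vec F)$. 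Set $h := \omega(\vec F, \vec G) \in \sO_U$. Then for any $\vec B \in T_Y(Y^\circ)$, the required identity $(f^*\omega)([\vec F,\vec G], \vec B) = 0$ reduces to showing that $\vec B(h)|_{Y^\circ} = 0$.

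Next, I would show that $h$ itself vanishes along $Y^\circ$, which makes the derivative-along-$Y$ statement automatic. Writing $\vec H := \vec G - \vec F$, the hypothesis gives $\vec H|_{Y^\circ} = 0$, and by bilinearity and antisymmetry of $\omega$,
$$h \;=\; \omega(\vec F,\vec F) + \omega(\vec F,\vec H) \;=\; \omega(\vec F,\vec H),$$
which is zero at every point of $Y^\circ$ because one of its arguments is. Since $\vec B$ is tangent to $Y^\circ$, its action on $h$ only depends on $h|_{Y^\circ}$, so $\vec B(h)|_{Y^\circ} = 0$.

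There is no serious obstacle here: once the Cartan/Lie-derivative identity is set up, the proof is two lines. The only mild subtlety is to work consistently with the local data on $U$ (i.e., using the Hamiltonian potentials $f, g$ only where they exist and restricting to $Y^\circ = Y \cap U$), and to phrase the conclusion via $f^* \omega$ and $T_Y$ rather than assuming everything is defined globally.
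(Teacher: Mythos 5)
Your proof is correct, but it takes a genuinely different route from the one in the paper. You use the Cartan identity $\Lie_{\vec F}\circ\iota_{\vec G}-\iota_{\vec G}\circ\Lie_{\vec F}=\iota_{[\vec F,\vec G]}$ to produce an explicit primitive, $\iota_{[\vec F,\vec G]}\omega=-d\bigl(\omega(\vec F,\vec G)\bigr)$ (the Poisson bracket of the local potentials), and then reduce everything to the scalar function $h=\omega(\vec F,\vec G)=\omega\bigl(\vec F,\vec G-\vec F\bigr)$, which vanishes identically along $Y^\circ$ and hence has vanishing tangential derivatives there. The paper instead avoids Cartan calculus and the potentials altogether: it tests $\omega(\cdot,[\vec F,\vec G])$ against an ambient vector field $\vec V$ tangent to $Y^\circ$, applies $\Lie_{\vec F-\vec G}$ to the function $\omega(\vec V,\vec G)$, and concludes via the Leibniz rule together with $\Lie_{\vec F-\vec G}\,\omega=0$ and $\bigl(\Lie_{\vec F-\vec G}\vec V\bigr)\big|_{Y^\circ}=0$. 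Both arguments hinge on the same two inputs --- the difference $\vec F-\vec G$ vanishes along $Y^\circ$, and (locally) Hamiltonian fields preserve $\omega$ --- but yours buys a slightly stronger, more structural conclusion ($[\vec F,\vec G]$ is Hamiltonian for a function vanishing along $Y$, not merely perpendicular to $Y$) and avoids extending tangent fields of $Y$ to the ambient manifold, while the paper's is more elementary, using nothing beyond the Leibniz rule. Two cosmetic points: your identity needs only closedness of $\iota_{\vec G}\omega$, so the local potential $g$ is dispensable; and you should avoid reusing the letter $f$ for a Hamiltonian potential, since it already denotes the inclusion $Y\to X$.
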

\begin{proof}
  The statement of Proposition~\ref{prop:xkey} being local on $Y$, it suffices
  to show that
  \begin{equation}\label{eq:pedro-x}
    \omega \left( \vec V,[\vec F, \vec G] \right)|_{Y^\circ} = 0
  \end{equation}
  for any vector field $\vec V \in H^0\bigl( U,\, T_X \bigr)$ which is tangent
  to $Y^\circ$, i.e., which satisfies
  $$
  \vec V|_y \in T_Y|_y \subseteq T_X|_y \quad \text{for all }y \in Y^\circ.
  $$
  Given any such $\vec V$, the equality $(\vec F - \vec G)|_{Y^\circ} = 0$
  implies that
  \begin{equation}\label{eq:lola-x}
    \Lie_{\vec F - \vec G}(\vec V)|_{Y^\circ} = 0.
  \end{equation}
  Equality~\eqref{eq:pedro-x} then follows with
  \begin{align*}
    0 & = \Lie_{\vec F-\vec G}\bigl(\omega(\vec V,\vec G)\bigr)|_{Y^\circ} &&
    \text{since $(\vec F-\vec G)|_{Y^\circ} = 0$}\\
    & = \omega(\Lie_{\vec F-\vec G}\vec V,\vec G)|_{Y^\circ} + \omega(\vec V,
    \Lie_{\vec F-\vec G}\vec G)|_{Y^\circ}
    && \text{since $\vec F$ and $\vec G$ preserve $\omega$} \\
    & = \omega(\vec V,[\vec F,\vec G])|_{Y^\circ} && \text{by
      \eqref{eq:lola-x}}
  \end{align*}
  This finishes the proof of Proposition~\ref{prop:xkey}.
\end{proof}

\begin{cor}[Obstruction sheaf in the symplectic setup]\label{cor:symplobst}
  The sheaf $T_Y^\bot\subset \sF_Y$ is an obstruction sheaf for the sheaf
  $\sF\subset T_X$ of Hamiltonian vector fields. \qed
\end{cor}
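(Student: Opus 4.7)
The corollary is essentially a bookkeeping statement combining material that has already been developed, so the proof plan is short. To check that $T_Y^\bot$ satisfies Definition~\ref{def:obstrShf}, two things need to be verified: first, the inclusion $T_Y^\bot \subseteq \sF_Y$, and second, the Lie-bracket property that the preimage $f^*[\vec A_1, \vec A_2]$ of any two Hamiltonian fields that agree along $Y$ lies in $T_Y^\bot$.

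For the inclusion $T_Y^\bot \subseteq \sF_Y$, I would appeal to the short discussion preceding Proposition~\ref{prop:xkey}. The plan is to transport the question via the symplectic bundle isomorphism $T_X \cong \Omega^1_X$ induced by $\omega$: under this identification $T_Y^\bot$ corresponds to the kernel of the restriction map $df : f^*\Omega^1_X \to \Omega^1_Y$, while $\sF_Y$ corresponds to the image in $f^*\Omega^1_X$ of germs of closed $1$-forms on a neighborhood of $Y$. So the task reduces to a local extension statement: a section of the kernel of $df$ can, locally on $Y$, be written as $f^*\eta$ for some closed $1$-form $\eta$ defined on an open neighborhood in $X$. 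In local coordinates $x_1,\ldots,x_n,y_1,\ldots,y_m$ with $Y = \{x_i = 0\}$, a section of the kernel has the form $\sum g_i(y)\,dx_i$; one extends it by keeping the same formula on a neighborhood and then verifies closedness by a direct computation, mimicking the construction of the function $G$ in the proof of Proposition~\ref{prop:idliH}.

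For the Lie-bracket property, there is nothing new to do: Proposition~\ref{prop:xkey} says exactly that for Hamiltonian $\vec F, \vec G$ with $f^*\vec F = f^*\vec G$ one has $f^*[\vec F, \vec G] \in T_Y^\bot(Y^\circ)$, which is precisely the condition required of an obstruction sheaf in Definition~\ref{def:obstrShf}. The main (and only) obstacle is making sure the formal identification $T_X \cong \Omega^1_X$ correctly matches the perpendicularity condition defining $T_Y^\bot$ with the ``kernel of $df$'' condition on $1$-forms, which is a routine unraveling of the definition $\eta_{\vec A} = (f^*\omega)(\vec A, \cdot)$.

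Putting these two pieces together directly yields the statement, so the proof is a one-line combination: by the local extension argument just outlined, $T_Y^\bot \subseteq \sF_Y$, and by Proposition~\ref{prop:xkey} the Lie-bracket condition of Definition~\ref{def:obstrShf} holds, so $T_Y^\bot$ is an obstruction sheaf for $\sF$.
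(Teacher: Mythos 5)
Your proposal is correct and follows the paper's own route exactly: the inclusion $T_Y^\bot \subseteq \sF_Y$ is checked by transporting it through the isomorphism $T_X \cong \Omega^1_X$ induced by $\omega$ and locally extending sections of the kernel of $df : f^*\Omega^1_X \to \Omega^1_Y$ to closed forms in analogy with Proposition~\ref{prop:idliH}, and the bracket condition is precisely Proposition~\ref{prop:xkey}. One small caution on the extension step: continuing $\sum_i g_i(y)\,dx_i$ ``by the same formula'' to a neighbourhood does \emph{not} produce a closed form in general; one must instead take the exact form $d\bigl(\sum_i g_i(y)\,x_i\bigr)$, whose restriction to $Y=\{x_1=\cdots=x_n=0\}$ recovers $\sum_i g_i(y)\,dx_i$ --- which is exactly what your reference to the construction of the function $G$ already points to.
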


\subsection{A space of deformations along $\sF$}

As a last step in the discussion of symplectic spaces, we aim to identify a
space of deformations along $\sF$. Since Hamiltonian vector fields preserve
the symplectic form by definition, the following space is a candidate.

\begin{defn}[Space of morphisms with prescribed pull-back of $\omega$]
  Consider subspace
  $$
  \Hom_\omega(Y,X) := \{ g\in \Hom(Y,X) \,|\, g^*(\omega)=f^*(\omega)
  \} \subseteq \Hom(Y,X)
  $$
  with its natural structure as a (not necessarily reduced) complex space.
\end{defn}

The following is now an elementary consequence of the fact that Hamiltonian
vector fields preserve $\omega$.

\begin{fact}[\protect{$\Hom_\omega(Y,X)$ is a space of deformations along $\sF$, cf.~\cite[Prop.~3.1.10]{Joerder10}}]\label{fact:sympldefos}
  If $n$ is any number and $f_n$ any $n$-th order infinitesimal deformation of
  $f$ that is locally induced by time-dependent Hamiltonian vector fields,
  then the corresponding morphism
  $$
  f_n : \Spec\,\bC[t]/t^{n+1} \to \Hom(Y,X)
  $$
  factors via $\Hom_\omega(Y,X)$. In particular, $\Hom_\omega(Y,X)\subseteq
  \Hom(Y,X)$ is a space of deformations along $\sF$. \qed
\end{fact}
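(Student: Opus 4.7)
The plan is to reduce the factorization property to the local identity $\varphi^*\omega = \omega$ for flows of time-dependent Hamiltonian vector fields, a standard consequence of Cartan's formula.

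First, I would translate the desired factorization into an equality of pulled-back $2$-forms. Setting $S := \Spec\,\bC[t]/t^{n+1}$ and letting $F_n \colon S \times Y \to X$ be the morphism corresponding to $f_n$, the subspace $\Hom_\omega(Y,X)$ is cut out of $\Hom(Y,X)$ by the condition $g^*\omega = f^*\omega$, so $f_n$ factors through $\Hom_\omega(Y,X)$ if and only if
\[
F_n^*\omega \;=\; p_Y^*\, f^*\omega \quad \text{in } \Omega^2_{S \times Y},
\]
where $p_Y \colon S \times Y \to Y$ is the projection. Since this is an equality of sections of a coherent sheaf on $S \times Y$, it can be verified locally on $Y$.

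Second, I would invoke the local description supplied by Definition~\ref{defn:infdefoindbyf}. By hypothesis there is an open cover $Y \subseteq \bigcup_\alpha U_\alpha$ by open subsets of $X$ and time-dependent Hamiltonian vector fields
\[
\vec A_\alpha \;=\; \sum_{i=0}^n t^i \vec A_{\alpha,i}, \qquad \vec A_{\alpha,i} \in \sF(U_\alpha),
\]
with $f_n|_{Y \cap U_\alpha} = \tau^n_{\vec A_\alpha}|_{Y \cap U_\alpha}$. By Definition~\ref{defn:tdinducedjets}, $\tau^n_{\vec A_\alpha}$ is the $n$-th order jet of the genuine holomorphic flow map $\varphi_\alpha \colon \Delta \times U'_\alpha \to U_\alpha$ of $\vec A_\alpha$, defined on a disk $\Delta \subseteq \bC$ about $0$ and a neighbourhood $U'_\alpha \subseteq U_\alpha$. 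Consequently, the required local equality on $S \times (Y \cap U_\alpha)$ is exactly the $n$-th order Taylor jet at $t = 0$ of the equality $\varphi_\alpha(t, \cdot)^*\omega = \omega$ of $2$-forms on $U'_\alpha$.

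Third, I would establish that flow identity by a direct Cartan-formula computation. For each fixed $t \in \Delta$, the vector field $(\vec A_\alpha)_t = \sum_i t^i \vec A_{\alpha,i}$ is a $\bC$-linear combination of Hamiltonian vector fields, hence itself Hamiltonian, so $\Lie_{(\vec A_\alpha)_t}\omega = 0$. Cartan's formula yields
\[
\frac{d}{dt}\bigl(\varphi_\alpha(t, \cdot)^*\omega\bigr) \;=\; \varphi_\alpha(t, \cdot)^*\, \Lie_{(\vec A_\alpha)_t}\omega \;=\; 0,
\]
and combined with the initial condition $\varphi_\alpha(0, \cdot)^*\omega = \omega$ this forces $\varphi_\alpha(t, \cdot)^*\omega \equiv \omega$ on $\Delta \times U'_\alpha$. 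Passing to $n$-th order jets at $t = 0$ and restricting to $Y \cap U_\alpha$ gives the required local identity; gluing over $\alpha$ then yields $F_n^*\omega = p_Y^*\, f^*\omega$ globally on $S \times Y$, proving the factorization.

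The only subtle point — really just careful bookkeeping — is the passage from the jet-theoretic datum $\tau^n_{\vec A_\alpha}$ to the corresponding honest flow $\varphi_\alpha$ so that the invariance of $\omega$ under $\varphi_\alpha$ translates into the desired equality of $2$-forms on $S \times Y$. Beyond this identification, the statement is simply the well-known invariance of $\omega$ under (time-dependent) Hamiltonian flow.
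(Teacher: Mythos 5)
Your argument is correct and is exactly the ``elementary consequence of the fact that Hamiltonian vector fields preserve $\omega$'' that the paper invokes without written proof (deferring to \cite[Prop.~3.1.10]{Joerder10}): realize $f_n$ locally as the $n$-jet of the genuine flow of a time-dependent Hamiltonian field and apply Cartan's formula. The one point to phrase carefully is that the condition cutting out $\Hom_\omega(Y,X)$ over the base $S=\Spec\,\bC[t]/t^{n+1}$ is an equality of \emph{relative} $2$-forms in $\Omega^2_{S\times Y/S}$ (the absolute pullback $F_n^*\omega$ acquires $dt$-components), which is precisely what your slice-wise identity $\varphi_\alpha(t,\cdot)^*\omega=\omega$ provides.
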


\subsection{Summary of results in the symplectic case}

Putting Corollaries~\ref{cor:idliH}, \ref{cor:symplobst},
Fact~\ref{fact:sympldefos} and the main results of this paper,
Theorems~\ref{thm:main} and \ref{thm:mainaddon} together, the following
corollary summarizes our results in the symplectic setting.

\begin{cor}
  Let $f:Y\to X$ be the embedding of a compact curve, surface of Kähler
  submanifold into a complex-symplectic manifold $(X,\omega)$.  Assume that
  $H^1 \bigl(Y,\, T_Y^\bot \bigr) = 0$.  Then any infinitesimal deformation
  $\sigma \in H^0\bigl(Y,\, f^*T_X \bigr)$ is effective.

  More is true: There exists a family $F:\, \Delta\times Y\to X$ of morphisms
  such that the infinitesimal deformation induced by $F$ agrees with $\sigma$,
  i.e., $\sigma_{F,0}=\sigma$, and such that $F_t^*(\omega)=f^*(\omega)$ for
  any $t\in\Delta$.  \qed
\end{cor}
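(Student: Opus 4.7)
The plan is simply to assemble the four results that the paper has already painstakingly established. The corollary combines Corollaries~\ref{cor:idliH} and \ref{cor:symplobst}, Fact~\ref{fact:sympldefos}, and Theorems~\ref{thm:main} and \ref{thm:mainaddon}, and no further computation is needed.

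First I would observe that, since $Y$ is a compact curve, surface, or Kähler submanifold, Corollary~\ref{cor:idliH} yields the identification $H^0\bigl(Y,\,\sF_Y\bigr) = H^0\bigl(Y,\,f^*T_X\bigr)$. Consequently, the given infinitesimal deformation $\sigma \in H^0\bigl(Y,\,f^*T_X\bigr)$ is automatically locally induced by the sheaf $\sF$ of Hamiltonian vector fields. This places us in Setup~\ref{setup:main} with $\sigma$ eligible for Theorem~\ref{thm:main}.

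Next, Corollary~\ref{cor:symplobst} exhibits $T_Y^\bot \subseteq \sF_Y$ as an obstruction sheaf for $\sF$ in the sense of Definition~\ref{def:obstrShf}. The hypothesis $H^1\bigl(Y,\,T_Y^\bot\bigr)=0$ is thus precisely the cohomological vanishing required by Theorem~\ref{thm:main}, so that theorem already delivers a family $F:\Delta\times Y \to X$ with $F_0=f$ and $\sigma_{F,0}=\sigma$. This gives the first assertion of the corollary.

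To obtain the stronger property $F_t^*\omega = f^*\omega$, I would invoke Theorem~\ref{thm:mainaddon} in place of Theorem~\ref{thm:main}, using the space $H := \Hom_\omega(Y,X) \subseteq \Hom(Y,X)$. By Fact~\ref{fact:sympldefos}, this $H$ is a space of deformations along $\sF$, so Theorem~\ref{thm:mainaddon} produces a family $F$ whose associated classifying morphism $\Delta \to \Hom(Y,X)$ factors through $\Hom_\omega(Y,X)$. By the very definition of $\Hom_\omega(Y,X)$, this factorization is equivalent to the identity $F_t^*\omega = f^*\omega$ for all $t \in \Delta$, completing the proof. There is no real obstacle here; the entire argument is a bookkeeping exercise, and all the analytic content has already been absorbed into the preceding sections of the paper.
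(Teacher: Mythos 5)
Your proposal is correct and follows exactly the paper's own argument: the corollary is obtained by combining Corollary~\ref{cor:idliH} (to identify all infinitesimal deformations as locally induced by Hamiltonian vector fields), Corollary~\ref{cor:symplobst} (to exhibit $T_Y^\bot$ as an obstruction sheaf), and Theorems~\ref{thm:main} and~\ref{thm:mainaddon} together with Fact~\ref{fact:sympldefos} applied to $\Hom_\omega(Y,X)$. Nothing is missing; this is precisely the bookkeeping the paper performs.
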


\subsection{Generalisations}

Many of the results contained in this section carry over to the case where $X$
is not necessarily symplectic, but carries a two-tensor $\omega \in H^0 \bigl(
X,\, \Omega^1_X \otimes \Omega^1_X \bigr)$, which need not be alternating,
symmetric, or non-degenerate. Details are found in \cite{Joerder10}.

\section{Example: Deformation along a foliation}
\label{ssec:exDefF}

This subsection is concerned with the case when $X$ carries a regular
foliation, that is, a subbundle $\sF \subseteq T_X$ closed under Lie
bracket. We fix the following setup for the present section.

\begin{setup}\label{setup:foliation}
  Let $\sF \subseteq T_X$ be a regular foliation on a complex manifold $X$,
  and let $f: Y \to X$ be the inclusion of a compact submanifold $Y \subseteq
  X$.
\end{setup}

\subsection{Infinitesimal deformations locally induced by the foliation}

Since $\sF$ is a subbundle of $T_X$, it is clear that in the context of
Setup~\ref{setup:foliation}, the sheaf of infinitesimal deformations locally
induced by $\sF$ is $\sF_Y = f^*\sF$. The space of infinitesimal deformations
locally induced by $\sF$ is then $H^0 \bigl( Y,\, f^* \sF \bigr)$.

\subsection{Obstruction sheaf in the foliated setup}

Of course, the restricted subbundle $\sF_Y\subset f^*(T_X)$ is an obstruction
sheaf for the foliation $\sF$. If the leaves intersect the submanifold $f(Y)$
transversely, then $\sF_Y$ is in fact the only possible obstruction
sheaf. However, if the set
\begin{equation}\label{eq:defT}
  T := \{ y \in Y \,|\, \sF|_y \subseteq T_Y|_y \}
\end{equation}
of points where the foliation is tangent to $Y$ is non-empty, then the
following proposition asserts that the proper subsheaf $\sG:= \sJ_T \otimes
\sF_Y \subset \sF_Y$ is an obstruction sheaf as well.

\begin{prop}\label{cor:folspace}
  In Setup~\ref{setup:foliation}, let $U \subset X$ be any open subset, set
  $Y^\circ := Y \cap U$ and $T^\circ := T \cap U$. If $\vec A, \vec B \in
  \sF(U)$ are two vector fields in $\sF$ that agree along $Y^\circ$, then the
  Lie bracket vanishes along $T^\circ$,
  $$
  f^* \bigl( [\vec A, \vec B] \bigr)|_{T^\circ} = 0 \in \sF_Y
  (T^\circ).
  $$
\end{prop}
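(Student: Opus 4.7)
The statement is local on $Y$, so I would work in local holomorphic coordinates $(x_1,\dots,x_n, y_1,\dots,y_m)$ on (a shrinking of) $U$ adapted to the submanifold, that is, chosen so that $Y^\circ = \{x_1 = \dots = x_n = 0\}$. Since $\vec A$ and $\vec B$ are sections of $\sF \subseteq T_X$ that agree along $Y^\circ$, the difference $\vec C := \vec A - \vec B$ is a holomorphic vector field on $U$ vanishing on $Y^\circ$, so by standard division one can write
$$
\vec C = \sum_{i=1}^n x_i\, \vec V_i
$$
for suitable holomorphic vector fields $\vec V_i$ on $U$. Note that I make no claim that $\vec V_i$ lies in $\sF$; only the $\mathbb C_X$-module structure of $T_X$ is needed here.

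Next I would reduce the Lie bracket to a bracket involving $\vec C$, using bilinearity:
$$
[\vec A, \vec B] = [\vec A,\, \vec A - \vec C] = -[\vec A, \vec C] = -\sum_{i=1}^n \vec A(x_i)\cdot \vec V_i - \sum_{i=1}^n x_i\cdot [\vec A, \vec V_i].
$$
On $Y^\circ$ the second sum vanishes because every $x_i$ does, so the restriction simplifies to
$$
[\vec A, \vec B]\big|_{Y^\circ} = -\sum_{i=1}^n \vec A(x_i)\big|_{Y^\circ}\cdot \vec V_i\big|_{Y^\circ}.
$$

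Finally I would use the defining property of $T^\circ$. At any point $y \in T^\circ$ the tangency assumption $\sF|_y \subseteq T_Y|_y$ gives $\vec A|_y \in T_Y|_y$, and since $x_1,\dots,x_n$ are local defining functions for $Y$, every tangent vector to $Y$ at $y$ annihilates them. Hence $\vec A(x_i)(y) = 0$ for $1 \le i \le n$, and the displayed formula forces $[\vec A, \vec B](y) = 0$. This is exactly the vanishing of $f^*[\vec A, \vec B]$ at $y \in T^\circ$, which is what is claimed.

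There is no real obstacle here; the only subtle point worth verifying carefully is the division step producing the $\vec V_i$, which is the standard fact that a vector field on $U$ restricting to zero on the smooth subvariety $Y^\circ$ lies in the ideal sheaf $\sJ_{Y^\circ}\cdot T_X$. Once that is in place, the proof is a two-line computation followed by the observation that tangency of $\vec A$ to $Y$ at $y$ kills exactly the non-$\sJ_T$-part of the bracket.
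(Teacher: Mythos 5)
Your proof is correct and is essentially the paper's own argument: both reduce $[\vec A,\vec B]$ to a bracket with the difference $\vec A-\vec B$, which vanishes along $Y^\circ$, and then use the tangency $\sF|_y\subseteq T_Y|_y$ at points $y\in T^\circ$ to kill the one surviving term. The only cosmetic difference is that you factor the difference through the ideal of $Y^\circ$ in adapted coordinates, whereas the paper argues coordinate-free by applying the bracket as a derivation to a test function $g$ and noting that $(\vec B-\vec A).g$ vanishes along $Y^\circ$.
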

\begin{proof}
  We consider vector fields as derivations acting on the structure sheaf
  $\sO_X$. From this point of view, we need to show $\bigl( [\vec A,\vec
  B].g\bigr) (x)=0$ for any point $x \in T^\circ$ and any germ of function $g
  \in \sO_{X,x}$. We know that
  $$
  [\vec A, \vec B].g =[\vec A,\vec B-\vec A].g = \vec
  A.\bigl(\underbrace{(\vec B-\vec A).g}_{=: a} \bigr) - \underbrace{(\vec
    B-\vec A).\bigl(\vec A.g \bigr)}_{=: b},
  $$
  The terms $a$ and $b$ vanish along $Y^\circ$ because $\vec A|_{Y^\circ} =
  \vec B|_{Y^\circ}$. Since $\vec A \in \sF$ is tangent to $Y$ at all points
  $x \in T^\circ$, the assertion follows.
\end{proof}

\begin{cor}[Obstruction sheaf in the case of a foliation]\label{cor:folobst}
  In Setup~\ref{setup:foliation}, any sheaf $\sG$ satisfying
  $$
  \sJ_T \otimes \sF|_Y \subseteq \sG \subseteq \sF|_Y
  $$
  is an obstruction sheaf for the foliation $\sF$. \qed
\end{cor}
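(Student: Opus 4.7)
My plan is to verify the two defining properties of an obstruction sheaf listed in Definition~\ref{def:obstrShf} for any sheaf $\sG$ sandwiched as in the statement. The inclusion $\sG \subseteq \sF_Y$ is immediate: since $\sF$ is a subbundle of $T_X$ in Setup~\ref{setup:foliation}, the natural map $f^{-1}\sF \to f^*T_X$ identifies $\sF_Y$ with $\sF|_Y = f^*\sF$, so the hypothesis $\sG \subseteq \sF|_Y$ is precisely $\sG \subseteq \sF_Y$. Hence the only substantive task is to control the Lie-bracket of two vector fields in $\sF$ that coincide on $Y$.

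To that end, I take any open $U \subseteq X$ together with vector fields $\vec A_1, \vec A_2 \in \sF(U)$ satisfying $f^* \vec A_1 = f^* \vec A_2$, i.e., $\vec A_1|_{Y^\circ} = \vec A_2|_{Y^\circ}$ where $Y^\circ := Y \cap U$. Proposition~\ref{cor:folspace} directly asserts that
\[
f^* [\vec A_1, \vec A_2] \big|_{T^\circ} = 0 \in \sF_Y(T^\circ),
\]
where $T^\circ := T \cap U$. What I need is to upgrade this pointwise vanishing along $T^\circ$ to membership in $\sJ_T \otimes \sF_Y$.

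This upgrade is the only place where care is needed, so let me flag it as the step to justify carefully. Since $\sF$ is a holomorphic subbundle of $T_X$ on $X$, its pullback $\sF_Y = f^*\sF$ is a locally free $\sO_Y$-module. For a locally free sheaf $\sE$ on $Y$, the subsheaf of sections vanishing along the closed analytic subset $T \subseteq Y$ equals $\sJ_T \cdot \sE = \sJ_T \otimes_{\sO_Y} \sE$; this follows at once from local trivialisations of $\sE$, which reduce the claim to the elementary statement $\{g \in \sO_Y \mid g|_T = 0\} = \sJ_T$. Applying this to $\sE = \sF_Y$ yields
\[
f^* [\vec A_1, \vec A_2] \in \bigl(\sJ_T \otimes \sF_Y\bigr)(Y^\circ).
\]

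Finally, the hypothesis $\sJ_T \otimes \sF|_Y \subseteq \sG$ gives $f^* [\vec A_1, \vec A_2] \in \sG(Y^\circ) = \sG\bigl(f^{-1}(U)\bigr)$, which is precisely what Definition~\ref{def:obstrShf} demands. So the corollary is a direct combination of Proposition~\ref{cor:folspace} with the standard description of the ideal-twist of a locally free sheaf; no further computation beyond this short assembly is required.
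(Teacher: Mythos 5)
Your proof is correct and follows the same route the paper intends: the corollary is stated as an immediate consequence of Proposition~\ref{cor:folspace}, and you supply exactly the two routine ingredients left implicit there, namely the identification $\sF_Y = f^*\sF$ for the subbundle $\sF$ and the fact that, for the locally free sheaf $\sF_Y$, sections vanishing along $T$ form the subsheaf $\sJ_T \otimes \sF_Y$. No gaps.
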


\subsection{A space of deformations along $\sF$}

A space of deformations along $\sF$ has been constructed in \cite{KKL10}. The
following notation is useful in the description of its main property.

\begin{notation}[\protect{Velocity vector field for families of morphisms, cf.~\cite[Sect.~1.B]{KKL10}}]\label{not:infdef}
  Let $F : \Delta \times Y \to X$ be a family of morphisms such that $F_0 =
  f$. Given a point $y \in Y$, we can consider the curve
  $$
  F_y : \Delta \to X, \quad t \mapsto F(t,y).
  $$
  Given $t_0 \in \Delta$ and taking derivatives in $t$ for all $y$ at time
  $t=t_0$, this gives a section
  $$
  \sigma_{F, t_0} \in H^0 \bigl( Y,\, (F_{t_0})^* T_X \bigr),
  $$
  called \emph{velocity vector field at time $t_0$}.
\end{notation}

\begin{fact}[\protect{Space of deformations along a foliation, cf.~proof of~\cite[Cor.~5.6]{KKL10}}]\label{fact:folspace}
  In Setup~\ref{setup:foliation}, there exists a space of deformations along
  $\sF$, denoted
  $$
  \Hom_\sF(Y,X) \subseteq \Hom(Y,X),
  $$
  with the following additional property.  If $F: \Delta \to \Hom_\sF(Y,X)$ is
  any holomorphic curve germ, with associated family of morphisms $F: \Delta
  \times Y \to X$, then the velocity vector fields are in the pull-back of the
  foliation $\sF$, for all times $t \in \Delta$. In other words, $\sigma_{F,t}
  \in H^0\bigl(Y ,\, F_t^* \sF \bigr)$ for all $t \in \Delta$.  \qed
\end{fact}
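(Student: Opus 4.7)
The plan is to construct $\Hom_\sF(Y,X)$ by cutting out $\Hom(Y,X)$ locally near $[f]$ with equations coming from transverse coordinates to the foliation, then verify the two required properties by an explicit flow computation.

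Since $\sF \subseteq T_X$ is a \emph{regular} foliation of rank $r$, every point $x \in f(Y)$ admits a distinguished chart $(U_\alpha,\, x^\alpha_1,\ldots,x^\alpha_r,\, y^\alpha_1,\ldots,y^\alpha_{n-r})$ in which $\sF|_{U_\alpha} = \langle \partial_{x^\alpha_1},\ldots,\partial_{x^\alpha_r}\rangle$. In such a chart the transverse coordinates $y^\alpha_j$ are first integrals, i.e.\ $\vec A . y^\alpha_j = 0$ for every $\vec A \in \sF(U_\alpha)$. First I would cover $f(Y)$ by finitely many such charts $\{U_\alpha\}_{\alpha \in A}$, set $Y_\alpha := f^{-1}(U_\alpha) \subseteq Y$, and define $\Hom_\sF(Y,X) \subseteq \Hom(Y,X)$ near $[f]$ by the equations
\[
 y^\alpha_j \circ g|_{Y_\alpha} \;=\; y^\alpha_j \circ f|_{Y_\alpha}, \qquad \alpha \in A,\quad j = 1,\ldots,n-r,
\]
imposed on the open subset of $\Hom(Y,X)$ where $g(Y_\alpha) \subseteq U_\alpha$ for all $\alpha$ (an open condition near $[f]$). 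Since transition maps between distinguished charts send transverse coordinates to functions of transverse coordinates only, these local equations patch together, and the resulting locally closed analytic subspace is independent of the chosen distinguished cover.

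For the first property (that $\Hom_\sF(Y,X)$ is a space of deformations along $\sF$), suppose $f_n$ is an $n$-th order deformation that is locally induced by time-dependent vector fields $\vec A_a = \sum_{i=0}^n t^i \vec A_{a,i}$ with $\vec A_{a,i} \in \sF(U_a)$. By refining if necessary, I may assume the cover $\{U_a\}$ is a refinement of $\{U_\alpha\}$. Since every $\vec A_{a,i}$ is a section of $\sF$, the associated constant-flow vector field $D(\vec A_a)$ annihilates each $y^\alpha_j$, so its flow preserves the transverse coordinates. Consequently the induced jet $\tau^n_{\vec A_a}$ satisfies $y^\alpha_j \circ \tau^n_{\vec A_a} \equiv y^\alpha_j \circ f$ modulo $t^{n+1}$, showing that the morphism $\Spec\,\bC[t]/t^{n+1} \to \Hom(Y,X)$ associated with $f_n$ factors through $\Hom_\sF(Y,X)$.

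For the second property, let $F : \Delta \to \Hom_\sF(Y,X)$ be a holomorphic curve germ with associated family $F : \Delta \times Y \to X$, and fix $t_0 \in \Delta$, $y_0 \in Y$, and a distinguished chart $U_\alpha$ containing $F_{t_0}(y_0)$. By the defining equations of $\Hom_\sF(Y,X)$, the function $t \mapsto y^\alpha_j(F_t(y_0))$ is locally constant in $t$, so its derivative at $t_0$ vanishes for every $j$. Equivalently $\sigma_{F,t_0}(y_0) . y^\alpha_j = 0$, and since $\sF|_{U_\alpha}$ is cut out inside $T_X|_{U_\alpha}$ precisely by the forms $dy^\alpha_j$, we conclude $\sigma_{F,t_0}(y_0) \in F_{t_0}^* \sF|_{y_0}$. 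As this holds for all $y_0 \in Y$ and all $t_0 \in \Delta$, the claim follows.

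The main obstacle is the intrinsic well-definedness of $\Hom_\sF(Y,X)$: one must check that the defining conditions in overlapping distinguished charts are compatible and that the resulting subspace is canonical. This reduces to the observation that holonomy preserves transverse coordinates up to a diffeomorphism of the local transversal, so that one system of transverse first integrals can be expressed in terms of another. Once this is granted, both functorial properties follow cleanly from the fact that $\sF = \ker(dy^\alpha_1 \oplus \cdots \oplus dy^\alpha_{n-r})$ in each distinguished chart.
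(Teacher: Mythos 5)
Your construction of $\Hom_\sF(Y,X)$ via local transverse first integrals in distinguished charts is exactly the construction used in the proof of \cite[Cor.~5.6]{KKL10} to which the paper delegates, and your flow computation (each $\vec A_t$ annihilates the $y^\alpha_j$, hence so does the whole jet) is precisely the ``minimal change'' the paper's subsequent remark says is needed to pass from time-independent to time-dependent vector fields; the argument is correct and the same in substance. Two standard points deserve the usual care: with $Y_\alpha = f^{-1}(U_\alpha)$ the condition $g(Y_\alpha)\subseteq U_\alpha$ is \emph{not} open in $\Hom(Y,X)$ (one should shrink to open sets $V_\alpha$ with $\overline{V_\alpha}\subset U_\alpha$ still covering $f(Y)$ and impose the equations on $f^{-1}(V_\alpha)$), and the fact that the infinitely many equations $y^\alpha_j\circ g = y^\alpha_j\circ f$ cut out a genuine (possibly non-reduced) analytic subspace of the Douady space uses compactness of $Y$, via coherence of the pushforward of the ideal these differences generate on the universal family.
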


The actual statement of~\cite[Cor.~5.6]{KKL10} only implies that infinitesimal
deformations induced by \emph{time-independent} vector fields in $\sF$ factor
through $Hom_\sF(Y,X)$. However, the proof in \emph{loc.~cit.{}} can be
generalized with minimal changes to the case of \emph{time-dependent} vector
fields, as required in Definition~\ref{defn:spaceofdefo}.

\subsection{Summary of results in the case of a foliated manifold}

Using Corollary~\ref{cor:folobst} and Fact~\ref{fact:folspace}, the following
corollary summarizes our results in the case of a foliated manifolds.

\begin{cor}\label{cor:dalf}
  In Setup~\ref{setup:foliation}, let $\sG$ be any subsheaf satisfying
  $$
  \sJ_T \otimes \sF_Y \subseteq \sG \subseteq \sF|_Y,
  $$
  where $T \subseteq Y$ is the space defined in \eqref{eq:defT} above.  Assume
  that $H^1 \bigl(Y,\,\sG \bigr) = 0$.
  
  Then any infinitesimal deformation $\sigma \in H^0 \bigl(Y,\, \sF|_Y \bigr)$
  is effective. More is true: there exists a family $F:\Delta\times Y\to X$ of
  morphisms such that $F_0=f$, $\sigma_{F,0}=\sigma$ and, such that the
  velocity vector fields $\sigma_{F,t}$ are contained in $H^0\bigl(Y ,\, F_t^*
  \sF \bigr)$, for all $t \in \Delta$. \qed
\end{cor}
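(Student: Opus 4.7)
The plan is to assemble Corollary~\ref{cor:dalf} as a direct consequence of the main theorems, once the hypotheses of those theorems have been verified in the foliated setting. There is no new work to do beyond carefully matching the assumptions; the substantive content lies in the earlier results of Section~\ref{ssec:exDefF}.

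First I would verify that $\sigma \in H^0(Y, \sF|_Y)$ is an infinitesimal deformation of $f$ that is locally induced by $\sF$ in the sense of Definition~\ref{del:sidliF}. Since $\sF \subseteq T_X$ is a subbundle, we have $\sF_Y = \Image(f^{-1}\sF \to f^*T_X) = f^*\sF = \sF|_Y$, so the hypothesis $\sigma \in H^0(Y, \sF|_Y)$ is literally the statement that $\sigma \in H^0(Y, \sF_Y)$. Next, Corollary~\ref{cor:folobst} asserts that any $\sG$ sandwiched as in the statement is an obstruction sheaf for $\sF$ in the sense of Definition~\ref{def:obstrShf}. Combined with the cohomology vanishing hypothesis $H^1(Y,\sG)=0$, this places us exactly in the framework of Theorem~\ref{thm:main}, which produces a family $F: \Delta \times Y \to X$ satisfying $F_0 = f$ and $\sigma_{F,0} = \sigma$.

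To upgrade the family so that all velocity vector fields lie in $F_t^*\sF$, I would invoke Theorem~\ref{thm:mainaddon} using the space of deformations along $\sF$ provided by Fact~\ref{fact:folspace}, namely $H := \Hom_\sF(Y,X) \subseteq \Hom(Y,X)$. Theorem~\ref{thm:mainaddon} then yields a family $F$ with all the properties above, and with the additional feature that the associated morphism $\Delta \to \Hom(Y,X)$ factors through $\Hom_\sF(Y,X)$.

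Finally, the extra property recorded in Fact~\ref{fact:folspace} directly supplies the conclusion on velocity vector fields: since $F: \Delta \to \Hom_\sF(Y,X)$, one has $\sigma_{F,t} \in H^0(Y, F_t^*\sF)$ for every $t \in \Delta$. This completes the proof. The only conceivable obstacle is the mild one of confirming that the version of $\Hom_\sF(Y,X)$ from \cite{KKL10} genuinely qualifies as a space of deformations along $\sF$ in the sense of Definition~\ref{defn:spaceofdefo} for \emph{time-dependent} vector fields in $\sF$; this is precisely the point addressed in the remark following Fact~\ref{fact:folspace}, so no additional argument is required here.
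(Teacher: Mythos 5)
Your proposal is correct and matches the paper's own argument: the corollary is stated with a \qed precisely because it is the direct assembly of Corollary~\ref{cor:folobst} (to get the obstruction sheaf), Theorems~\ref{thm:main} and~\ref{thm:mainaddon} (using $H^1(Y,\sG)=0$ and $\sF_Y = f^*\sF$), and Fact~\ref{fact:folspace} (to identify $\Hom_\sF(Y,X)$ as the space of deformations along $\sF$ and read off the velocity-vector-field property). Nothing further is needed.
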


\begin{rem}
  It might be worth noting that to obtain the conclusions of
  Corollary~\ref{cor:dalf}, it suffices to prove vanishing $H^1 \bigl(Y,\,\sG
  \bigr) = 0$ for a single obstruction sheaf $\sG$.  Since the obstruction
  sheaf is often not uniquely defined, this gives extra leeway which might be
  useful in applications.
\end{rem}

\providecommand{\bysame}{\leavevmode\hbox to3em{\hrulefill}\thinspace}
\providecommand{\MR}{\relax\ifhmode\unskip\space\fi MR}
\providecommand{\MRhref}[2]{%
  \href{http://www.ams.org/mathscinet-getitem?mr=#1}{#2}
}
\providecommand{\href}[2]{#2}


\begin{thebibliography}{BHPVdV04}

\bibitem[Art68]{ARTIN68}
{\sc M.~Artin}: \emph{On the solutions of analytic equations}, Invent. Math.
  \textbf{5} (1968), 277--291. {\sf\scriptsize MR0232018 (38 \#344)}

\bibitem[BHPVdV04]{HBPV}
{\sc W.~P. Barth, K.~Hulek, C.~A.~M. Peters, and A.~Van~de Ven}: \emph{Compact
  complex surfaces}, second ed., Ergebnisse der Mathematik und ihrer
  Grenzgebiete. 3. Folge. A Series of Modern Surveys in Mathematics [Results in
  Mathematics and Related Areas. 3rd Series. A Series of Modern Surveys in
  Mathematics], vol.~4, Springer-Verlag, Berlin, 2004. {\sf\scriptsize
  MR2030225 (2004m:14070)}

\bibitem[CP94]{MR1326625}
{\sc F.~Campana and T.~Peternell}: \emph{Cycle spaces}, Several complex
  variables, {VII}, Encyclopaedia Math. Sci., vol.~74, Springer, Berlin, 1994,
  pp.~319--349. {\sf\scriptsize 1326625}

\bibitem[Hor73]{MR0322209}
{\sc E.~Horikawa}: \emph{On deformations of holomorphic maps. {I}}, J. Math.
  Soc. Japan \textbf{25} (1973), 372--396. {\sf\scriptsize MR0322209 (48
  \#571)}

\bibitem[Jö10]{Joerder10}
{\sc C.~Jörder}: \emph{Deformation of morphisms}, Diploma thesis, University
  of Freiburg, 2010, URN: urn:nbn:de:bsz:25-opus-78978. Available on the
  internet at
  \href{http://www.freidok.uni-freiburg.de/volltexte/7897}{http://www.freidok.%
uni-freiburg.de/volltexte/7897}.

\bibitem[Kaw92]{MR1144434}
{\sc Y.~Kawamata}: \emph{Unobstructed deformations. {A} remark on a paper of
  {Z}. {R}an: ``{D}eformations of manifolds with torsion or negative canonical
  bundle'' [{J}. {A}lgebraic {G}eom.\ {\bf 1} (1992), no.\ 2, 279--291;
  {MR}1144440 (93e:14015)]}, J. Algebraic Geom. \textbf{1} (1992), no.~2,
  183--190. {\sf\scriptsize 1144434 (93e:14016)}

\bibitem[Kaw97]{MR1487238}
{\sc Y.~Kawamata}: \emph{Erratum on: ``{U}nobstructed deformations. {A} remark
  on a paper of {Z}. {R}an: `{D}eformations of manifolds with torsion or
  negative canonical bundle'\,'' [{J}. {A}lgebraic {G}eom.\ {\bf 1} (1992),
  no.\ 2, 183--190; {MR}1144434 (93e:14016)]}, J. Algebraic Geom. \textbf{6}
  (1997), no.~4, 803--804. {\sf\scriptsize 1487238 (98k:14009)}

\bibitem[KKL10]{KKL10}
{\sc S.~Kebekus, S.~Kousidis, and D.~Lohmann}: \emph{Deformations along
  subsheaves}, L'Enseignement Mathématique (2) \textbf{56} (2010), no.~3--4,
  287--313.

\bibitem[Ran92]{RAN92MR1487238MR1487238MR1487238}
{\sc Z.~Ran}: \emph{Lifting of cohomology and unobstructedness of certain
  holomorphic maps}, Bull. Amer. Math. Soc. (N.S.) \textbf{26} (1992), no.~1,
  113--117. {\sf\scriptsize 1102754 (92h:32035)}

\bibitem[Voi92]{voisin-lagrangian}
{\sc C.~Voisin}: \emph{Sur la stabilit\'e des sous-vari\'et\'es lagrangiennes
  des vari\'et\'es symplectiques holomorphes}, Complex projective geometry
  ({T}rieste, 1989/{B}ergen, 1989), London Math. Soc. Lecture Note Ser., vol.
  179, Cambridge Univ. Press, Cambridge, 1992, pp.~294--303. {\sf\scriptsize
  1201391 (94b:32029)}

\bibitem[Voi07]{Voisin-Hodge1}
{\sc C.~Voisin}: \emph{Hodge theory and complex algebraic geometry. {I}},
  english ed., Cambridge Studies in Advanced Mathematics, vol.~76, Cambridge
  University Press, Cambridge, 2007, Translated from the French by Leila
  Schneps. {\sf\scriptsize MR2451566 (2009j:32014)}

\end{thebibliography}
\end{document}